\newtheorem{theorem}{Theorem}[section]
\newtheorem{lemma}[theorem]{Lemma}
\theoremstyle{definition}
\newtheorem{definition}[theorem]{Definition}
\theoremstyle{remark}
\newtheorem{remark}[theorem]{Remark}
\theoremstyle{proposition}
\newtheorem{proposition}[theorem]{Proposition}
\theoremstyle{corollary}
\newtheorem{corollary}[theorem]{Corollary}
\numberwithin{equation}{section}
\newcommand{\abs}[1]{\left\vert #1 \right\vert}
\newcommand{\norm}[1]{\left\| #1 \right\|}
\newcommand{\lefttriplenorm}{\ensuremath{\left| \! \left| \! \left|}}
\newcommand{\righttriplenorm}{\ensuremath{\right| \! \right| \! \right|}}
\newcommand{\triplenorm}[1]{\lefttriplenorm #1 \righttriplenorm}
\newcommand{\skp}[1]{\left< #1 \right>}
\begin{document}

\title[$\mathcal{H}$-matrix approximation to the inverses of BEM matrices]
{Existence of $\mathcal{H}$-matrix approximants to the inverses of BEM matrices: 
the simple-layer operator}


\author{Markus Faustmann}
\address{Institute for Analysis and Scientific Computing (Inst. E 101),
Vienna University of Technology,
Wiedner Hauptstraße 8-10,
1040 Wien, Austria }
\email{markus.faustmann\@ tuwien.ac.at}

\author{Jens Markus Melenk}
\address{Institute for Analysis and Scientific Computing (Inst. E 101),
Vienna University of Technology,
Wiedner Hauptstraße 8-10,
1040 Wien, Austria }
\email{melenk\@ tuwien.ac.at}

\author{Dirk Praetorius}
\address{Institute for Analysis and Scientific Computing (Inst. E 101),
Vienna University of Technology,
Wiedner Hauptstraße 8-10,
1040 Wien, Austria }
\email{dirk.praetorius\@ tuwien.ac.at}

\subjclass[2010]{Primary 65F05; Secondary 65N38, 65F30, 65F50}

\date{}

\dedicatory{Dedicated to Wolfgang Hackbusch on the occasion of his 65th birthday}

\begin{abstract}
We consider the question of approximating 
the inverse $\mathbf W = \mathbf V^{-1}$ of the Galerkin stiffness
matrix $\mathbf V$ obtained by discretizing the simple-layer operator $V$ 
with piecewise constant functions. The block partitioning of $\mathbf W$ 
is assumed to satisfy any one of several standard admissibility criteria that 
are employed in connection with clustering algorithms to approximate 
the discrete BEM operator $\mathbf V$. We show that $\mathbf W$ can be approximated
by blockwise low-rank matrices such that the error  decays exponentially
in the block rank employed. Similar exponential approximability results are 
shown for the Cholesky factorization 
of $\mathbf V$.
\end{abstract}

\maketitle

\section{Introduction}
The system matrices arising in the boundary element method (BEM) such as the 
matrix $\mathbf V$ for the classical simple-layer operator $V$ are fully populated.
The classical BEM is therefore often deemed inefficient with respect to memory 
requirements and, in turn, fast iterative solvers. 
These reservations can be met with various compression techniques 
that have been developed in the past to 
store the BEM matrices and realize the matrix-vector multiplication 
with log-linear (or even linear) complexity. We mention here 
multipole expansions, \cite{rokhlin85,greengard-rokhlin97}, 
panel clustering, \cite{hackbusch-nowak88,hackbusch-nowak89,hackbusch-sauter93,sauter92}, 
wavelet compression techniques, 
\cite{rathsfeld98,rathsfeld01,schneider98,petersdorff-schwab-schneider97,tausch03,tausch-white03},
the mosaic-skeleton method, \cite{tyrtyshnikov00}, the adaptive cross approximation (ACA)
method, \cite{bebendorf00}, and the hybrid cross approximation (HCA), \cite{boerm-grasedyck05}. 
Many of these data-sparse methods can be understood as specific instances of 
${\mathcal H}$-matrices, which were introduced and analyzed
in \cite{Hackbusch99,GrasedyckHackbusch,GrasedyckDissertation,HackbuschBuch} 
as blockwise low rank matrices.
$\mathcal{H}$-matrices come with the additional feature that they permit 
an (approximate) arithmetic with log-linear complexity. In particular, this arithmetic 
includes the (approximate) inversion of matrices. Thus, the 
${\mathcal H}$-matrix arithmetic can provide an approximation to the inverse, 
the $LU$ factorization, or the Cholesky decomposition. However, the 
accuracy of this approximate inverse or factorization depends 
on various parameters including the rank of the blocks and is,
for the matrices arising from BEM, mathematically not fully understood. 
In the present paper we show that for a block structure typically
employed in the context of ${\mathcal H}$-matrices,  the inverse ${\mathbf W} = \mathbf{V}^{-1}$ of 
the discretization of the simple-layer operator $V$ can be approximated from 
the set of blockwise rank-$r$ matrices at an exponential rate in the block rank. 
While this result does not fully analyze the accuracy of the 
${\mathcal H}$-matrix inversion algorithms, it shows that inversion algorithms within
the ${\mathcal H}$-matrix framework could work. It thus
gives some mathematical underpinning to the success of the 
${\mathcal H}$-matrix calculus when employed to compute (approximate) 
inverses of BEM-matrices, which is observed numerically, for example,  
by Bebendorf~\cite{Bebendorf05} and Grasedyck~\cite{GrasedyckDissertation,Grasedyck05}. 

Quickly after the introduction of the $\mathcal{H}$-matrix arithmetic, also 
$\mathcal{H}$-factorizations mimicking the classical $LU$- and Cholesky decompositions
were proposed, \cite{Lintner,Bebendorf05}. Again, 
numerical evidence indicates their great usefulness for example, for
black box preconditioning in iterative solvers,
\cite{Bebendorf05,Grasedyck05,Grasedyck08,leborne-grasedyck06,grasedyck-kriemann-leborne08}.

The class of ${\mathcal H}$-matrices is not the only one for which 
inversion and factorizations of system matrices arising in the 
discretization of differential and integral operators have been devised. 
Closely related to the concept of ${\mathcal H}$-matrices and its arithmetic are 
``hierarchically semiseparable matrices'', 
\cite{xia13,xia-chandrasekaran-gu-li09,li-gu-wu-xia12} and the idea of 
``recursive skeletonization'', 
\cite{ho-greengard12,greengard-gueyffier-martinsson-rokhlin09,ho-ying13};
for discretizations of PDEs, we mention \cite{ho-ying13,gillman-martinsson13,schmitz-ying12,martinsson09}, and  
particular applications to boundary integral 
equations are \cite{martinsson-rokhlin05,corona-martinsson-zorin13,ho-ying13a}. 
These factorization algorithms aim to exploit that some off-diagonal blocks of 
certain Schur complements are low rank. 
Following 
\cite{Bebendorf07,GrasedyckKriemannLeBorne,chandrasekaran-dewilde-gu-somasuderam10}, we rigorously establish 
that the off-diagonal blocks of certain Schur complements 
can be approximated by low-rank matrices. We exploit this fact to show that 
the Cholesky decomposition of ${\mathbf V}$ can be approximated at an 
exponential rate in the block rank in the ${\mathcal H}$-matrix format. 

Hitherto, the mathematical analysis of approximability of the inverse
of system matrices in the ${\mathcal H}$-matrix format has focused 
on the setting of the finite element method (FEM). The first result 
in this direction is due to \cite{BebendorfHackbusch}.
Generalizations to elliptic systems \cite{schreittmiller06} and 
approximations in the framework of $\mathcal{H}^2$-matrices \cite{Boerm,BoermBuch} are also analyzed.
Our recent works \cite{FaustmannMelenkPraetorius,FMP13}
differ from the above mentioned references for ${\mathcal H}$-matrices 
for FEM-matrices in several ways. Among the 
differences, we highlight that, as in present paper,  
\cite{FaustmannMelenkPraetorius,FMP13} work in a fully discrete setting in contrast
to the earlier technique of approximating on the continuous level and then 
projecting into discrete spaces. This technique avoids the projection error
associated with the transition from the continuous level to the discrete one, 
and leads to exponential convergence in the block rank. 

In the present paper, we focus on the lowest-order discretization of 
the simple-layer operator associated with the Laplace operator. 
However, our arguments are based on rather general properties of elliptic 
operators so that similar assertions can be shown
for higher order discretizations and the hypersingular integral equation,
which can be found in the forthcoming work \cite{FMP14}. Moreover,
we expect that our approach can cover the case of elliptic systems amenable to 
a treatment with the BEM such as the Lam\'e system. 

The paper is structured as follows. In Section~\ref{sec:main-results}, 
we present the main results for ${\mathcal H}$-matrices. Mathematically, 
the core of the paper is Section~\ref{sec:Approximation-solution}, where we investigate
the question of how well solutions of the discrete system can be approximated
locally from low dimensional spaces. These results are transferred to the matrix
level in Sections~\ref{sec:H-matrix-approximation} and \ref{sec:LU-decomposition}  
to show the approximability result for ${\mathbf V}^{-1}$ and for Cholesky decompositions, 
respectively. 
Section~\ref{sec:extensions} is concerned with various extensions of our 
approximation result: 
We show a similar compression result for the 
Poincar\'e-Steklov operator and we show that ${\mathbf V}^{-1}$ can be
approximated at an exponential rate in the format of 
${\mathcal H}^2$-matrices, \cite{HackbuschKhoromskijSauter,Boerm,BoermBuch}. 
\medskip

\section{Main Result}
\label{sec:main-results}

Let $\Omega \subset \mathbb{R}^{d}$, $d \in \{2,3\}$, be a bounded Lipschitz domain such that $\Gamma:=\partial \Omega$ 
is polygonal (for $d = 2$) or polyhedral (for $d = 3$).
We consider the simple-layer operator $V \in L(H^{-1/2}(\Gamma),H^{1/2}(\Gamma))$ associated with the Laplacian
 given by 
\begin{equation*}
V\phi(x) = \int_{\Gamma} G(x-y)\phi(y)ds_y, \quad x \in \Gamma,
\end{equation*}
where $G(x) = -\frac{1}{2\pi} \log\abs{x}$ for $d=2$ and $G(x) = \frac{1}{4\pi}\frac{1}{\abs{x}}$ for $d=3$ 
is the fundamental solution of the Laplacian. 
The simple-layer operator is an elliptic isomorphism for $d=3$ and for $d=2$ provided $\operatorname*{diam}(\Omega) < 1$, 
which can be assumed by scaling. We refer the reader to the monographs
\cite{mclean00,hsiao-wendland08,SauterSchwab,Steinbach} for a detailed 
discussion of the pertinent properties of boundary integral operators such as 
the simple-layer operator studied here. 

We assume that $\Gamma$ is triangulated by a {\it quasiuniform} mesh ${\mathcal T}_h=\{T_1,\dots,T_N\}$ of 
mesh width $h := \max_{T_j\in \mathcal{T}_h}{\rm diam}(T_j)$. 
The elements $T_j \in \mathcal{T}_h$ are open line segments ($d=2$) or triangles ($d=3$). 
Additionally, we assume that the mesh $\mathcal{T}_h$ is regular in the sense of Ciarlet and 
$\gamma$-shape regular in the sense that for $d=2$ the quotient of the diameters of neighboring elements 
is bounded by $\gamma$ and for $d=3$ we have ${\rm diam}(T_j) \le \gamma\,|T_j|^{1/2}$ for all $T_j\in\mathcal{T}_h$. 
In the following, the notation $\lesssim$ abbreviates $\leq$ up to a 
constant $C>0$ which depends only on $\Omega$, the dimension $d$, and the
$\gamma$-shape regularity of the quasiuniform triangulation $\mathcal{T}_h$. 
Moreover, we use $\simeq$ to indicate that both estimates
$\lesssim$ and $\gtrsim$ hold.

We consider the lowest-order Galerkin discretization of $V$ by piecewise constant functions in 
$S^{0,0}({\mathcal T}_h) := \big\{u \in L^2(\Gamma)\, :\, u|_{T_j} \,\text{is constant} \, \forall T_j \in \mathcal{T}_h \big\}$. 
Throughout, we will work with the basis
${\mathcal B}_h:=\big\{\chi_j \,:\, j = 1,\dots, N\big\}$ of the space $S^{0,0}({\mathcal T}_h)$, 
where $\chi_j$ is the characteristic function associated with $T_j\in\mathcal T_h$.
With the isomorphism $\Phi:\mathbb{R}^N\rightarrow S^{0,0}({\mathcal T}_h)$, $\mathbf{x} \mapsto \sum_{j=1}^N\mathbf{x}_j\chi_j$, 
we note
\begin{equation}\label{eq:isomorphism}
h^{d/2}\norm{\mathbf{x}}_2 \lesssim \norm{\Phi(\mathbf{x})}_{L^2(\Gamma)} \lesssim h^{d/2}\norm{\mathbf{x}}_2
\quad \forall\, \mathbf{x} \in \mathbb{R}^d. 
\end{equation}

With the basis ${\mathcal B}_h$, the Galerkin discretization of $V$ leads to a symmetric and positive definite matrix
$\mathbf V\in\mathbb R^{N\times N}$, where 
\begin{equation} \label{eq:Galerkin} \mathbf V_{jk} = \langle V\chi_k,\chi_j\rangle = \int_{T_j}\int_{T_k}G(x-y)ds_y ds_x, 
\quad \chi_j,\chi_k\in {\mathcal B}_h,
\end{equation} 
and $\langle\cdot,\cdot\rangle$ denotes the $L^2(\Gamma)$-scalar product.

In the following, we study the approximability of the inverse BEM matrix 
$\mathbf{W} = \mathbf{V}^{-1}$ by some blockwise low-rank matrix $\mathbf{W}_{\mathcal{H}}$. 
First, we need to define the underlying block structure,  
which is based on the concept of ``admissibility'', 
introduced in the following definition. 
\begin{definition}[bounding boxes and $\eta$-admissibility]
\label{def:admissibility}
A \emph{cluster} $\tau$ is a subset of the index set $\mathcal{I} = \{1,\ldots,N\}$. For a cluster $\tau \subset \mathcal{I}$, 
we say that $B_{R_{\tau}} \subset \mathbb{R}^d$ 
is a \emph{bounding box} if: 
\begin{enumerate}
\def\theenumi{\roman{enumi}}
 \item \label{item:def:admissibility-i}
$B_{R_{\tau}}$ is a hyper cube with side length $R_{\tau}$,
 \item \label{item:def:admissibility-ii}
$ \overline{T_i} \subset B_{R_{\tau}}$ for all $ i \in \tau $,
\end{enumerate}

For $\eta > 0$, a pair of clusters $(\tau,\sigma)$ with $\tau,\sigma \subset \mathcal{I}$ 
is $\eta$-\emph{admissible} if 
there exist bounding boxes $B_{R_{\tau}}$, $B_{R_{\sigma}}$ satisfying 
(\ref{item:def:admissibility-i})--(\ref{item:def:admissibility-ii})
such that
\begin{equation}\label{eq:admissibility}
\min\{{\rm diam}(B_{R_{\tau}}),{\rm diam}(B_{R_{\sigma}})\} \leq  \eta \; {\rm dist}(B_{R_{\tau}},B_{R_{\sigma}}).
\end{equation}
\end{definition}

\begin{remark}
Since the operator $V$ is symmetric, we are able to use the admissibility condition \eqref{eq:admissibility} 
instead of the stronger admissibility condition
\begin{equation}\label{eq:maxadmissible}
\max\{{\rm diam}(B_{R_{\tau}}),{\rm diam}(B_{R_{\sigma}})\} \leq  \eta \; {\rm dist}(B_{R_{\tau}},B_{R_{\sigma}}),
\end{equation}
which is often encountered in clustering algorithms. 
This follows from the fact that Proposition~\ref{thm:function-approximation}
only needs an admissibility criterion of the form 
${\rm diam}(B_{R_{\tau}}) \leq \eta \, {\rm dist}(B_{R_{\tau}},B_{R_{\sigma}})$.
Due to the symmetry of $V$, deriving a block approximation for the block $\tau\times\sigma$
is equivalent to deriving an approximation for the block $\sigma\times\tau$. 
Therefore, we can interchange roles of the boxes $B_{R_{\tau}}$ and $B_{R_{\sigma}}$, 
and as a consequence the
weaker admissibility condition \eqref{eq:admissibility} is sufficient. 
\end{remark}

\begin{definition}[blockwise rank-$r$ matrices]
Let $P$ be a partition of ${\mathcal I} \times {\mathcal I}$ and $\eta>0$. 
A matrix ${\mathbf W}_{\mathcal{H}} \in \mathbb{R}^{N \times N}$ 
is said to be a \emph{blockwise rank-$r$ matrix}, if for every $\eta$-admissible cluster pair $(\tau,\sigma) \in P$, 
the block ${\mathbf W}_{\mathcal{H}}|_{\tau \times \sigma}$ is a rank-$r$ matrix, i.e., it has the form 
${\mathbf W}_{\mathcal{H}}|_{\tau \times \sigma} = {\mathbf X}_{\tau \sigma} {\mathbf Y}^T_{\tau \sigma}$ with 
$\mathbf{X}_{\tau\sigma} \in \mathbb{R}^{\abs{\tau}\times r}$ 
and $\mathbf{Y}_{\tau\sigma} \in \mathbb{R}^{\abs{\sigma}\times r}$.
Here and below, $\abs{\sigma}$ denotes the cardinality of a finite set $\sigma$.
\end{definition}

\subsection{Approximation of ${\mathbf V}^{-1}$}

The following Theorem~\ref{th:blockapprox} shows that 
admissible matrix blocks of $\mathbf{V}^{-1}$
can be approximated by rank-$r$ matrices and the error converges exponentially in the block rank.

\begin{theorem}\label{th:blockapprox}
Fix the admissibility parameter $\eta > 0$ and $q\in (0,1)$. Let the cluster pair $(\tau,\sigma)$ be $\eta$-admissible. 
Then, for every $k \in \mathbb{N}$, there are matrices
$\mathbf{X}_{\tau\sigma} \in \mathbb{R}^{\abs{\tau}\times r}$, 
$\mathbf{Y}_{\tau\sigma} \in \mathbb{R}^{\abs{\sigma}\times r}$ 
of rank $r \leq C_{\rm dim} (2+\eta)^d q^{-d}k^{d+1}$
such that
\begin{equation}
\norm{\mathbf{V}^{-1}|_{\tau \times \sigma} - \mathbf{X}_{\tau\sigma}\mathbf{Y}_{\tau\sigma}^T}_2 
\leq C_{\rm apx} N^{(d+2)/(d-1)} q^k.
\end{equation}
The constants $C_{\rm apx}$, $C_{\rm dim}>0$ depend only on $\Omega$, $d$, 
and the $\gamma$-shape regularity of the quasiuniform triangulation $\mathcal{T}_h$.
\end{theorem}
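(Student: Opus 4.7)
The plan is to reduce the block-approximation problem to approximating certain discrete potentials near $\tau$ from a low-dimensional space, and then to transfer the resulting function-space bound to the matrix level. Given any coefficient vector $\mathbf{b}\in\mathbb{R}^{N}$ supported on $\sigma$, the vector $\mathbf{x}:=\mathbf{V}^{-1}\mathbf{b}$ corresponds via the isomorphism $\Phi$ to a discrete density $\phi_h\in S^{0,0}(\mathcal{T}_h)$ satisfying $\langle V\phi_h,\chi_j\rangle=\mathbf{b}_j$ for all $j\in\mathcal I$, whence
\[
\langle V\phi_h,\chi_j\rangle=0 \qquad \text{for every } j\notin\sigma.
\]
Thus $\phi_h$ is ``discretely $V$-harmonic'' on the elements indexed by $\tau$. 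By admissibility, and invoking the symmetry remark preceding the theorem to reduce to the one-sided condition $\operatorname{diam}(B_{R_\tau})\le\eta\,\operatorname{dist}(B_{R_\tau},B_{R_\sigma})$, the bounding box $B_{R_\tau}$ is geometrically separated from the support of the data.

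\textbf{From function approximation to matrix factorization.} The next step is to apply Proposition~\ref{thm:function-approximation}, which for every such discretely $V$-harmonic $\phi_h$ and every $k\in\mathbb{N}$ should yield a subspace $W_k\subset S^{0,0}(\mathcal{T}_h)|_\tau$ with $\dim W_k\lesssim(2+\eta)^d q^{-d}k^{d+1}$ and an approximant $\widetilde\phi_h\in W_k$ with $\|\phi_h-\widetilde\phi_h\|_{L^2(\cup_{i\in\tau}T_i)}\lesssim q^k\|\phi_h\|_{L^2(\Gamma)}$. Morally this comes from a Schwarz-like concentration argument: on a geometrically nested sequence of $k$ shrinking boxes interpolating between a slightly enlarged box and $B_{R_\tau}$, each step reduces the $L^2$-error on the smaller box by a fixed factor $q<1$ at the cost of $\mathcal{O}(k^d)$ added degrees of freedom; $k$ iterations combine to give total rank $\lesssim k^{d+1}$ and error $q^k$. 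Writing an $L^2$-orthonormal basis of $W_k$ into the columns of $\mathbf{X}_{\tau\sigma}\in\mathbb{R}^{|\tau|\times r}$ and the $\mathbf{b}$-linear expansion coefficients of $\widetilde\phi_h$ into the rows of $\mathbf{Y}_{\tau\sigma}^T$ produces the claimed blockwise rank-$r$ factorization.

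\textbf{Accounting for the prefactor $N^{(d+2)/(d-1)}$.} The conversion from the $L^2$-function error to the spectral matrix norm proceeds via \eqref{eq:isomorphism}: for a unit vector $\mathbf{b}\in\mathbb{R}^{|\sigma|}$ one has
\[
\bigl\|(\mathbf{V}^{-1}|_{\tau\times\sigma}-\mathbf{X}_{\tau\sigma}\mathbf{Y}_{\tau\sigma}^T)\mathbf{b}\bigr\|_2 \lesssim h^{-d/2}\|\phi_h-\widetilde\phi_h\|_{L^2(\cup_{i\in\tau}T_i)} \lesssim h^{-d/2}q^k\|\phi_h\|_{L^2(\Gamma)},
\]
and $\|\phi_h\|_{L^2(\Gamma)}$ is controlled by $\|\mathbf{b}\|_2$ using $H^{-1/2}$-ellipticity of $V$ (giving $\|\phi_h\|_{H^{-1/2}(\Gamma)}\lesssim\|\mathbf{V}^{-1}\mathbf{b}\|$ in the energy norm), an inverse estimate from $H^{-1/2}$ to $L^2$ on $S^{0,0}(\mathcal{T}_h)$, and one more application of \eqref{eq:isomorphism}. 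Combined with $h\simeq N^{-1/(d-1)}$ for a quasiuniform mesh on the $(d-1)$-dimensional surface $\Gamma$, the accumulated negative powers of $h$ amount to $N^{(d+2)/(d-1)}$.

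\textbf{Main obstacle.} The substantive work lies entirely in Proposition~\ref{thm:function-approximation}, and within it the single-step concentration estimate: because $V$ is a nonlocal pseudodifferential operator, the FEM-style Caccioppoli argument of \cite{BebendorfHackbusch,FaustmannMelenkPraetorius} does not apply verbatim. One must localize the natural $H^{-1/2}$-energy of $V$ to a box while controlling the nonlocal interactions across the cut-off, then combine this with a low-dimensional approximation of the associated harmonic extension by polynomials of degree $\mathcal{O}(k)$ (contributing $\mathcal{O}(k^d)$ degrees of freedom) projected back into the discrete trace space $S^{0,0}(\mathcal{T}_h)|_\tau$. Once this single contraction with fixed factor $q<1$ is established, the geometric iteration, the rank count $k^{d+1}$, and the matrix-level reformulation described above are essentially mechanical.
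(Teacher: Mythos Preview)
Your overall strategy matches the paper's: reduce to Proposition~\ref{thm:function-approximation}, which constructs a low-dimensional space $W_k$ approximating the discretely $V$-harmonic density $\phi_h$ on $B_{R_\tau}\cap\Gamma$ by iterating a single-step contraction on nested boxes, and then transfer to the matrix level via the maps $\Phi_\tau$, $\Lambda_\tau$ and~\eqref{eq:isomorphism}. The identification of the ``main obstacle'' (work with the potential $u=\widetilde V\phi_h$ rather than with $\phi_h$ directly) is also correct.

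However, the form of the function-approximation bound you state is too strong, and your power-count is internally inconsistent. Proposition~\ref{thm:function-approximation} does \emph{not} give $\|\phi_h-\widetilde\phi_h\|_{L^2(\cup_{i\in\tau}T_i)}\lesssim q^k\|\phi_h\|_{L^2(\Gamma)}$; it gives
\[
\min_{w\in W_k}\|\phi_h-w\|_{L^2(B_{R_\tau}\cap\Gamma)}\lesssim h^{-2}\,q^k\,\|\Pi^{L^2}f\|_{L^2(\Gamma)},
\]
with an explicit $h^{-2}$ prefactor and the right-hand side measured in terms of the data $f$. The reason is structural: the iteration (Lemmas~\ref{lem:lowdimapp}--\ref{cor:lowdimapp}) contracts the weighted norm $\triplenorm{\cdot}_{h,R}$ of the potential $u=\widetilde V\phi_h$, and passing back to the density $\phi_h=-[\partial_n u]$ via the inverse-type estimate of Lemma~\ref{lem:estimateunjump} costs $h^{-1/2}$, while the norm $\triplenorm{\cdot}_{h,R}$ itself carries another factor $R/h$. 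With your stated bound, the chain $h^{-d/2}q^k\|\phi_h\|_{L^2}\lesssim h^{-d/2}q^k\cdot h^{-1}\|f\|_{L^2}\lesssim h^{-(d+1)}q^k\|\mathbf b\|_2$ yields $N^{(d+1)/(d-1)}$, not the $N^{(d+2)/(d-1)}$ you assert; the missing factor $h^{-1}$ is precisely the $h^{-2}$ in Proposition~\ref{thm:function-approximation} versus the $h^{-1}$ already hidden in $\|\phi_h\|_{L^2}\lesssim h^{-1}\|f\|_{L^2}$. A minor further point: the single-step approximation in the paper is Scott-Zhang projection onto a coarse volumetric mesh of width $H\sim qR/k$ (Lemma~\ref{lem:lowdimapp}), not polynomial approximation of degree $\mathcal O(k)$; the dimension count $\mathcal O((R/H)^d)\sim k^d$ per step is the same, but the mechanism is different.
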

The approximation estimates for the individual blocks can be combined to assess the approximability 
of ${\mathbf V}^{-1}$ by blockwise rank-$r$ matrices. Particularly satisfactory estimates are obtained 
if the blockwise rank-$r$ matrices have additional structure. To that end, we introduce the following definitions. 

\begin{definition}[cluster tree]
A \emph{cluster tree} with \emph{leaf size} $n_{\rm leaf} \in \mathbb{N}$ is a binary tree $\mathbb{T}_{\mathcal{I}}$ with root $\mathcal{I}$  
such that for each cluster $\tau \in \mathbb{T}_{\mathcal{I}}$ the following dichotomy holds: either $\tau$ is a leaf of the tree and 
$\abs{\tau} \leq n_{\rm leaf}$, or there exist so called sons $\tau'$, $\tau'' \in \mathbb{T}_{\mathcal{I}}$, which are disjoint subsets of $\tau$ with 
$\tau = \tau' \cup \tau''$. The \emph{level function} ${\rm level}: \mathbb{T}_{\mathcal{I}} \rightarrow \mathbb{N}_0$ is inductively defined by 
${\rm level}(\mathcal{I}) = 0$ and ${\rm level}(\tau') := {\rm level}(\tau) + 1$ for $\tau'$ a son of $\tau$. The \emph{depth} of a cluster tree
is ${\rm depth}(\mathbb{T}_{\mathcal{I}}) := \max_{\tau \in \mathbb{T}_{\mathcal{I}}}{\rm level}(\tau)$.  
\end{definition}

\begin{definition}[far field, near field, and sparsity constant]
A partition $P$ of $\mathcal{I} \times \mathcal{I}$ is said to be based on the cluster tree $\mathbb{T}_{\mathcal{I}}$, 
if $P \subset \mathbb{T}_{\mathcal{I}}\times\mathbb{T}_{\mathcal{I}}$. For such a partition $P$ and fixed 
admissibility parameter $\eta > 0$, 
we define the \emph{far field} and the \emph{near field} as 
\begin{equation}\label{eq:farfield}
P_{\rm far} := \{(\tau,\sigma) \in P \; : \; (\tau,\sigma) \; \text{is $\eta$-admissible}\}, \quad P_{\rm near} := P\backslash P_{\rm far}.
\end{equation}
The \emph{sparsity constant} $C_{\rm sp}$, 
introduced in \cite{HackbuschKhoromskij2000a,HackbuschKhoromskij2000b,GrasedyckDissertation}, of such a partition is defined by
\begin{equation}\label{eq:sparsityConstant}
C_{\rm sp} := \max\left\{\max_{\tau \in \mathbb{T}_{\mathcal{I}}}\abs{\{\sigma \in \mathbb{T}_{\mathcal{I}} \, : \, \tau \times \sigma \in P_{\rm far}\}},
\max_{\sigma \in \mathbb{T}_{\mathcal{I}}}\abs{\{\tau \in \mathbb{T}_{\mathcal{I}} \, : \, \tau \times \sigma \in P_{\rm far}\}}\right\}.
\end{equation}
\end{definition}

The following Theorem~\ref{th:Happrox} shows that the matrix
$\mathbf{V}^{-1}$ can be approximated by blockwise rank-$r$ matrices at an exponential rate in the 
block rank $r$: 

\begin{theorem}\label{th:Happrox}
Fix the admissibility parameter $\eta > 0$. 
Let a partition $P$ of ${\mathcal{I}} \times {\mathcal{I}}$ be based on a cluster tree 
$\mathbb{T}_{\mathcal{I}}$. Then, there is a 
blockwise rank-$r$ matrix $\mathbf{W}_{\mathcal{H}}$ such that 
\begin{equation}
\norm{\mathbf{V}^{-1} - \mathbf{W}_{\mathcal{H}}}_2 \leq C_{\rm apx} C_{\rm sp} N^{(d+2)/(d-1)} {\rm depth}(\mathbb{T}_{\mathcal{I}}) e^{-br^{1/(d+1)}}.
\end{equation}
The constant $C_{\rm apx}$ depends only on $\Omega$, $d$, 
and the $\gamma$-shape regularity of the quasiuniform triangulation $\mathcal{T}_h$, 
while the constant $b>0$ additionally depends on $\eta$.
\end{theorem}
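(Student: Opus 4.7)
The plan is to assemble the global approximant $\mathbf{W}_{\mathcal{H}}$ from the per-block approximants supplied by Theorem~\ref{th:blockapprox}, and then control the spectral-norm error by a level-by-level accounting that exploits the sparsity constant $C_{\rm sp}$.

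First I would define $\mathbf{W}_{\mathcal{H}}$ blockwise along the partition $P$. On each admissible block $(\tau,\sigma) \in P_{\rm far}$, apply Theorem~\ref{th:blockapprox} with a parameter $k$ still to be chosen to obtain factors $\mathbf{X}_{\tau\sigma}, \mathbf{Y}_{\tau\sigma}$ of rank at most $r = C_{\rm dim}(2+\eta)^d q^{-d} k^{d+1}$ with
\begin{equation*}
\norm{\mathbf{V}^{-1}|_{\tau\times\sigma} - \mathbf{X}_{\tau\sigma}\mathbf{Y}_{\tau\sigma}^T}_2 \leq C_{\rm apx} N^{(d+2)/(d-1)} q^k.
\end{equation*}
On the near-field blocks $(\tau,\sigma)\in P_{\rm near}$, I set $\mathbf{W}_{\mathcal{H}}|_{\tau\times\sigma} := \mathbf{V}^{-1}|_{\tau\times\sigma}$, so these blocks contribute nothing to the error and the blockwise rank-$r$ property is preserved.

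Next I would control $\norm{\mathbf{V}^{-1} - \mathbf{W}_{\mathcal{H}}}_2$ by the standard level-by-level decomposition used throughout the $\mathcal{H}$-matrix literature. Write $\mathbf{E} := \mathbf{V}^{-1} - \mathbf{W}_{\mathcal{H}} = \sum_{\ell=0}^{L} \mathbf{E}_\ell$, where $L = {\rm depth}(\mathbb{T}_{\mathcal{I}})$ and $\mathbf{E}_\ell$ collects the contributions from those admissible blocks $(\tau,\sigma)\in P_{\rm far}$ with ${\rm level}(\tau) = \ell$ (using that the partition consists of pairs of clusters on matched levels, or splitting into two sums over row/column level otherwise). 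For fixed $\ell$, the sparsity constant ensures that each cluster $\tau$ at level $\ell$ is paired with at most $C_{\rm sp}$ clusters $\sigma$; consequently the nonzero blocks of $\mathbf{E}_\ell$ form a block pattern with at most $C_{\rm sp}$ nonzero entries per block-row and per block-column. A block-diagonal splitting argument (coloring) or a direct estimate using $\norm{\mathbf{E}_\ell}_2^2 \leq \norm{\mathbf{E}_\ell}_1 \norm{\mathbf{E}_\ell}_\infty$ then yields
\begin{equation*}
\norm{\mathbf{E}_\ell}_2 \leq C_{\rm sp} \max_{(\tau,\sigma)\in P_{\rm far}} \norm{\mathbf{V}^{-1}|_{\tau\times\sigma} - \mathbf{X}_{\tau\sigma}\mathbf{Y}_{\tau\sigma}^T}_2,
\end{equation*}
and summing the triangle inequality over the $L+1$ levels produces the factor ${\rm depth}(\mathbb{T}_{\mathcal{I}})$.

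Finally I would translate $q^k$ into the stated exponential rate in $r^{1/(d+1)}$. Given the target block rank $r$, choose $k := \bigl\lfloor (r / (C_{\rm dim}(2+\eta)^d q^{-d}))^{1/(d+1)} \bigr\rfloor$ so that the rank bound from Theorem~\ref{th:blockapprox} is respected, and observe
\begin{equation*}
q^k = e^{-k\log(1/q)} \leq C\, e^{-b\, r^{1/(d+1)}}, \qquad b := \tfrac{\log(1/q)}{(C_{\rm dim}(2+\eta)^d q^{-d})^{1/(d+1)}}.
\end{equation*}
Combining this with the level summation and the blockwise bound from Theorem~\ref{th:blockapprox} yields exactly the stated estimate.

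I do not expect any genuine obstacle here: Theorem~\ref{th:blockapprox} already carries the analytic core of the argument, and what remains is purely combinatorial/organizational. The only step requiring a little care is the level-by-level estimate, where one must use the sparsity condition twice (once for rows, once for columns) to control the spectral norm of $\mathbf{E}_\ell$; the factor ${\rm depth}(\mathbb{T}_{\mathcal{I}})$ in the statement is in fact the price one pays for passing from a per-level bound to a global one via the triangle inequality.
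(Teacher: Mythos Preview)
Your proposal is correct and follows essentially the same route as the paper: define $\mathbf{W}_{\mathcal{H}}$ blockwise from Theorem~\ref{th:blockapprox}, bound the global spectral-norm error by a level-by-level sum exploiting $C_{\rm sp}$ (the paper packages this step as a cited lemma, $\norm{\mathbf{M}}_2 \le C_{\rm sp}\sum_\ell \max\{\norm{\mathbf{M}|_{\tau\times\sigma}}_2 : {\rm level}(\tau)=\ell\}$, whose proof is precisely your $\norm{\cdot}_1\norm{\cdot}_\infty$ argument), and then convert $q^k$ into $e^{-b r^{1/(d+1)}}$.
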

\begin{remark}
For quasiuniform meshes with $\mathcal{O}(N)$ elements,
typical clustering strategies such as the ``geometric clustering'' described in \cite{HackbuschBuch}
lead to fairly balanced cluster trees $\mathbb{T}_{\mathcal{I}}$ 
of depth $\mathcal{O}(\log N)$ and a sparsity constant $C_{\rm sp}$ that is bounded uniformly in $N$.
We refer to~\cite{HackbuschKhoromskij2000a,HackbuschKhoromskij2000b,GrasedyckDissertation,HackbuschBuch} for the fact that the
memory requirement to store $\mathbf{W}_{\mathcal{H}}$ is $\mathcal{O}\big((r+n_{\rm leaf}) N \log N\big)$.
\end{remark}

\begin{remark}
Using $h \simeq N^{-1/(d-1)}$ and 
$\frac{1}{\norm{\mathbf{V}^{-1}}_2} \leq \norm{\mathbf{V}}_2 \lesssim h^{(d-1)/2}\simeq N^{-1/2}$, 
where the last estimate can be found, e.g, in \cite[Lemma~12.6]{Steinbach}, we get a bound for the relative error
\begin{equation}
\frac{\norm{\mathbf{V}^{-1} - \mathbf{W}_{\mathcal{H}}}_2}{\norm{\mathbf{V}^{-1}}_2}
\lesssim C_{\rm apx} C_{\rm sp} N^{(d+5)/(2d-2)} {\rm depth}(\mathbb{T}_{\mathcal{I}}) e^{-br^{1/(d+1)}}.
\end{equation}
\end{remark}

\begin{remark}
The approximation result of Theorem~\ref{th:Happrox} is formulated in the spectral norm. In fact, 
inspection of the proof of Theorem~\ref{th:blockapprox} shows that we prove 
an approximation result in the weighted $L^2$-operator norm 
$\norm{\cdot}_{\sqrt{h}L^2 \rightarrow \frac {1}{\sqrt{h}}L^2}$. Other norms
such as the Frobenius-norm are possible, and the estimates change only 
by some powers of $h$. For the Frobenius norm, we can for instance employ the estimate 
$\norm{\mathbf{A}}_2 \leq \norm{\mathbf{A}}_F \leq \sqrt{N}\norm{\mathbf{A}}_2$
for $\mathbf{A} \in \mathbb{R}^{N\times N}$. 
\end{remark}

\subsection{$\mathcal{H}$-Cholesky decomposition of ${\mathbf V}$}

$LU$- and Cholesky decompositions are well-es\-tablished tools of numerical linear algebra. 
Properties of these factorizations depend on the choice of the ordering of the unknowns. For 
the $\mathcal{H}$-Cholesky decomposition of Theorem~\ref{th:HLU} below we assume that 
the unknowns are organized in a binary cluster tree ${\mathbb T}_{\mathcal I}$. This induces 
an ordering of the unknowns by requiring that the unknowns of one of the sons are numbered first and
those of the other son later; the precise numbering for the leaves is immaterial for our purposes. 
This induced ordering of the unknowns allows us to speak of {\em block lower triangular} matrices, if the  
block partition $P$ is based on the cluster tree ${\mathbb T}_{\mathcal I}$. With this notation, 
we have the following factorization result: 
\begin{theorem}\label{th:HLU}
Let $\mathbf{V} = \mathbf{C}\mathbf{C}^T$ be the Cholesky decomposition.
Let a partition $P$ of $\mathcal{I}\times \mathcal{I}$ be based on a cluster tree
$\mathbb{T}_{\mathcal{I}}$.
Then, there exist a block lower triangular, blockwise rank-$r$ matrix $\mathbf{C_{\mathcal{H}}}$ such that
\begin{itemize}
\item[(i)] $\displaystyle \frac{\norm{\mathbf{C}-\mathbf{C_{\mathcal{H}}}}_2}{\norm{\mathbf{C}}_2} \leq 
C_{\rm chol}  N^{\frac{3}{2d-2}} {\rm depth}(\mathbb{T}_{\mathcal{I}})
e^{-br^{1/(d+1)}}$
\item[(ii)] 
$\displaystyle\frac{\norm{\mathbf{V}-\mathbf{C_{\mathcal{H}}}\mathbf{C_{\mathcal{H}}}^T}_2}{\norm{\mathbf{V}}_2} 
 \leq 2C_{\rm chol} N^{\frac{3}{2d-2}} {\rm depth}(\mathbb{T}_{\mathcal{I}}) e^{-br^{1/(d+1)}} \\
\phantom{\norm{\mathbf{V}-\mathbf{C_{\mathcal{H}}}\mathbf{C_{\mathcal{H}}}^T}_2  \leq}
+ C_{\rm chol}^2 N^{\frac{3}{d-1}} {\rm depth}(\mathbb{T}_{\mathcal{I}})^2 e^{-2br^{1/(d+1)}}$,
\end{itemize}
where $C_{\rm chol} = C_{\rm sp}C_{\rm sc}\sqrt{\kappa_2(\mathbf{V})}$, 
with the sparsity constant $C_{\rm sp}$ of (\ref{eq:sparsityConstant}), 
the spectral condition number $\kappa_2(\mathbf{V}) := \norm{\mathbf{V}}_2 \norm{\mathbf{V}^{-1}}_2$, 
and a constant $C_{\rm sc}$
depending only on $\Omega,d$, the $\gamma$-shape regularity of the quasiuniform triangulation $\mathcal{T}_h$, 
and $\eta$.
\end{theorem}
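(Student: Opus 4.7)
My plan is to mirror the two-stage structure that leads from Theorem~\ref{th:blockapprox} to Theorem~\ref{th:Happrox}: first, prove a blockwise low-rank approximation for admissible off-diagonal blocks of the Cholesky factor~$\mathbf{C}$, then sum the block errors via sparsity and depth. The reduction to blocks proceeds through Schur complements. For $\tau\in\mathbb{T}_{\mathcal{I}}$, let $\tau^-\subset\mathcal{I}$ collect the indices ordered before $\tau$, so that $\mathcal{I}=\tau^-\cup\tau^+$ with $\tau\subset\tau^+$, and set
\begin{equation*}
\mathbf{S}_\tau \,:=\, \mathbf{V}|_{\tau^+\times\tau^+} - \mathbf{V}|_{\tau^+\times\tau^-}\,\mathbf{V}|_{\tau^-\times\tau^-}^{-1}\,\mathbf{V}|_{\tau^-\times\tau^+}.
\end{equation*}
The block Cholesky recursion gives, for any $\sigma\subset\tau^+\setminus\tau$ ordered after $\tau$,
\begin{equation*}
\mathbf{C}|_{\sigma\times\tau} \,=\, \mathbf{S}_\tau|_{\sigma\times\tau}\,\bigl(\mathbf{C}|_{\tau\times\tau}\bigr)^{-T},
\end{equation*}
with $\mathbf{C}|_{\tau\times\tau}$ the Cholesky factor of $\mathbf{S}_\tau|_{\tau\times\tau}$. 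Right multiplication by $(\mathbf{C}|_{\tau\times\tau})^{-T}$ preserves rank and amplifies the spectral-norm error by at most $\norm{\mathbf{V}^{-1}}_2^{1/2}$ (via Cauchy interlacing together with the minimum characterization $y^T\mathbf{S}_\tau y = \min_x(x,y)^T\mathbf{V}(x,y)$ of SPD Schur complements); thus it suffices to approximate $\mathbf{S}_\tau|_{\sigma\times\tau}$ at rank~$r$.

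The main technical ingredient is then a Schur-complement analogue of Theorem~\ref{th:blockapprox}: for every $\eta$-admissible pair $(\sigma,\tau)$ and every $k\in\mathbb{N}$, the block $\mathbf{S}_\tau|_{\sigma\times\tau}$ admits a rank-$r$ approximation with $r\lesssim k^{d+1}$ and error exponentially small in~$k$. This is the main obstacle of the proof, because $\mathbf{S}_\tau$ is not itself a Galerkin discretization of the simple-layer operator and Theorem~\ref{th:blockapprox} does not apply directly. I would adapt its strategy by viewing columns of $\mathbf{S}_\tau|_{\tau^+\times\tau}$ as coefficient vectors of piecewise constants which, via~$\Phi$, arise as discrete $\mathbf{V}$-harmonic extensions off~$\tau$; Proposition~\ref{thm:function-approximation} then yields a low-dimensional subspace capturing these functions on the admissible bounding box around~$\sigma$ with exponential accuracy. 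The $\tau^-$-dependence is benign: admissibility constrains only the separation of $\sigma$ from $\tau$, and the SPD, coercive nature of $\mathbf{S}_\tau$ inherited from $\mathbf{V}$ is all that is needed for the Caccioppoli-type step at the heart of the proof of Theorem~\ref{th:blockapprox}.

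With this blockwise estimate in hand, I assemble $\mathbf{C}_\mathcal{H}$ by replacing each admissible lower-triangular block $\mathbf{C}|_{\sigma\times\tau}$ by its rank-$r$ approximant and leaving the remaining blocks of $\mathbf{C}$ exact; the result is block lower triangular and blockwise rank-$r$. The block errors combine in the spectral norm by the same sparsity-and-depth argument as in Theorem~\ref{th:Happrox}, contributing the factor $C_{\rm sp}\,{\rm depth}(\mathbb{T}_{\mathcal{I}})$. Normalization by $\norm{\mathbf{C}}_2=\norm{\mathbf{V}}_2^{1/2}$ combined with the triangular-inverse amplification $\norm{\mathbf{V}^{-1}}_2^{1/2}$ yields exactly the factor $\sqrt{\kappa_2(\mathbf{V})}$ appearing in $C_{\rm chol}$, while tracking the $N$-power of the absolute Schur-complement estimate through these rescalings produces the stated exponent $N^{3/(2d-2)}$, establishing~(i). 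Assertion~(ii) is then a consequence of the algebraic identity
\begin{equation*}
\mathbf{V}-\mathbf{C}_\mathcal{H}\mathbf{C}_\mathcal{H}^T \,=\, (\mathbf{C}-\mathbf{C}_\mathcal{H})\mathbf{C}^T \,+\, \mathbf{C}_\mathcal{H}(\mathbf{C}-\mathbf{C}_\mathcal{H})^T
\end{equation*}
together with $\norm{\mathbf{C}_\mathcal{H}}_2\le\norm{\mathbf{C}}_2+\norm{\mathbf{C}-\mathbf{C}_\mathcal{H}}_2$; dividing by $\norm{\mathbf{V}}_2=\norm{\mathbf{C}}_2^2$ yields one term linear and one term quadratic in the relative Cholesky error from~(i), matching the form of the bound claimed in~(ii).
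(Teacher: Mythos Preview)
Your overall architecture matches the paper's exactly: express each admissible lower-triangular block of $\mathbf{C}$ as $\mathbf{S}(\sigma,\tau)(\mathbf{C}|_{\tau\times\tau})^{-T}$, bound the triangular inverse by $\|\mathbf{V}^{-1}\|_2^{1/2}$ (the paper does this via a short recursive lemma rather than Cauchy interlacing, but the conclusion is the same), approximate the Schur-complement block at rank~$r$, and then assemble via the sparsity/depth argument. Your derivation of~(ii) from~(i) is also identical to the paper's.

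The gap is in the Schur-complement approximation itself. Proposition~\ref{thm:function-approximation} does not apply: it approximates the \emph{solution} $\phi_h$ of $\langle V\phi_h,\psi_h\rangle=\langle f,\psi_h\rangle$, whereas the entries of $\mathbf{S}(\tau,\sigma)$ are $\langle V\widetilde{\phi},\psi\rangle$ with $\widetilde{\phi}$ the discrete $V$-harmonic extension of $\phi\in S^{0,0}|_{\Gamma_\tau}$ and $\psi\in S^{0,0}|_{\Gamma_\sigma}$. The columns of $\mathbf{S}_\tau|_{\sigma\times\tau}$ are therefore not coefficient vectors of piecewise constants~$\widetilde{\phi}$; they are (up to the basis) the traces $\Pi^{L^2}(V\widetilde{\phi})|_{\Gamma_\sigma}$. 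Now the Caccioppoli machinery of Section~\ref{sec:Approximation-solution} (with the generalization from $\Gamma$ to $\Gamma_\rho$, which the paper introduces precisely for this purpose) does yield a low-dimensional approximation of $\widetilde{\phi}$ on $B_{(1+\delta)R_\sigma}\cap\Gamma_\rho$, because the potential $\widetilde{V}\widetilde{\phi}$ lies in $\mathcal{H}_{h,0}(B_{(1+2\delta)R_\sigma},\Gamma_\rho)$. But this only controls the \emph{near-$\sigma$} contribution $\langle\widetilde{\phi},V\psi\rangle_{L^2(B_{(1+\delta)R_\sigma}\cap\Gamma_\rho)}$. Since $\widetilde{\phi}$ is supported on all of $\Gamma_\tau\cup\Gamma_\rho$ and $V$ is nonlocal, the far-field part $\langle\widetilde{\phi},V\psi\rangle_{L^2(\Gamma\setminus B_{(1+\delta)R_\sigma})}$ must be handled separately. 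The paper does this with a second, independent ingredient you omit: a degenerate (separable) approximation of the Green's function via Chebyshev interpolation (Lemma~\ref{lem:lowrankGreensfunction}), which produces a rank-$r$ far-field operator $V_r$ with exponential accuracy. The low-rank Schur-complement bound (Theorem~\ref{lem:Schur}) is the sum of these two pieces; neither alone suffices.
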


\section{Local approximation from low dimensional spaces}
\label{sec:Approximation-solution}
For a given function $f \in L^2(\Gamma)$, we consider the boundary integral equation
\begin{equation*}
V\phi = f \quad \text{on} \; \Gamma.
\end{equation*}
Here, we may consider the simple-layer operator $V \in L(H^{-1}(\Gamma),L^2(\Gamma))$ as a mapping
from $H^{-1}(\Gamma)$ to $L^2(\Gamma)$, see, e.g., \cite{SauterSchwab}.
The discrete variational problem is to find $\phi_h \in S^{0,0}({\mathcal T}_h)$ such that
\begin{equation} \label{eq:model}
\langle V \phi_h,\psi_h\rangle = \langle f,\psi_h\rangle \qquad
\forall \psi_h \in S^{0,0}({\mathcal T}_h).
\end{equation}
With $\mathbf{V}$ from \eqref{eq:Galerkin} and $\mathbf{b} \in \mathbb{R}^N$ defined by $\mathbf{b}_j = \skp{f,\chi_j}$,
the variational problem 
\eqref{eq:model} is equivalent to solving the linear system 
\begin{equation}
\label{eq:discreteSystem}
\mathbf{V} \mathbf{x} = \mathbf{b}. 
\end{equation}
By ellipticity of the simple-layer operator, both problems \eqref{eq:model} and \eqref{eq:discreteSystem} 
have a unique solution. 
The solution $\mathbf{x} \in \mathbb{R}^N$ is linked to \eqref{eq:model} via $\phi_h = \sum_{j=1}^N \mathbf{x}_j \chi_j$. \\

In the following, we repeatedly employ the $L^2(\Gamma)$-orthogonal projection 
$\Pi^{L^2}:L^2(\Gamma)\!\rightarrow\! S^{0,0}(\mathcal{T}_h)$ defined by
\begin{equation}\label{eq:L2projection} 
\skp{\Pi^{L^2} v,\psi_h} = \skp{v,\psi_h} \quad \forall \psi_h \in S^{0,0}(\mathcal{T}_h). 
\end{equation}

The question of approximating 
the matrix block $\mathbf{V}^{-1}|_{\tau\times\sigma} \approx \mathbf{X}_{\tau \sigma}\mathbf{Y}_{\tau \sigma}^T$ 
can be rephrased in terms of functions and function spaces 
as the question of how well $\phi_h|_{B_{R_\tau}}$ can be approximated from low dimensional spaces
for (arbitrary) data $f \in L^2(\Gamma)$ with $\operatorname*{supp} f \subset B_{R_{\sigma}} \cap \Gamma$. 
The present section is devoted to the proof of 
such an approximation result formulated in the following Proposition~\ref{thm:function-approximation}.  

\begin{proposition}\label{thm:function-approximation}
Let $(\tau,\sigma)$ be a cluster pair with bounding boxes $B_{R_\tau}$, $B_{R_\sigma}$. 
Assume $ \eta \operatorname{dist}(B_{R_\tau},B_{R_\sigma}) \geq \operatorname*{diam}(B_{R_\tau})$ for some $\eta > 0$, 
and $R_{\tau} \leq 2 \operatorname*{diam}(\Omega)$. 
Fix $q \in (0,1)$. 
Then, for each $k\in\mathbb{N}$ there exists a subspace
$W_k$ of $S^{0,0}({\mathcal T}_h)$ with $\dim W_k\leq C_{\rm dim} (2+\eta)^d q^{-d}k^{d+1}$ 
such that for arbitrary $f \in L^2(\Gamma)$ with 
$\operatorname*{supp}  f  \subset B_{R_\sigma}\cap\Gamma$, the solution $\phi_h$ of \eqref{eq:model} 
satisfies 
\begin{equation}
\label{eq:thm:function-approximation-1}
\min_{w \in W_k} \|\phi_h - w\|_{L^2(B_{R_{\tau}}\cap \Gamma)} 
\leq C_{\rm box} h^{-2} q^k \|\Pi^{L^2}f\|_{L^2(\Gamma)}
\leq C_{\rm box} h^{-2} q^k \|f\|_{L^2(\Gamma)}.
\end{equation}
The constants $C_{\rm dim}$,  $C_{\rm box}>0$ depend only on $\Omega$, $d$, 
and the $\gamma$-shape regularity of the quasiuniform triangulation $\mathcal{T}_h$.
\end{proposition}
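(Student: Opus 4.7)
The plan is to exploit the Galerkin orthogonality implied by $\operatorname*{supp} f \subset B_{R_{\sigma}}\cap \Gamma$: for every test function $\psi_h \in S^{0,0}(\mathcal{T}_h)$ with support disjoint from $B_{R_{\sigma}}$, equation \eqref{eq:model} gives $\langle V\phi_h,\psi_h\rangle = 0$. I call such a discrete density \emph{discretely harmonic} on $B_{R_{\tau}}$; the proof then reduces to showing that discretely harmonic densities admit low-dimensional approximations on the interior cluster. The geometric setup is a nested family of $k$ concentric hypercubes $B_{R_{\tau}} = B^{(k)} \subset B^{(k-1)} \subset \cdots \subset B^{(0)}$, with $B^{(0)}$ of side length at most of order $(2+\eta)R_{\tau}$ and still separated from $B_{R_{\sigma}}$ (possible by the admissibility hypothesis ${\rm dist}(B_{R_{\tau}},B_{R_{\sigma}}) \geq R_{\tau}/\eta$), and with successive shells of width of order $R_{\tau}/k$.

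The technical heart is a single-step approximation lemma: for any discretely harmonic $\phi_h$ on $B^{(j-1)}$, there exists $w_j$ in a subspace $U_j \subset S^{0,0}(\mathcal{T}_h)$ of dimension $O\bigl((2+\eta)^d q^{-d} j^d\bigr)$ with $\|\phi_h - w_j\|_{L^2(B^{(j)}\cap \Gamma)} \leq q\,\|\phi_h\|_{L^2(B^{(j-1)}\cap \Gamma)}$. I would construct $w_j$ by passing to the simple-layer potential $u := V\phi_h$, which is harmonic in $\mathbb{R}^d \setminus \Gamma$: an interior Caccioppoli estimate on a tubular neighborhood of $B^{(j)} \cap \Gamma$ controls $u$ by its $L^2$-norm on the larger box, a polynomial approximation of total degree of order $j$ supplies a low-dimensional approximant $\tilde u$ of $u$ with error decaying like $q^j$, and the jump of the normal derivative across $\Gamma$ combined with the $L^2$-projection $\Pi^{L^2}$ transfers this bulk approximant back to a piecewise-constant $w_j$ on $\Gamma$. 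Iterating over $j=1,\ldots,k$, and observing that the residual $\phi_h - w_j$ remains discretely harmonic on $B^{(j)}$, produces a total approximation error of $q^k \|\phi_h\|_{L^2(B^{(0)}\cap\Gamma)}$ in the space $W_k := U_1 + \cdots + U_k$ of dimension $\sum_{j=1}^k O(j^d) = O(k^{d+1})$. The outer norm $\|\phi_h\|_{L^2(B^{(0)}\cap\Gamma)}$ is then converted into $h^{-2}\|\Pi^{L^2}f\|_{L^2(\Gamma)}$ by combining the $H^{-1/2}$-ellipticity of $V$ with inverse estimates between $L^2$ and $H^{-1/2}$ on the piecewise-constant space.

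The main obstacle is the single-step approximation lemma. Unlike the FEM analog treated in \cite{FaustmannMelenkPraetorius,FMP13}, no direct discrete Caccioppoli inequality for $\phi_h$ is available, because $V$ is nonlocal. Passing to $u = V\phi_h$ restores the locality of the underlying elliptic problem, but the discrete orthogonality $\langle V\phi_h,\psi_h\rangle = 0$ only says that the elementwise averages of $u|_{\Gamma}$ vanish on elements of the cluster, not that $u|_{\Gamma}$ itself vanishes pointwise. One must therefore combine this weak ``averaged zero trace'' information with the bulk harmonicity of $u$ off $\Gamma$ and the jump relations to close the interior estimate without sacrificing the exponential rate in $j$, and controlling the final error in the full (unweighted) $L^2(\Gamma)$ norm rather than in a weaker dual norm is what forces the polynomial-in-$h$ prefactor $h^{-2}$ in the bound.
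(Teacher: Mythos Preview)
Your overall architecture matches the paper's: pass to the potential $u = \widetilde V\phi_h$, establish a Caccioppoli-type estimate, iterate a single-step low-dimensional approximation over $k$ nested concentric boxes, and convert the outer norm to $h^{-2}\|\Pi^{L^2}f\|_{L^2}$ via ellipticity of $V$ and the inverse inequality \eqref{eq:inverse-a}. The dimension count and your diagnosis of the main obstacle (the orthogonality only controls elementwise averages of $u|_\Gamma$) are also correct.

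The genuine gap is in the single-step mechanism. You propose to approximate $u$ in the bulk by a polynomial $\tilde u$ of degree $\sim j$ and then recover $w_j$ as $\Pi^{L^2}[\partial_n \tilde u]$. But a polynomial on the full box is smooth across $\Gamma$, so $[\partial_n \tilde u] = 0$ and nothing is produced. More importantly, for the iteration to close you need the residual $\phi_h - \sum_{i \le j} w_i$ to again be discretely harmonic on $B^{(j)}$, i.e., the corresponding potential must again satisfy the orthogonality there. This forces each $w_j$ to be the normal jump of a function that itself lies in the orthogonality class. The paper resolves both issues by working in the closed subspace $\mathcal{H}_{h,0}(D,\Gamma) \subset H^1(D)$ of piecewise harmonic functions with piecewise constant normal jump satisfying \eqref{eq:orthogonality}, and taking as single-step approximant $\Pi_{h,R} I_H u$, where $I_H$ is Scott--Zhang interpolation on a coarse mesh of width $H \sim qR/k$ and $\Pi_{h,R}$ is the orthogonal projection onto $\mathcal{H}_{h,0}$ in the weighted norm $\triplenorm{\cdot}_{h,R}$. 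The projection guarantees that both approximant and residual stay in $\mathcal{H}_{h,0}$, so the iteration can proceed; and the Caccioppoli inequality (Lemma~\ref{lem:Caccioppoli}) is precisely the modified interior estimate on $\mathcal{H}_{h,0}$ that exploits the orthogonality to absorb the boundary term $\int_\Gamma \eta^2[\partial_n v]\,v\,ds$ arising from the jump. You correctly flag that closing this estimate is the hard part, but the polynomial route does not close it; the missing device is the projection onto $\mathcal{H}_{h,0}$.
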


The proof of Proposition~\ref{thm:function-approximation} will be given at the end 
of this section and relies on several observations. First, the potential
\begin{equation*}
u(x) := \widetilde{V}\phi_h(x) = \int_{\Gamma} G(x-y)\phi_h(y)ds_y, \qquad x \in \mathbb{R}^d \setminus \Gamma, 
\end{equation*}
generated by the solution $\phi_h$ of \eqref{eq:model} 
is harmonic on $\Omega$ as well as on $\Omega^c := \mathbb{R}^d \setminus \overline{\Omega}$ and 
satisfies the jump conditions 
\begin{align}\label{eq:jumpconditions}
[\gamma_0 u] &:= \gamma_0^{\text{ext}}u - \gamma_0^{\text{int}}u = 0 \in H^{1/2}(\Gamma), \nonumber \\
[\partial_n u] &:= \gamma_1^{\text{ext}}u - \gamma_1^{\text{int}}u = - \phi_h \in H^{-1/2}(\Gamma).
\end{align}
Here, $\gamma_0^{\text{ext}},\gamma_0^{\text{int}}$ denote the exterior and interior trace operator and 
$\gamma_1^{\text{ext}},\gamma_1^{\text{int}}$ the exterior and interior conormal derivative, 
see, e.g., \cite{SauterSchwab}. Hence, the potential $u$ is in a space of piecewise harmonic functions, 
and the jump of the normal derivative is piecewise constant on the mesh ${\mathcal T}_h$. 
These properties will characterize the spaces ${\mathcal H}_h(D)$ to be introduced below. 
The second observation is an orthogonality condition. For $f$ with $\operatorname*{supp} f  \subset B_{R_{\sigma}}\cap \Gamma$
equation \eqref{eq:model} implies 
\begin{equation}
\label{eq:orthogonality-foo}
\skp{u,\psi_{h}} = \skp{f,\psi_{h}} = 0
\quad \forall \psi_{h} \in S^{0,0}({\mathcal T}_{h})\,\text{with}\, \operatorname*{supp} \psi_{h}  \subset \Gamma \setminus B_{R_{\sigma}}.  
\end{equation}
With the admissibility condition 
$\eta{\rm dist}(B_{R_{\tau}},B_{R_{\sigma}})\!\geq\!\min\{\operatorname*{diam}(B_{R_{\tau}}),
\operatorname*{diam}(B_{R_{\sigma}})\}\! >\! 0$, 
this leads to the orthogonality condition 
\begin{equation}
\label{eq:orthogonality}
\skp{u,\psi_{h}} = 0
\quad \forall \psi_{h} \in S^{0,0}({\mathcal T}_{h}) \,\text{with}\, \operatorname*{supp} \psi_{h}  \subset B_{R_{\tau}} \cap \Gamma,  
\end{equation}
i.e., on $B_{R_\tau} \cap \Gamma$ the potential $u$ is orthogonal 
to piecewise constants. 

With these observations we are able to 
prove a Caccioppoli-type estimate \linebreak (Lemma~\ref{lem:Caccioppoli}) 
for piecewise harmonic functions satisfying the orthogonality \eqref{eq:orthogonality}.
 Then, a low dimensional approximation result (Lemma~\ref{lem:lowdimapp})
derived by Scott-Zhang interpolation of the Galerkin solution $\phi_h$, can be iterated 
as in \cite{BebendorfHackbusch,Boerm},
which finally leads to exponential convergence (Lemma~\ref{cor:lowdimapp}). 

\subsection{Properties of piecewise polynomial spaces}

For an edge/face $T \subset \Gamma$ with affine parametrization $\xi$ and $p \geq 0$,
we let $P_{p}(T)$ be the space 
$P_{p}(T) := \big\{\pi \circ \xi|_T\, : \, \pi\in P_p(\mathbb{R}^{d-1}) \big\}$ 
of polynomials of degree $p$. 
Moreover, we define $S^{p,1}(\mathcal{T}_h):=\big\{{v\in C(\Gamma)}\, : \, v|_T \in P_p(T) \; \forall T \in \mathcal{T}_h\big\}$ 
to be the space of all $\mathcal{T}_h$-piecewise polynomials of degree $p$ that 
are continuous on $\Gamma$.   

Throughout this section we make use of the Scott-Zhang projection 
\begin{equation*} 
I_h: H^1(\Gamma) \rightarrow S^{1,1}({\mathcal T}_h) 
\end{equation*}
introduced in \cite{ScottZhang}.  
We note that the Scott-Zhang projection in \cite{ScottZhang} is only defined for functions in $H^1(\Gamma)$, 
but by averaging only over triangles (and not over faces) it may also be well defined for functions in $L^2(\Gamma)$. 

By
\begin{equation*} 
\omega_T := \text{interior}\left(\bigcup\left\{\overline{T'} \in \mathcal{T}_h \;:\; 
T \cap T' \neq \emptyset\right\}\right), 
\end{equation*}
we denote the element patch of $T$, 
which contains $T$ and all elements $T' \in \mathcal{T}_h$ that share a node with $T$. 
The operator $I_h$ has well-known 
local approximation properties for $H^{\ell}$-functions
\begin{equation}\label{eq:SZapprox}
\norm{v-I_h v}_{H^m(T)}^2 \leq C h^{2(\ell-m)}\abs{v}_{H^{\ell}(\omega_T)}^2, 
\quad 0 \leq m \leq 1, \ m \leq \ell \leq 1.
\end{equation}
The constant $C>0$ depends only on the $\gamma$-shape regularity of $\mathcal{T}_h$ and the dimension $d$. 
With the triangle inequality this also implies the $L^2$-stability estimate
\begin{equation}\label{eq:L2SZ}
 \norm{I_h v}_{L^2(T)}\leq C\norm{v}_{L^2(\omega_T)}.
\end{equation}

The following lemma constructs a stable operator 
on $L^2(\Gamma)$ that features additional orthogonality properties: 

\begin{lemma}\label{lem:interpolator}
There exists a linear operator
$\mathcal{J}_h : L^2(\Gamma) \rightarrow S^{d,1}(\mathcal{T}_h)$ 
with the following properties for all $v \in H^1(\Gamma)$
\begin{enumerate}
\def\theenumi{\roman{enumi}}
 \item
\label{item:lem:interpolator-i}
$\norm{\mathcal{J}_h v}_{L^2(T)} \leq C \norm{v}_{L^2(\omega_T)} \quad \forall T \in \mathcal{T}_h$;
\item
\label{item:lem:interpolator-ii}
$\norm{\nabla \mathcal{J}_h v}_{L^2(T)} \leq C \norm{\nabla v}_{L^2(\omega_T)} \quad \forall T \in \mathcal{T}_h$;
\item
\label{item:lem:interpolator-iii}
$\skp{v- \mathcal{J}_h v,\psi} = 0 \quad \forall \psi \in S^{0,0}(\mathcal{T}_h)$;
\item
\label{item:lem:interpolator-iv}
$\norm{v- \mathcal{J}_h v}_{L^2(T)} \leq C h \norm{\nabla v}_{L^2(\omega_T)} \quad \forall T \in \mathcal{T}_h$.
\end{enumerate}
The constant $C>0$ depends only on $d$ and the $\gamma$-shape regularity 
of the quasiuniform triangulation $\mathcal{T}_h$.
\end{lemma}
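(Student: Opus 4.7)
The plan is to define $\mathcal{J}_h$ as the Scott--Zhang projection $I_h$ corrected by a local bubble term on each element that restores the orthogonality against piecewise constants. The choice of polynomial degree $d$ in the image space $S^{d,1}(\mathcal{T}_h)$ is precisely what makes room for the standard element bubble $b_T$ (quadratic in the affine parameter on a line segment for $d=2$; the cubic $27\lambda_1\lambda_2\lambda_3$ in barycentric coordinates on a triangle for $d=3$) that vanishes on $\partial T$. Extending each $b_T$ by zero outside $T$ therefore produces a function in $S^{d,1}(\mathcal{T}_h)$, and a scaling argument on the reference element yields $\int_T b_T\,ds \simeq \abs{T}$, $\norm{b_T}_{L^2(T)} \simeq \abs{T}^{1/2}$, and $\norm{\nabla b_T}_{L^2(T)} \simeq h^{-1} \abs{T}^{1/2}$.

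I would then set
\begin{equation*}
\mathcal{J}_h v := I_h v + \sum_{T \in \mathcal{T}_h} c_T b_T,
\qquad c_T := \frac{\int_T (v - I_h v)\,ds}{\int_T b_T\,ds}.
\end{equation*}
Property~(\ref{item:lem:interpolator-iii}) is immediate: since $S^{0,0}(\mathcal{T}_h)$ is spanned by the characteristic functions $\chi_T$ and each $b_T$ is supported in $T$, one has $\skp{v - \mathcal{J}_h v,\chi_T} = \int_T (v - I_h v)\,ds - c_T \int_T b_T\,ds = 0$ by the definition of $c_T$.

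For the three remaining estimates I would first bound $|c_T|$ in two ways. The Cauchy--Schwarz inequality together with $\int_T b_T \simeq \abs{T}$ gives $|c_T| \lesssim \abs{T}^{-1/2}\norm{v - I_h v}_{L^2(T)}$, which combined with the $L^2$-stability~\eqref{eq:L2SZ} and $\norm{b_T}_{L^2(T)} \simeq \abs{T}^{1/2}$ proves~(\ref{item:lem:interpolator-i}). Substituting instead the Scott--Zhang approximation estimate~\eqref{eq:SZapprox} (with $m=0$, $\ell=1$) into the same Cauchy--Schwarz step upgrades this to $|c_T| \lesssim h\,\abs{T}^{-1/2}\,\norm{\nabla v}_{L^2(\omega_T)}$. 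Multiplying by $\norm{\nabla b_T}_{L^2(T)} \simeq h^{-1}\abs{T}^{1/2}$ and combining with the gradient stability of $I_h$ from~\eqref{eq:SZapprox} (with $m=\ell=1$) yields~(\ref{item:lem:interpolator-ii}); multiplying instead by $\norm{b_T}_{L^2(T)} \simeq \abs{T}^{1/2}$ and combining with~\eqref{eq:SZapprox} (with $m=0$, $\ell=1$) yields~(\ref{item:lem:interpolator-iv}).

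The only non-routine ingredient is the choice of polynomial degree $d$ in the image space, dictated by the need for a bubble that vanishes identically on $\partial T$ so that the correction lies in $S^{d,1}(\mathcal{T}_h)$; beyond this, everything is an interplay of the local stability and approximation bounds of $I_h$ with the standard scaling of the element bubble on shape-regular quasiuniform meshes.
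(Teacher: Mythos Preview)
Your proposal is correct and follows essentially the same approach as the paper: define $\mathcal{J}_h$ as the Scott--Zhang projection plus, on each element, a multiple of the element bubble $b_T$ chosen to restore the mean, and then combine the standard scaling of $b_T$ with the local stability and approximation properties of $I_h$. The paper's proof is organized slightly differently (it writes out the estimates directly rather than first isolating bounds on $|c_T|$), but the construction and the ingredients are the same.
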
 

\begin{proof}
Let $b_T \in S^{d,1}(\mathcal{T}_h)$ be the element bubble function for each $T \in \mathcal{T}_h$,
which is the product of the $d$ hat-functions associated with $T$ and 
scaled such that 
$\norm{b_T}_{\infty} = 1$. Denote by $\chi_T$ the characteristic function 
of $T$. With the Scott-Zhang projection $I_h$, we define
\begin{equation*}
\mathcal{J}_h v := I_h v + \sum_{T \in \mathcal{T}_h} b_T \frac{\skp{v-I_h v, \chi_T}}{\int_T b_T}.
\end{equation*} 
For $T' \in \mathcal{T}_h$ we have
\begin{align*}
\skp{v - \mathcal{J}_h v, \chi_{T'}} &= \skp{v - I_h v - \sum_{T \in \mathcal{T}_h} b_T 
\frac{\skp{v-I_h v, \chi_T}}{\int_T b_T}, \chi_{T'}} \\
&= \skp{v- I_h v, \chi_{T'}} - \frac{\skp{b_{T'},\chi_{T'}}}{\int_{T'} b_{T'}}\skp{v-I_h v, \chi_{T'}} = 0,
\end{align*}
which proves (\ref{item:lem:interpolator-iii}).
The Cauchy-Schwarz inequality and the approximation property \eqref{eq:SZapprox} of $I_h$ imply
\begin{align*}
\norm{v-\mathcal{J}_h v}_{L^2(T)} &\leq \norm{v-I_h v}_{L^2(T)} + \frac{\norm{b_T}_{L^2(T)}}{\abs{\int_T b_T}}\abs{T}^{1/2} 
\norm{v-I_h v}_{L^2(T)} \\
&\lesssim \norm{v-I_h v}_{L^2(T)} \lesssim h \norm{\nabla v}_{L^2(\omega_T)}.
\end{align*}
This proves (\ref{item:lem:interpolator-iv}). 
The first assertion (\ref{item:lem:interpolator-i}) follows with the same argument due to the $L^2$-stability
of the Scott-Zhang projection $I_h$ \eqref{eq:L2SZ}.
Finally, we get
\begin{align*}
\norm{\nabla(v- \mathcal{J}_h v)}_{L^2(T)} &\leq \norm{\nabla (v-I_h v)}_{L^2(T)} + 
\frac{\norm{\nabla b_T}_{L^2(T)}}{\abs{\int_T b_T}}\abs{T}^{1/2} \norm{v-I_h v}_{L^2(T)} \\
&\lesssim \norm{\nabla(v-I_h v)}_{L^2(T)} + \frac{1}{h}\norm{v-I_h v}_{L^2(T)} \lesssim \norm{\nabla v}_{L^2(\omega_T)},
\end{align*}
and the triangle inequality finishes the proof of (\ref{item:lem:interpolator-ii}).
\end{proof}

The following inverse inequalities also holds for locally refined $K$-meshes, 
but we will only require it for the quasiuniform mesh $\mathcal{T}_h$ at hand.

\begin{lemma}[{\cite[Thm~{4.1}, Thm.~{4.7}]{GHS}}]
\label{lem:inverseinequality}
There is a constant $C>0$ depending only on $\Gamma$, the $\gamma$-shape regularity
of the quasiuniform triangulation ${\mathcal T}_h$, and the polynomial degree $p$ such that 
\begin{align}
\label{eq:inverse}
\norm{v_h}_{H^{1/2}(\Gamma)}&\leq  C h^{-1/2}\norm{v_h}_{L^2(\Gamma)} 
\qquad \forall v_h \in S^{p,1}({\mathcal T}_h),\\ 
\label{eq:inverse-a}
\norm{v_h}_{L^{2}(\Gamma)}&\leq  C h^{-1/2}\norm{v_h}_{H^{-1/2}(\Gamma)} 
\qquad \forall v_h \in S^{p,0}({\mathcal T}_h). 
\end{align}
\end{lemma}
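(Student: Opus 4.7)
My plan is to treat the two estimates quite differently: (\ref{eq:inverse}) via elementwise inverse estimates plus interpolation, and (\ref{eq:inverse-a}) via an $L^2$-duality argument combined with (\ref{eq:inverse}) applied to a suitable continuous approximant.

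For (\ref{eq:inverse}), I would first establish the elementwise bound $\|v_h\|_{H^1(T)} \leq C h^{-1}\|v_h\|_{L^2(T)}$ by scaling to a reference element and invoking the equivalence of norms on the finite-dimensional polynomial space $P_p$. Because $v_h \in S^{p,1}(\mathcal{T}_h)$ is continuous on the closed surface $\Gamma$, it belongs to $H^1(\Gamma)$ and squaring and summing the elementwise bounds yields the global estimate $\|v_h\|_{H^1(\Gamma)} \leq C h^{-1}\|v_h\|_{L^2(\Gamma)}$. An application of the standard interpolation identity $H^{1/2}(\Gamma) = [L^2(\Gamma),H^1(\Gamma)]_{1/2,2}$ (valid on Lipschitz surfaces) together with the interpolation inequality $\|v_h\|_{H^{1/2}} \leq C\|v_h\|_{L^2}^{1/2}\|v_h\|_{H^1}^{1/2}$ then produces the claimed $h^{-1/2}$ scaling.

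For (\ref{eq:inverse-a}), I would exploit an operator $\widetilde{\mathcal{J}}_h\colon L^2(\Gamma)\to S^{p,1}(\mathcal{T}_h)$ with the three properties (a) $L^2$-stability, (b) $\langle v-\widetilde{\mathcal{J}}_h v,\psi\rangle=0$ for all $\psi\in S^{p,0}(\mathcal{T}_h)$, and (c) locality. For $p=0$ this is exactly $\mathcal{J}_h$ from Lemma~\ref{lem:interpolator}; for general $p\geq 0$ one constructs such an operator by enriching the Scott--Zhang projection with suitably scaled element-bubble corrections, one for each piecewise-polynomial degree of freedom in $S^{p,0}$, exactly as in the proof of Lemma~\ref{lem:interpolator}. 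The orthogonality then yields
\begin{equation*}
\|v_h\|_{L^2(\Gamma)}^2 = \langle v_h,v_h\rangle = \langle v_h,\widetilde{\mathcal{J}}_h v_h\rangle \leq \|v_h\|_{H^{-1/2}(\Gamma)}\,\|\widetilde{\mathcal{J}}_h v_h\|_{H^{1/2}(\Gamma)},
\end{equation*}
and applying (\ref{eq:inverse}) to the continuous piecewise polynomial $\widetilde{\mathcal{J}}_h v_h\in S^{p,1}(\mathcal{T}_h)$, followed by $L^2$-stability of $\widetilde{\mathcal{J}}_h$, gives $\|\widetilde{\mathcal{J}}_h v_h\|_{H^{1/2}(\Gamma)} \leq C h^{-1/2}\|v_h\|_{L^2(\Gamma)}$. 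Dividing through by $\|v_h\|_{L^2(\Gamma)}$ yields (\ref{eq:inverse-a}).

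The main obstacle, in my view, is purely technical rather than conceptual: verifying that the interpolation identity $[L^2,H^1]_{1/2,2}=H^{1/2}$ is available in the precise form used, since $\Gamma$ is only Lipschitz polyhedral and different natural definitions of $H^{1/2}(\Gamma)$ (Sobolev--Slobodeckij, trace space, interpolation space) must be reconciled. A secondary technical point is making the bubble-enriched Scott--Zhang operator genuinely $L^2$-stable for $p\geq 1$ with shape-regularity-only constants; this is essentially a scaling argument on the reference element but requires some bookkeeping to control the dual pairing $\langle v-I_h v,\psi\rangle$ against each basis function of $S^{p,0}(T)$ rather than only against $\chi_T$.
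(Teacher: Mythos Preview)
The paper does not prove this lemma at all; it is quoted from \cite{GHS} (Theorems~4.1 and~4.7), where the results are in fact established for the more general class of locally refined $K$-meshes. So there is no ``paper's own proof'' to compare against, and your task reduces to checking whether your argument stands on its own for the quasiuniform case at hand.

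Your approach is the standard one and is essentially correct. For \eqref{eq:inverse} the route via the elementwise $H^1$--$L^2$ inverse estimate, summation, and then real interpolation $[L^2(\Gamma),H^1(\Gamma)]_{1/2}=H^{1/2}(\Gamma)$ is exactly how such results are usually derived on quasiuniform meshes; the interpolation identity on closed Lipschitz surfaces is indeed available (see, e.g., McLean's monograph), so the obstacle you flag is real but well documented. For \eqref{eq:inverse-a} the duality argument with a bubble-enriched quasi-interpolant is also correct in spirit.

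One small correction: your operator $\widetilde{\mathcal J}_h$ cannot in general map into $S^{p,1}(\mathcal T_h)$ while remaining $L^2$-orthogonal to all of $S^{p,0}(\mathcal T_h)$. Already for $p=0$ the operator $\mathcal J_h$ of Lemma~\ref{lem:interpolator} lands in $S^{d,1}$, not $S^{0,1}$; for general $p$ you need the target space $S^{q,1}$ with $q$ large enough that the element bubbles span a complement of $P_p$ (roughly $q=p+d$ suffices). This does not affect the argument, since \eqref{eq:inverse} applies to $S^{q,1}$ with a constant depending on $q=q(p,d)$, hence ultimately on $p$ as stated. Just adjust the codomain and the proof goes through.
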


\subsection{The spaces ${\mathcal H}_h(D)$ and ${\mathcal H}_{h,0}(D,\Gamma_{\rho})$ 
of piecewise harmonic functions}

For an index set $\rho \subset \mathcal{I}$, 
let $\Gamma_{\rho} \subset \Gamma$ be a relative open polygonal manifold, consisting of the union of elements
in $\mathcal{T}_h$ associated with the elements in $\rho$, i.e.,
\begin{equation}\label{eq:Gammarho}
\Gamma_{\rho} = {\rm interior}\left(\bigcup_{j\in\rho} \overline{T_j} \right).
\end{equation}
Let $D$ be a domain and set $D^- := D\cap \Omega$ and 
$D^+ := D\cap \overline{\Omega}^c$. 
A function $v \in H^1(D^+ \cup D^-)$ is called piecewise harmonic, if 
\begin{equation*}
\int_{D^{\pm}}{\nabla v \cdot \nabla \varphi}
\,dx = 0 \quad \forall  \varphi \in C_0^{\infty}(D^{\pm}).
\end{equation*}

\begin{remark}
For a piecewise harmonic function $v \in H^1(D^+\cup D^-)$, 
we can define the jump of the normal derivative $[\partial_{n}v]|_{D\cap\Gamma}$ on $D\cap\Gamma$ as the functional
\begin{equation}\label{eq:unjumpdef}
\skp{[\partial_{n}v]|_{D\cap\Gamma},\varphi} := \int_{D^+\cup D^-}\nabla v \cdot \nabla \varphi dx \quad \forall \varphi \in H^1_0(D). 
\end{equation}
We note that the value $\skp{[\partial_{n}v]|_{D\cap\Gamma},\varphi}$ depends only on $\varphi|_{D\cap \Gamma}$ 
in the sense that \linebreak
$\skp{[\partial_{n}v]|_{D\cap\Gamma},\varphi} = 0$ for all 
$\varphi \in C_0^{\infty}(D)$ with $\varphi|_{D\cap\Gamma} = 0$.
Moreover, if $[\partial_{n}v]|_{D\cap\Gamma}$ is a function in $L^2(D\cap\Gamma)$, it is unique.
The definition \eqref{eq:unjumpdef} is consistent with \eqref{eq:jumpconditions} in the following sense:
For the potential $V\phi_h$ with $\phi_h \in S^{0,0}(\mathcal{T}_h)$, we have the jump condition
$[\partial_n V \phi_h]|_{D\cap\Gamma} = -\phi_h|_{D\cap\Gamma}$.
\end{remark}

The space of piecewise harmonic functions on $D$ with piecewise constant jump of the normal derivative is defined by
\begin{align*}
\mathcal{H}_h(D) &:= \{v\in H^1(D^+\cup D^-) \colon \text{ $v$ is piecewise harmonic}, \\
 &  \quad \phantom{v\in H^1(D^+\cup D^-): }\exists \widetilde{v} \in S^{0,0}({\mathcal T}_h) \;
\mbox{s.t.} \ 
[\partial_{n}v]|_{D \cap \Gamma} = \widetilde{v}|_{D \cap \Gamma}\}.
\end{align*}
 
The potential $u = \widetilde{V}\phi_h$ for the problem \eqref{eq:model}
indeed satisfies $u \in \mathcal{H}_h(D) \cap H^1(D)$ for any bounded domain $D$. 
Moreover, for a bounding box $B_{R_{\sigma}}$ with \eqref{eq:admissibility}, the potential $u$ 
additionally satisfies the orthogonality condition \eqref{eq:orthogonality}. These observations are captured 
by the following space ${\mathcal H}_{h,0}(D,\Gamma_{\rho})$: 
\begin{align}\label{eq:ApproxSpaceOrtho}
{\mathcal H}_{h,0}(D,\Gamma_{\rho})&:={\mathcal H}_h(D) \cap 
\{v \in H^1(D)\colon \operatorname*{supp} [\partial_{n}v]|_{D\cap\Gamma} \subset \overline{\Gamma_{\rho}},\nonumber\\
&  \phantom{{\mathcal H}(D) \cap \{v \in } \langle v,\varphi\rangle
=0\; \forall \varphi \in S^{0,0}({\mathcal T}_h)\,\text{with}\, \operatorname*{supp} \varphi \subset D\cap 
\overline{\Gamma_{\rho}}\}.
\end{align}
For the proof of Proposition~\ref{thm:function-approximation} and subsequently of Theorem~\ref{th:blockapprox}
and Theorem~\ref{th:Happrox}, we will only need the case $\Gamma_{\rho} = \Gamma$. 
The general case of the screen problem $\Gamma_{\rho} \subsetneq \Gamma$ will only be required for our analysis 
of the $\mathcal{H}$-Cholesky decomposition in Section~\ref{sec:LU-decomposition}.

The following lemma shows that this space is a closed subspace of $H^1(D\setminus\Gamma)$; 
later, this property will allow us to consider the
orthogonal projection from $H^1(D\backslash\Gamma)$ onto $\mathcal{H}_h(D)$ and from
$H^1(D)$ onto ${\mathcal H}_{h,0}(D,\Gamma_{\rho})$. 

\begin{lemma}\label{lem:closedsubspace}
The space $\mathcal{H}_h(D)$ is a closed subspace of $H^1(D\setminus\Gamma)$, 
and $\mathcal{H}_{h,0}(D,\Gamma_{\rho})$ is a closed subspace of $H^1(D)$. 
\end{lemma}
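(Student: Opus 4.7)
My plan is to realise both spaces as intersections of closed linear subspaces. In each case I would take a sequence $v_n\to v$ in the ambient space and verify that each defining property of ${\mathcal H}_h(D)$ or ${\mathcal H}_{h,0}(D,\Gamma_\rho)$ passes to the limit. The decisive input will be that the finite element space $S^{0,0}({\mathcal T}_h)$ is finite-dimensional because $\Gamma$ is compact; this is what makes the constraint ``the jump is piecewise constant'' closed even though the jump is only defined as a dual-space element.

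For piecewise harmonicity, the identities $\int_{D^\pm}\nabla v_n\cdot\nabla\varphi\,dx = 0$ for $\varphi\in C_0^\infty(D^\pm)$ pass directly to the limit. For the jump constraint, I would use~\eqref{eq:unjumpdef} to regard $J(v):=[\partial_{n}v]|_{D\cap\Gamma}$ as an element of $(H^1_0(D))^*$; boundedness of the map $v\mapsto J(v)$ from $H^1(D^+\cup D^-)$ then gives $J(v_n)\to J(v)$ in $(H^1_0(D))^*$. The linear embedding $\widetilde v\mapsto(\varphi\mapsto\int_{D\cap\Gamma}\widetilde v\,\varphi\,ds)$ sends $S^{0,0}({\mathcal T}_h)$ into $(H^1_0(D))^*$ with finite-dimensional, hence closed, image, so $J(v)$ still lies in the image and the limit $v$ belongs to ${\mathcal H}_h(D)$.

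For ${\mathcal H}_{h,0}(D,\Gamma_\rho)$ I observe first that $H^1(D)$ is itself closed inside $H^1(D\setminus\Gamma)$: it is the kernel of the continuous trace-jump map $H^1(D^+\cup D^-)\to H^{1/2}(D\cap\Gamma)$. The previous step then gives ${\mathcal H}_h(D)\cap H^1(D)$ as a closed subspace of $H^1(D)$. The support condition $\operatorname{supp} J(v)\subset\overline{\Gamma_\rho}$ cuts out a finite-dimensional, hence closed, subspace of the image above, so its preimage under $J$ is closed. Finally, for each admissible $\varphi$ the condition $\langle v,\varphi\rangle = 0$ is, by the trace theorem, a continuous linear functional of $v\in H^1(D)$ and therefore defines a closed subspace. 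The intersection of all these closed sets is ${\mathcal H}_{h,0}(D,\Gamma_\rho)$.

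The main obstacle I would expect is that $[\partial_{n}v]$ is, for a generic $v\in H^1(D^+\cup D^-)$, only a distributional object rather than an $L^2$-function, so the statement ``the jump is piecewise constant'' is delicate to formulate in the limit. The key observation is that no refined trace theory for the jump is needed: once one works in $(H^1_0(D))^*$, finite-dimensionality of the target space $S^{0,0}({\mathcal T}_h)$ closes the argument automatically.
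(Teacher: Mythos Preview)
Your argument is correct. Both you and the paper exploit the same two facts---continuity of the jump functional $v\mapsto[\partial_n v]|_{D\cap\Gamma}$ and finite-dimensionality of the target $S^{0,0}({\mathcal T}_h)$---but you package them differently. The paper argues concretely: it fixes a test function $\varphi\in C_0^\infty(D)$ whose support meets $\Gamma$ in a single element $T$, uses~\eqref{eq:unjumpdef} to extract the constant value $[\partial_n v^j]|_T$, and shows these constants converge elementwise to a piecewise constant limit that it then identifies with $[\partial_n v]$. You instead view $J:H^1(D^+\cup D^-)\to(H^1_0(D))^*$ as a globally continuous linear map and observe that $\mathcal{H}_h(D)$ is the intersection of the (closed) space of piecewise harmonic functions with $J^{-1}(\iota(S^{0,0}({\mathcal T}_h)))$, the preimage of a finite-dimensional and hence closed subspace. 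Your route is slightly more abstract and avoids the element-by-element bookkeeping; it also makes more transparent why no delicate trace theory for the jump is required. For $\mathcal{H}_{h,0}(D,\Gamma_\rho)$ the paper simply says ``the intersection of closed spaces is again closed'' without spelling out why each constraint is closed; your treatment of the support condition and the orthogonality constraints is in fact more explicit than the paper's.
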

\begin{proof}
We first show that $\mathcal{H}_h(D)$ is a closed subspace of $H^1(D\setminus \Gamma)$. 
Let $(v^j)_{j\in \mathbb{N}} \subset \mathcal{H}_h(D)$ be a sequence converging to $v \in H^1(D\backslash \Gamma)$. 
For $\varphi \in C_0^{\infty}(D^{\pm})$, we have
\begin{equation*}
\skp{\nabla v, \nabla \varphi}_{L^2(D^{\pm})} = \lim_{j\rightarrow \infty} \skp{\nabla v^j, \nabla \varphi}_{L^2(D^{\pm})} = 0.
\end{equation*}
Hence, $v$ is piecewise harmonic on $D^+\cup D^-$. 

Pick $\varphi \in C_0^{\infty}(D)\setminus \{0\}$ with $\operatorname*{supp} \varphi  \cap \Gamma \subset T \in \mathcal{T}_h$. 
Then 
\begin{align*}
[\partial_{n}v^j]|_T\skp{1,\varphi}_{L^2(T)} = \skp{[\partial_{n}v^j], \varphi}_{L^2(T)} =
 \skp{\nabla v^j, \nabla \varphi}_{L^2(D\backslash \Gamma)} \stackrel{j \rightarrow \infty}{\longrightarrow} 
\skp{\nabla v,\nabla \varphi}_{L^2(D\backslash \Gamma)}
\end{align*}
shows that the 
piecewise constant function $[\partial_{n}v^j]$ converges elementwise. 
Hence, the sequence $\left([\partial_{n}v^j]\right)_{j \in \mathbb{N}}$ converges pointwise 
to a piecewise constant function $\widetilde{v}$. This piecewise constant limit $\tilde v$ coincides with 
the jump of the normal derivative $[\partial_{n}v] \in H^{-1/2}(\Gamma)$ as the following calculation for arbitrary  
$\varphi \in C_0^{\infty}(D)$ shows: 
\begin{align*}
\skp{\widetilde{v},\varphi}_{L^2(D\cap \Gamma)} &= \lim_{j\rightarrow\infty}\skp{[\partial_{n}v^j], \varphi}_{L^2(D\cap \Gamma)} 
 = \skp{\nabla v,\nabla \varphi}_{L^2(D\backslash \Gamma)} = \skp{[\partial_{n}v],\varphi}_{L^2(D \cap \Gamma)}.
\end{align*}
Finally, $\mathcal{H}_{h,0}(D,\Gamma_{\rho})$ is a closed subspace of $H^1(D)$, since 
$\mathcal{H}_{h}(D)$ is a closed subspace of $H^1(D\setminus\Gamma)$ and the intersection
of closed spaces is again closed. 
\end{proof}

We will derive an approximation of the Galerkin solution $\phi_h$ by approximating the potential $u = \widetilde V \phi_h$. 
In view of the relation $\phi_h = - [\partial_n u]$ we have to control the jump of the normal derivative 
by a norm of $u$.
Lemma~\ref{lem:estimateunjump} below provides such an estimate,
 which may be seen as an inverse estimate, since $[\partial_{n}u]$ is 
a discrete function. For its proof, we need the following 
Lemma~\ref{lem:quasi-1D} as well as the definition of ``concentric boxes''. 

\begin{definition}
Two (open) boxes $B_R$, $B_{R^\prime}$
are said to be concentric boxes with side lengths $R$ and $R^\prime$, if 
they have the same barycenter and $B_R$ can be obtained by a stretching
of $B_{R^\prime}$ by the factor $R/R^\prime$ taking their common barycenter
as the origin. 
\end{definition}

The following Lemma~\ref{lem:quasi-1D}  is quite classical, and we 
include its short proof for the reader's convenience.  
\begin{lemma}
\label{lem:quasi-1D}
\begin{enumerate}
\def\theenumi{\roman{enumi}}
\item
\label{item:lem:quasi-1D-i}
For $R \leq 1$ denote by $S_{R}:=\{x \in \mathbb{R}^d\,:\, \operatorname{dist}(x,\Gamma) < R\}$
the tubular neighborhood of $\Gamma$ of width $R$. Then, there is a constant
$C > 0$ that depends only on $\Gamma$ such that 
\begin{equation*}
\|v\|_{L^2(S_{R})} \leq C \left[ \sqrt{R} \|\gamma_0^{\rm int} v\|_{L^2(\Gamma)} + 
R \|\nabla v\|_{L^2(S_{R})}\right]
\qquad \forall v \in H^1(S_{R}).
\end{equation*} 
\item
\label{item:lem:quasi-1D-ii}
Let $\delta$, $R > 0$. Let $B_R$ and $B_{(1+\delta)R}$ be two 
concentric boxes with side lengths  $R$ and $(1+\delta)R$. 
Then, there is a constant $C > 0$, which depends only on the dimension $d$, such that 
for all $v \in H^1(B_{(1+\delta)R})$, we have
\begin{equation*}
\|v\|^2_{L^2(B_{(1+\delta)R}\setminus B_R)}  \leq 
 C \delta R 
\left(
\frac{1}{(1+\delta)R} \|v\|^2_{L^2(B_{(1+\delta)R})}
+ (1+\delta)R \|\nabla v\|^2_{L^2(B_{(1+\delta)R})}
\right). 
\end{equation*}
\end{enumerate}
\end{lemma}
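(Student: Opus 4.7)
Both parts are quasi-one-dimensional arguments based on the fundamental theorem of calculus applied in a single direction. For part (\ref{item:lem:quasi-1D-i}), I would introduce tubular neighborhood coordinates around $\Gamma$: since $\Gamma$ is polygonal/polyhedral (hence Lipschitz), there exists $R_0>0$ depending only on $\Gamma$ and a bi-Lipschitz parametrization $\Phi:\Gamma\times(-R,R)\to S_R$ of the form $\Phi(s,t)=s+t\,n(s)$ for $R\le R_0$, with Jacobian bounded above and below by constants depending only on $\Gamma$; this is built from a finite atlas of Lipschitz-graph charts. For a.e.\ $s\in\Gamma$, the map $t\mapsto v(\Phi(s,t))$ lies in $H^1((-R,R))$ and satisfies
\begin{equation*}
v(s,t)=\gamma_0^{\rm int}v(s)+\int_0^t \partial_t v(s,\tau)\,d\tau.
\end{equation*}
Squaring, using $(a+b)^2\le 2a^2+2b^2$, applying Cauchy--Schwarz to bound the integral by $|t|\int_{-R}^R|\partial_t v(s,\tau)|^2\,d\tau$, and integrating $t\in(-R,R)$ and $s\in\Gamma$ yields $\|v\|_{L^2(S_R)}^2\lesssim R\|\gamma_0^{\rm int}v\|_{L^2(\Gamma)}^2+R^2\|\nabla v\|_{L^2(S_R)}^2$, whence the claim after taking square roots. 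For the residual range $R_0<R\le 1$, one starts from the case $R=R_0$ and absorbs the resulting $R_0$-dependent factors into $C$.

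For part (\ref{item:lem:quasi-1D-ii}), I translate so that the common barycenter is the origin, so $B_R=(-R/2,R/2)^d$ and $B_{(1+\delta)R}=(-(1+\delta)R/2,(1+\delta)R/2)^d$, and decompose the shell into the $2d$ slabs $\Sigma_i^{\pm}:=\{x\in B_{(1+\delta)R}\,:\,\pm x_i>R/2\}$. Fix $\Sigma_d^+$ and set $I:=(-(1+\delta)R/2,(1+\delta)R/2)$. For $x_d\in(R/2,(1+\delta)R/2)$, any $y_d\in I$, and a.e.\ $x'\in I^{d-1}$,
\begin{equation*}
v(x',x_d)=v(x',y_d)+\int_{y_d}^{x_d}\partial_d v(x',s)\,ds.
\end{equation*}
Squaring, applying $(a+b)^2\le 2a^2+2b^2$, using Cauchy--Schwarz to bound the integral by $(1+\delta)R\int_I|\partial_d v(x',s)|^2\,ds$, averaging over $y_d\in I$ (division by $(1+\delta)R$), and then integrating $x_d$ over an interval of length $\delta R/2$ and $x'$ over $I^{d-1}$ gives the stated bound on $\Sigma_d^+$. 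Summing over the $2d$ slabs yields the claim with a constant depending only on $d$.

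The only non-routine step is the geometric preparation in part (\ref{item:lem:quasi-1D-i}): producing a bi-Lipschitz tubular parametrization with constants depending only on $\Gamma$, uniform near edges and vertices of the polygonal/polyhedral boundary. Once this is in place, the remainder of part (\ref{item:lem:quasi-1D-i}) reduces to a short Poincar\'e-type estimate in the normal direction, and part (\ref{item:lem:quasi-1D-ii}) is a straightforward one-dimensional argument on a rectangular shell presenting no analogous difficulty.
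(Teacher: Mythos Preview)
Your proof is correct and, for part~(\ref{item:lem:quasi-1D-i}), coincides with the paper's: a 1D fundamental-theorem-of-calculus estimate in the normal direction, transferred to $S_R$ via boundary-fitted coordinates (the paper phrases this tersely as ``using (locally) boundary fitted coordinates'' without the care you take regarding the bi-Lipschitz tubular chart). For part~(\ref{item:lem:quasi-1D-ii}) the paper uses a slightly different 1D ingredient: it invokes the scaled Gagliardo--Nirenberg inequality
\begin{equation*}
\|v\|_{L^\infty(0,(1+\delta)R)}^2 \lesssim \tfrac{1}{(1+\delta)R}\|v\|_{L^2(0,(1+\delta)R)}^2 + (1+\delta)R\,\|v'\|_{L^2(0,(1+\delta)R)}^2
\end{equation*}
to obtain a pointwise bound in one coordinate direction and then integrates over a slab of thickness $\delta R$. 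Your fundamental-theorem-of-calculus identity followed by averaging in $y_d$ produces exactly the same pointwise estimate, so the two arguments are equivalent; yours is simply self-contained, proving the needed $L^\infty$ bound from scratch rather than naming it. The subsequent slab decomposition and summation are identical in spirit.
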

\begin{proof}
{\sl ad (\ref{item:lem:quasi-1D-i}):} 
For 
smooth univariate functions $v$ the fundamental theorem of calculus 
yields $v(x) = v(0) + \int_0^x v'(t)\,dt$. 
Hence, the Young inequality and the Cauchy-Schwarz inequality yield
$v^2(x) \leq 2v^2(0) +2 R \|v^\prime\|_{L^2(0,R)}^2$. 
Integration over the interval $(0,R)$ gives 
\begin{equation*}
\|v\|^2_{L^2(0,R)} \leq 2R v^2(0) + 2 R^2 \|v^\prime\|^2_{L^2(0,R)}.
\end{equation*}
This 1D result implies the desired estimate by using (locally) boundary 
fitted coordinates. 

{\sl ad (\ref{item:lem:quasi-1D-ii}):} We start with the 1D Gagliardo-Nirenberg
inequality for the interval $I = (0,1)$: 
$\|v\|^2_{L^\infty(I)} \leq C \|v\|_{L^2(I)} \|v\|_{H^1(I)} 
\leq C \|v\|^2_{L^2(I)} + C \|v\|_{L^2(I)} \|v^\prime\|_{L^2(I)}$. 
A scaling argument then yields 
\begin{align*}
\|v\|^2_{L^\infty(0,(1+\delta)R)} 
&\lesssim  \frac{1}{(1+\delta)R} \|v\|^2_{L^2(0,(1+\delta)R)} 
+ \|v\|_{L^2(0,(1+\delta)R)} \|v^\prime\|_{L^2(0,(1+\delta)R)} 
\\
&\lesssim    \frac{1}{(1+\delta)R} \|v\|^2_{L^2(0,(1+\delta)R)} 
+ (1+\delta)R \|v^\prime\|^2_{L^2(0,(1+\delta)R)}.
\end{align*}
We may assume that $B_{(1+\delta)R} = (0,(1+\delta)R)^d$. Then, 
this 1D estimate implies 
\begin{align*}
\|v\|^2_{L^2((0,\delta R) \times (0,(1+\delta)R)^{d-1})}
&\lesssim 
\frac{\delta R }{(1+\delta)R} \|v\|^2_{L^2(B_{(1+\delta)R})} \\
&\phantom{\leq} \, + \delta R (1+\delta)R \|\nabla v\|^2_{L^2(B_{(1+\delta)R})}. 
\end{align*}
By arguing similarly for the remaining parts of 
$B_{(1+\delta)R}\setminus B_R$, we get the desired result.
\end{proof}

\begin{lemma}\label{lem:estimateunjump}
Let $\delta \in (0,1)$, $R > 0$ be such that 
$\frac{h}{R}\leq \frac{\delta}{4}$. 
Let $B_R$, $B_{(1+\delta)R}$ be two concentric boxes of 
side lengths $R$ and $(1+\delta)R$. 
Then, there exists a constant $C> 0$ depending only on $\Omega$, $d$, and the $\gamma$-shape regularity
of the quasiuniform triangulation $\mathcal{T}_h$, such that for 
all $v \in \mathcal{H}_h(B_{(1+\delta)R})$
\begin{align}
\label{eq:lem:estimateunjump-ii}
\norm{[\partial_{n}v]}_{L^2(B_{(1+\delta/2)R}\cap \Gamma)} 
\leq C h^{-1/2} 
\norm{\nabla v}_{L^2(B_{(1+\delta)R})}.
\end{align}
\end{lemma}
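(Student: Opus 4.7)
The plan is to exploit that $[\partial_{n}v]$ is piecewise constant on the elements of $\mathcal{T}_h$ meeting $B_{(1+\delta)R}$ (by definition of $\mathcal{H}_h(B_{(1+\delta)R})$) and to run a duality argument based on the jump representation~\eqref{eq:unjumpdef}. The core idea is to build a test function $\varphi \in H_0^1(B_{(1+\delta)R})$ whose trace on $\Gamma$ behaves, up to bubble-function weights, like $[\partial_{n}v]$ on $B_{(1+\delta/2)R}\cap\Gamma$, and whose lifting obeys the sharp scaling $\norm{\nabla\varphi}_{L^2(B_{(1+\delta)R})} \lesssim h^{-1/2}\norm{\varphi|_\Gamma}_{L^2(\Gamma)}$. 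Plugging $\varphi$ into~\eqref{eq:unjumpdef} converts surface information about $[\partial_{n}v]$ into volume information about $\nabla v$, and the $h^{-1/2}$-factor in~\eqref{eq:lem:estimateunjump-ii} is precisely this inverse-type lifting bound.

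For the construction, let $\widetilde{v} \in S^{0,0}(\mathcal{T}_h)$ be the piecewise constant satisfying $[\partial_{n}v]|_{B_{(1+\delta)R}\cap\Gamma} = \widetilde{v}|_{B_{(1+\delta)R}\cap\Gamma}$, set $\mathcal{T}_h^\star := \{T \in \mathcal{T}_h : T\cap \overline{B_{(1+\delta/2)R}}\cap\Gamma \neq \emptyset\}$, and reuse the element bubble functions $b_T$ from the proof of Lemma~\ref{lem:interpolator}. I would first define the surface function
\[
\widetilde{\psi} := \sum_{T\in\mathcal{T}_h^\star} \widetilde{v}|_T\, b_T \in H^1(\Gamma),
\]
which is supported in $\bigcup_{T\in \mathcal{T}_h^\star} \overline{T} \subset \overline{B_{(1+3\delta/4)R}}\cap\Gamma$ by the assumption $h/R \leq \delta/4$. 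The standard scalings $\int_T b_T\,ds \simeq |T| \simeq \norm{b_T}^2_{L^2(T)}$ yield both $\skp{\widetilde{v},\widetilde{\psi}}_{L^2(\Gamma)} \simeq \sum_{T\in\mathcal{T}_h^\star} \widetilde{v}|_T^2 |T| \gtrsim \norm{[\partial_{n}v]}^2_{L^2(B_{(1+\delta/2)R}\cap\Gamma)}$ and $\norm{\widetilde{\psi}}^2_{L^2(\Gamma)} \lesssim \sum_{T\in \mathcal{T}_h^\star} \widetilde{v}|_T^2 |T|$. I would then lift $\widetilde\psi$ to $\varphi \in H^1_0(B_{(1+\delta)R})$ with trace $\widetilde{\psi}$ on $\Gamma$, supported in a tubular $h$-neighbourhood of $\operatorname*{supp}(\widetilde{\psi})$, and satisfying $\norm{\varphi}_{L^2(B_{(1+\delta)R})} \lesssim h^{1/2}\norm{\widetilde{\psi}}_{L^2(\Gamma)}$ and $\norm{\nabla\varphi}_{L^2(B_{(1+\delta)R})} \lesssim h^{-1/2}\norm{\widetilde{\psi}}_{L^2(\Gamma)}$. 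Here the assumption $h/R \leq \delta/4$ is used again to keep this $h$-tube inside $B_{(1+\delta)R}$, so that $\varphi$ may indeed vanish on $\partial B_{(1+\delta)R}$.

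With $\varphi$ at hand, combining the jump representation~\eqref{eq:unjumpdef} with the Cauchy--Schwarz inequality gives
\[
\skp{[\partial_{n}v],\widetilde{\psi}}
= \int_{B_{(1+\delta)R}\setminus\Gamma} \nabla v \cdot \nabla \varphi\,dx
\leq \norm{\nabla v}_{L^2(B_{(1+\delta)R})}\,\norm{\nabla\varphi}_{L^2(B_{(1+\delta)R})}.
\]
Writing $A := \norm{[\partial_{n}v]}^2_{L^2(B_{(1+\delta/2)R}\cap\Gamma)}$ and chaining this bound with the two displays of the previous paragraph yields $A \lesssim h^{-1/2}\norm{\nabla v}_{L^2(B_{(1+\delta)R})}\sqrt{A}$, and dividing by $\sqrt{A}$ (the case $A=0$ being trivial) gives the claim.

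The main obstacle I anticipate is the construction of the lifting $\varphi$ with the sharp $h^{\mp 1/2}$-scaling together with the support constraint: on a flat face of $\Gamma$ this is the textbook tensor-product construction using a hat function in the normal direction on a tube of thickness $h$, but some care is needed near edges and vertices of the polyhedral boundary, where the tubular neighbourhood becomes geometrically less transparent. Shape-regularity of $\mathcal{T}_h$ together with the bounding-box assumption $h/R \leq \delta/4$ should make this entirely routine; the remainder of the argument is then a clean duality computation.
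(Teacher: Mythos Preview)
Your proposal is correct and takes a genuinely different route from the paper's proof. The paper argues by duality as well, but instead of your explicit bubble-based test function it writes $\norm{[\partial_n v]}_{L^2(B_r\cap\Gamma)}$ as a supremum over $w\in L^2(\Gamma)$, replaces $w$ by $\mathcal{J}_h w$ from Lemma~\ref{lem:interpolator} (using that $[\partial_n v]$ is piecewise constant), lifts $\mathcal{J}_h w$ with the \emph{global} lifting operator $\mathcal{L}:H^{1/2}(\Gamma)\to H^1(\mathbb{R}^d)$, multiplies by a cut-off, and controls $\|\nabla(\eta\mathcal{L}\mathcal{J}_h w)\|_{L^2}$ via the inverse estimate~\eqref{eq:inverse} together with Lemma~\ref{lem:quasi-1D}. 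This yields only $\norm{[\partial_n v]}_{L^2(B_r\cap\Gamma)}\lesssim h^{-1/2}\sqrt{1+1/\varepsilon}\,\norm{\nabla v}_{L^2(B_{(1+\varepsilon)r})}$; the spurious $\sqrt{1+1/\varepsilon}$ is then removed by a covering argument with boxes of side $\sim\delta R$. In short, the paper trades your explicit $h$-tube lifting (and the attendant corner/edge verification you flag) for off-the-shelf global tools, at the price of a two-step argument; your approach is more elementary and direct, but puts all the geometric work into the lifting construction. Both are sound, and the one technical point you single out---making the $h^{-1/2}$-lifting of $\widetilde\psi$ compactly supported in $B_{(1+\delta)R}$ near edges and vertices of the polyhedron---is indeed routine once one observes that each $b_T$ vanishes on $\partial T$, so the liftings can be done element by element.
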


\begin{proof} 
We prove \eqref{eq:lem:estimateunjump-ii} in two steps, the first step being the proof
of the auxiliary estimate \eqref{eq:lem:estimateunjump-i} below. 
The second step shows \eqref{eq:lem:estimateunjump-ii} with the aid
of \eqref{eq:lem:estimateunjump-i} and a simple covering argument. 

{\bf Step 1:} We show the following assertion: 
If $\frac{h}{r} \leq \frac{\varepsilon}{4}$ for $r,\varepsilon >0$, then 
there exists a constant $C>0$ depending only on the shape regularity
constant $\gamma$, the domain $\Omega$, and $d$ such that
for all $v \in \mathcal{H}_h(B_{(1+\varepsilon)r})$ 
\begin{align}
\label{eq:lem:estimateunjump-i}
\norm{[\partial_{n}v]}_{L^2(B_{r}\cap \Gamma)} \leq  C h^{-1/2} 
\sqrt{1 + \frac{1}{\varepsilon}}
\norm{\nabla v}_{L^2(B_{(1+\varepsilon)r})}. 
\end{align}
To see this, let $\mathcal{E}^{\rm{int}}:H^{1/2}(\Gamma)\rightarrow H^1(\Omega)$ 
and $\mathcal{E}^{\rm{ext}}:H^{1/2}(\Gamma) \rightarrow H^1(\overline{\Omega}^c)$ be (bounded, linear) lifting operators
for $\Omega$ and $\Omega^c$ (cf. \cite[Thm.~{5.7}]{Necas}). Then, introduce the
(bounded, linear) lifting $\mathcal{L}:H^{1/2}(\Gamma)\rightarrow H^1(\mathbb{R}^d)$
by 
\begin{equation*}
\mathcal{L} w := \left\{
\begin{array}{l}
 \mathcal{E}^{\rm{int}}  w \quad\textrm{in }\overline{\Omega}, \\
 \mathcal{E}^{\rm{ext}}  w \quad \textrm{in }\overline{\Omega}^c. 
 \end{array}
 \right.
\end{equation*}

Lemma~\ref{lem:interpolator} provides an operator 
$\mathcal{J}_h:L^2(\Gamma)\rightarrow S^{d,1}(\mathcal{T}_h) \subset H^1(\Gamma)$. Furthermore, 
$w- \mathcal{J}_h w$ is orthogonal to piecewise constant functions so that 
\begin{equation}
\label{eq:temp10}
\norm{[\partial_{n}v]}_{L^2(B_{r}\cap \Gamma)} = \sup_{\substack{w \in L^2(\Gamma) \\ \operatorname*{supp} w \subset B_r}} 
\frac{\skp{[\partial_{n}v],w}}{\norm{w}_{L^2(\Gamma)}} 
= \sup_{\substack{w \in L^2(\Gamma)\\ \operatorname*{supp} w \subset B_r}} \frac{\skp{[\partial_{n}v],\mathcal{J}_h w}}{\norm{w}_{L^2(\Gamma)}}.
\end{equation}
Note that the construction of $\mathcal{J}_h$ implies 
$\operatorname*{supp} \mathcal{J}_h w \subset B_{(1+\varepsilon/2)r} \cap \Gamma$. 
Let $\eta$ be a smooth cut-off function with $0\leq\eta\leq 1$, $\eta \equiv 1$ on 
$B_{(1+\varepsilon/2)r}$, and $\operatorname*{supp} \eta   \subset B_{(1+\varepsilon)r}$ and 
$\norm{\nabla \eta}_{L^{\infty}(B_{(1+\varepsilon)r})} \lesssim \frac{1}{\varepsilon r}$. 
With the lifting $\mathcal{L} (\mathcal{J}_h w)$, we can estimate 
\begin{align}\label{eq:temp1}
\skp{[\partial_{n}v],\mathcal{J}_h w}_{L^2(B_{r+2h}\cap \Gamma)} &=
\skp{[\partial_{n}v],\gamma_0^{\text{int}}\eta\mathcal{L}\mathcal{J}_h w}_{L^2(B_{r+2h}\cap \Gamma)} \nonumber \\
&= \int_{B_{(1+\varepsilon)r}}{\nabla v \cdot \nabla(\eta \mathcal{L}\mathcal{J}_h w )dx} \nonumber \\
&\leq \norm{\nabla v}_{L^2(B_{(1+\varepsilon)r})}\norm{\nabla (\eta \mathcal{L}\mathcal{J}_h w)}_{L^2(B_{(1+\varepsilon)r})}.
\end{align}
We have to estimate $\eta \mathcal{L}\mathcal{J}_h w$ further. 
Noting that $\eta \equiv 1$ on $B_r$, we use the product rule to estimate
\begin{align}
\label{eq:temp1a}
\|\nabla (\eta \mathcal{L}\mathcal{J}_h w)\|_{L^2(B_{(1+\varepsilon)r})}
&\lesssim  \frac{1}{\varepsilon r} \|\mathcal{L}\mathcal{J}_h w\|_{L^2(B_{(1+\varepsilon) r}\setminus B_r)}
+ \|\nabla \mathcal{L}\mathcal{J}_h w\|_{L^2(B_{(1+\varepsilon)r})}.
\end{align}
The continuity of the lifting $\mathcal{L}:H^{1/2}(\Gamma) \rightarrow H^1(\mathbb{R}^d)$ and
the inverse estimate \eqref{eq:inverse} of Lemma~\ref{lem:inverseinequality} give 
\begin{align}
\label{eq:temp1aa}
\|\nabla\mathcal{L}\mathcal{J}_h w\|_{L^2(S_{\sqrt{d}(1+\varepsilon)r})} 
&\leq \|\nabla\mathcal{L}\mathcal{J}_h w\|_{L^2(\mathbb{R}^d)}\nonumber\\
&\lesssim  \|\mathcal{J}_h w\|_{H^{1/2}(\Gamma)}
\lesssim \frac{1}{\sqrt{h}} \|\mathcal{J}_h w\|_{L^2(\Gamma)}, 
\end{align}
which is the key step for the treatment of the second term in (\ref{eq:temp1a}). Let us now
turn to the first term in (\ref{eq:temp1a}). 
Using Lemma~\ref{lem:quasi-1D}, (\ref{item:lem:quasi-1D-ii}) and then
Lemma~\ref{lem:quasi-1D}, (\ref{item:lem:quasi-1D-i}) with the observation 
$B_{(1+\varepsilon)r} \subset S_{\sqrt{d}(1+\varepsilon)r}$, we get 
\begin{align*}
\frac{1}{\varepsilon r} \|\mathcal{L}\mathcal{J}_h w\|_{L^2(B_{(1+\varepsilon) r}\setminus B_r)}
&\lesssim \frac{1}{\sqrt{\varepsilon r}\sqrt{(1+\varepsilon)r}} \|\mathcal{L}\mathcal{J}_h w\|_{L^2(B_{(1+\varepsilon)r})} \\
 &\phantom{\leq}\, + 
\frac{\sqrt{(1+\varepsilon)r}}{\sqrt{\varepsilon r}} \|\nabla \mathcal{L}\mathcal{J}_h w\|_{L^2(B_{(1+\varepsilon)r})}
 \\
&\lesssim \frac{1}{\sqrt{\varepsilon r}} 
\|\mathcal{L}\mathcal{J}_h w\|_{L^2(\Gamma)} + \frac{\sqrt{(1+\varepsilon)r}}{\sqrt{\varepsilon r}} 
\|\nabla \mathcal{L}\mathcal{J}_h w\|_{L^2(S_{\sqrt{d}(1+\varepsilon)r})}.  
\end{align*}
We have $\mathcal{L} \mathcal{J}_h w|_{\Gamma} = \mathcal{J}_h w$, and
(\ref{eq:temp1aa}) leads to
\begin{align}
\label{eq:temp2a}
\frac{1}{\varepsilon r} \|\mathcal{L}\mathcal{J}_h w\|_{L^2(B_{(1+\varepsilon) r}\setminus B_r)}
& \lesssim \frac{1}{\sqrt{\varepsilon r}} 
\|\mathcal{J}_h w\|_{L^2(\Gamma)} + h^{-1/2} \sqrt{1+\varepsilon^{-1}} \|\mathcal{J}_h w\|_{L^{2}(\Gamma)}.  
\end{align} 
We note that $\varepsilon r \ge 4h$ so that $1/\sqrt{\varepsilon r} \lesssim h^{-1/2}$. 
Inserting (\ref{eq:temp1aa}) and (\ref{eq:temp2a}) in (\ref{eq:temp1a})
and using 
the $L^2(\Gamma)$-stability of $\mathcal{J}_h$ given by
Lemma~\ref{lem:interpolator}, we obtain 
\begin{align*}
\|\nabla (\eta \mathcal{L}\mathcal{J}_h w)\|_{L^2(B_{(1+\varepsilon)r})} 
\lesssim h^{-1/2} \sqrt{1+\varepsilon^{-1}} \|\mathcal{J}_h w\|_{L^2(\Gamma)}
\lesssim h^{-1/2} \sqrt{1+\varepsilon^{-1}} \|w\|_{L^2(\Gamma)}. 
\end{align*}
Finally, inserting this bound into (\ref{eq:temp1}) and then into 
(\ref{eq:temp10}) allows us to conclude the proof of 
(\ref{eq:lem:estimateunjump-i}). 

{\bf Step 2:}
The bound 
(\ref{eq:lem:estimateunjump-ii}) is shown with the aid of 
(\ref{eq:lem:estimateunjump-i}) and a covering argument. 
We may assume that $B_R = (0,R)^d$. Set $r = \delta R$. 
Let $n \in \mathbb{N}$ be given by $n = \lceil R/r\rceil$. 
Let $x_i$, $i=1,\ldots,(n+1)^d=:N$ be the points of a regular 
grid in the closed box $\overline{B_R}$ with spacing $R/n$. 
For $i=1,\ldots,N$ consider the boxes $B_i:= x_i + (-r/2,r/2)^d$
as well as the scaled boxes 
$\widehat B_i:= x_i + (-r,r)^d$, $i = 1,\ldots,N$. 
The essential properties of these boxes are: 
first, the boxes $B_i$, $i =1,\ldots,N$ cover 
$B_{(1+\delta/2)R}$; secondly, the scaled boxes
$\widehat B_i$, $i=1,\ldots,N$ are contained in $B_{(1+\delta)R}$; thirdly, 
and most importantly, they have a finite overlap property
(with an overlap constant that depends solely on the spatial dimension $d$,
since the ratio of the spacing $R/n$ and the side length $r$ 
satisfies $r/(R/n) = (1/\delta)/ \lceil 1/\delta \rceil 
\in [1/2,1]$ for the case $\delta \in (0,1)$ under consideration here). 
Observing 
$\frac{h}{r} = \frac{h}{\delta R} \leq \frac{1}{4}$ due to our 
assumption $\frac{h}{R} \leq \frac{\delta}{4}$, the
estimate (\ref{eq:lem:estimateunjump-i}) implies for each $i$
\begin{equation*}
\|[\partial_{n}v]\|_{L^2(B_{i}\cap \Gamma)} 
\leq C h^{-1/2} 
\|\nabla v\|_{L^2(\widehat B_{i})}  . 
\end{equation*}
The desired estimate \eqref{eq:lem:estimateunjump-ii} 
follows from the covering and overlap properties. 
\end{proof}

For a box $B_R$ with side length $R$, we introduce the norm
\begin{equation*}
\triplenorm{v}_{h,R}^2 := \left(\frac{h}{R}\right)^2 \norm{\nabla v}^2_{L^2(B_R)} + 
\frac{1}{R^2}\norm{v}_{L^2(B_R)}^2,
\end{equation*}
which is, for fixed $h$, equivalent to the $H^1$-norm. 

Similarly as in \cite{BebendorfHackbusch,Boerm}, a main part of the proof is a Caccioppoli-type inequality, 
which is, for functions in $\mathcal{H}_{h,0}(B_{(1+\delta)R},\Gamma_{\rho})$, stated in the following lemma.

\begin{lemma}\label{lem:Caccioppoli}
Let $\delta \in (0,1)$ and 
$\frac{h}{R} \leq \frac{\delta}{16}$ and let $\Gamma_{\rho} \subset \Gamma$ be of the form \eqref{eq:Gammarho}.
Let $B_R$, $B_{(1+\delta)R}$ be two concentric boxes.
Then, there exists a constant $C > 0$ depending only on 
$\Omega$, $d$, and the $\gamma$-shape regularity
of the quasiuniform triangulation $\mathcal{T}_h$, such that 
for $v \in \mathcal{H}_{h,0}(B_{(1+\delta)R},\Gamma_{\rho})$
\begin{equation}\label{eq:caccioppoli}
\norm{\nabla v}_{L^2(B_{R})} \leq C\frac{1+\delta}{\delta}\triplenorm{v}_{h,(1+\delta)R}.
\end{equation}
\end{lemma}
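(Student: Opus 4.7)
The plan is a Caccioppoli-type energy argument, adapted to handle the boundary term arising from the jump $[\partial_n v]$ together with the BEM-orthogonality built into the space $\mathcal{H}_{h,0}(B_{(1+\delta)R},\Gamma_\rho)$. I would choose a smooth cutoff $\eta\in C^\infty_0(\mathbb{R}^d)$ with $\eta\equiv 1$ on $B_R$, $\operatorname{supp}\eta\subset B_{(1+\delta/2)R}$, and $\|\nabla\eta\|_{L^\infty}\lesssim 1/(\delta R)$. The assumption $h/R\leq\delta/16$ ensures that every element $T\in\mathcal T_h$ meeting $\operatorname{supp}\eta$ is contained in $B_{(1+\delta)R}$, so that all localised discrete objects produced below keep their supports inside the enlarged box.

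Testing the weak identity (\ref{eq:unjumpdef}) with $\varphi=\eta^2 v\in H_0^1(B_{(1+\delta)R})$ and using that $v$ is piecewise harmonic gives
\begin{equation*}
\int \eta^2|\nabla v|^2\,dx + 2\int \eta\,v\,\nabla\eta\cdot\nabla v\,dx = \skp{[\partial_n v],\eta^2 v}.
\end{equation*}
The volume cross term is handled by Young's inequality in the classical way, absorbing one half of $\int\eta^2|\nabla v|^2$ at the cost of $4\int v^2|\nabla\eta|^2\lesssim(\delta R)^{-2}\|v\|^2_{L^2(B_{(1+\delta)R})}$, which already matches the $R^{-2}\|v\|^2_{L^2}$ contribution to $\triplenorm{v}^2_{h,(1+\delta)R}$ up to the desired $((1+\delta)/\delta)^2$ factor.

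The crux is the boundary term. Writing $\widetilde v:=[\partial_n v]\in S^{0,0}(\mathcal T_h)$ (supported in $\overline{\Gamma_\rho}$), the key observation is that $\widetilde v\,\Pi^{L^2}(\eta^2)$ is piecewise constant with support in $\overline{\Gamma_\rho}\cap B_{(1+\delta)R}$, by the choice of $\eta$ together with $h/R\leq\delta/16$. The orthogonality in (\ref{eq:ApproxSpaceOrtho}) therefore yields $\skp{v,\widetilde v\,\Pi^{L^2}(\eta^2)}=0$, so that
\begin{equation*}
\skp{[\partial_n v],\eta^2 v} \,=\, \int_\Gamma \widetilde v\,\bigl(\eta^2-\Pi^{L^2}\eta^2\bigr)\,v\,ds.
\end{equation*}
Combining the elementwise oscillation bound $\|\eta^2-\Pi^{L^2}\eta^2\|_{L^\infty(T)}\lesssim h/(\delta R)$, Cauchy--Schwarz, Lemma~\ref{lem:estimateunjump} in the form $\|\widetilde v\|_{L^2(\Gamma\cap B_{(1+\delta/2)R})}\lesssim h^{-1/2}\|\nabla v\|_{L^2(B_{(1+\delta)R})}$, and a scale-explicit trace inequality $\|v\|^2_{L^2(\Gamma\cap B_{(1+\delta/2)R})}\lesssim (\delta R)^{-1}\|v\|^2_{L^2(B_{(1+\delta)R})} + \delta R\,\|\nabla v\|^2_{L^2(B_{(1+\delta)R})}$ (which follows from Lemma~\ref{lem:quasi-1D}(\ref{item:lem:quasi-1D-i}) applied in a tubular neighbourhood of $\Gamma\cap B_{(1+\delta)R}$ of width $\delta R$), I obtain an estimate of the form
\begin{equation*}
|\skp{[\partial_n v],\eta^2 v}| \,\lesssim\, \sqrt{h/(\delta R)}\;\|\nabla v\|^2_{L^2(B_{(1+\delta)R})} + (\delta R)^{-1}\sqrt{h/(\delta R)}\;\|\nabla v\|_{L^2}\|v\|_{L^2}.
\end{equation*}
Since $h/(\delta R)\leq 1/16$, the first summand can be absorbed on the left, and Young's inequality on the second produces the remaining contributions of $\triplenorm{v}^2_{h,(1+\delta)R}$ with constants of order $((1+\delta)/\delta)^2$, yielding (\ref{eq:caccioppoli}).

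The main obstacle is the careful bookkeeping in this boundary term: one must identify a piecewise-constant subtraction whose support lies inside $D\cap\overline{\Gamma_\rho}$ so that the orthogonality from (\ref{eq:ApproxSpaceOrtho}) is applicable, while the residual $\eta^2-\Pi^{L^2}\eta^2$ still carries the small factor $h/(\delta R)$. Because Lemma~\ref{lem:estimateunjump} costs a factor $h^{-1/2}$ and the trace inequality injects $\delta R$, the net power of $h/(\delta R)$ in the boundary estimate turns out to be exactly $1/2$, which is just enough, under the hypothesis $h/R\leq\delta/16$, to allow absorption into the left-hand gradient term.
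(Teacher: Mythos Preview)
Your overall strategy---cutoff, integration by parts, using the orthogonality (\ref{eq:ApproxSpaceOrtho}) to subtract a piecewise constant from the boundary term, and invoking Lemma~\ref{lem:estimateunjump}---is exactly the paper's approach. But there is a genuine gap in the final absorption step.

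You arrive at
\[
\tfrac{1}{2}\int \eta^2|\nabla v|^2 \;\lesssim\; \sqrt{h/(\delta R)}\,\|\nabla v\|_{L^2(B_{(1+\delta)R})}^2 + \text{(lower order)},
\]
and then claim that the first term on the right can be absorbed on the left because $h/(\delta R)\le 1/16$. This is false: the left-hand side controls only $\|\nabla v\|_{L^2(B_R)}^2$, whereas the right-hand term involves $\|\nabla v\|_{L^2(B_{(1+\delta)R})}^2$ over the \emph{strictly larger} box. No smallness of the prefactor helps here. The paper faces the same obstruction and resolves it not by absorption but by \emph{iteration}: Step~1 establishes an inequality of the form
\[
\|\nabla v\|_{L^2(B_R)}^2 \lesssim \frac{h}{\varepsilon R}\,\|\nabla v\|_{L^2(B_{(1+\varepsilon)R})}^2 + \frac{1}{(\varepsilon R)^2}\,\|v\|_{L^2(B_{(1+\varepsilon)R})}^2,
\]
and Step~2 applies this twice on nested boxes to upgrade the prefactor $h/(\varepsilon R)$ to $(h/(\delta R))^2$, which is precisely what the definition of $\triplenorm{\cdot}_{h,(1+\delta)R}$ requires.

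A secondary point: the paper gains a full factor $h/(\varepsilon R)$ (rather than your $\sqrt{h/(\delta R)}$) in the one-step estimate by subtracting $\Pi^{L^2}$ from \emph{both} factors in the boundary pairing, writing $\langle \eta^2[\partial_n v]-\Pi^{L^2}(\eta^2[\partial_n v]),\,\tilde\eta^{\,2} v - \Pi^{L^2}(\tilde\eta^{\,2} v)\rangle$ and then using $\|\tilde\eta^{\,2} v - \Pi^{L^2}(\tilde\eta^{\,2} v)\|_{L^2(\Gamma)}\lesssim h^{1/2}\|\tilde\eta^{\,2} v\|_{H^{1/2}(\Gamma)}$ together with the global trace inequality. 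Your single subtraction plus a scaled trace bound costs you half a power, so your version would need more iterations; but since you omit iteration altogether, the proof as written does not close. (Also, Lemma~\ref{lem:quasi-1D}(\ref{item:lem:quasi-1D-i}) bounds the volume norm by the trace, not the reverse; the scaled trace inequality you invoke is standard but is not that lemma.)
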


\begin{proof}
The proof of \eqref{eq:caccioppoli} is done in two steps.\\
{\bf Step 1:}
We show that for $\varepsilon>0$ with $\frac{h}{R}\leq \frac{\varepsilon}{8}$, the estimate
\begin{equation}\label{eq:caccioppoli1}
\|\nabla v\|^2_{L^2(B_{R})} 
\lesssim \frac{h}{\varepsilon R} \|\nabla v\|^2_{L^2(B_{(1+\varepsilon)R})} 
+ \frac{1}{(\varepsilon R)^2} \|v\|^2_{L^2(B_{(1+\varepsilon)R})}. 
\end{equation}
holds.
To see this, let $\eta$ be a smooth cut-off function with 
$\operatorname*{supp} \eta \subset B_{(1+\varepsilon/4)R}$ and $\eta \equiv 1$ on $B_{R}$, and 
$\norm{\nabla \eta}_{L^{\infty}(B_{(1+\varepsilon)R})}\lesssim \frac{1}{\varepsilon R}$. 
We will need a second smooth cut-off function $\widetilde \eta$ 
with $\operatorname*{supp} \widetilde \eta \subset B_{(1+\varepsilon)R}$ and 
$\widetilde \eta \equiv 1$ on $B_{(1+\varepsilon/2)R}$ and 
$\|\nabla \widetilde \eta\|_{L^\infty(B_{(1+\varepsilon)R})} \lesssim \frac{1}{\varepsilon R}$. 
Since $h$ is the maximal element diameter, $8h \leq \varepsilon R$ implies $T \subset B_{(1+\varepsilon/2)R}$ 
for all $T \in \mathcal{T}_h$ with $T \cap \operatorname*{supp} \eta  \neq \emptyset$.
Integration by parts, the fact that $v$ is piecewise harmonic and $\operatorname*{supp} ([\partial_{n}v] |_{B_{(1+\varepsilon)R} \cap \Gamma})
\subset \overline{\Gamma_{\rho}}$ lead to 
\begin{align}\label{eq:temp3}
\norm{\nabla(\eta v)}_{L^2(B_{(1+\varepsilon)R})}^2 &= 
\int_{B_{(1+\varepsilon)R}}\nabla(\eta v) \cdot \nabla(\eta v) \, dx \nonumber \\
 &= \int_{B_{(1+\varepsilon)R}}\nabla v \cdot \nabla(\eta^2 v) +  
v^2 \abs{\nabla\eta}^2  dx \nonumber \\
&= \int_{\Gamma\cap B_{(1+\varepsilon)R}}{\eta^2 [\partial_{n}v] v \, ds_x} + 
\int_{B_{(1+\varepsilon)R}}{v^2\abs{\nabla \eta}^2  dx} \nonumber \\
&= \int_{\Gamma}{\eta^2 [\partial_{n}v] v \, ds_x} +
\int_{B_{(1+\varepsilon)R}}{v^2\abs{\nabla \eta}^2  dx}, 
\end{align}
where in the last step we used the support property 
$\operatorname*{supp} \eta \subset B_{(1+\varepsilon/4)R}$ to extend 
the function $\eta^2 [\partial_{n}v] v$, which is defined on $\Gamma \cap B_{(1+\varepsilon)R}$, 
by zero to the whole set $\Gamma$. 
We first focus on the surface integral in \eqref{eq:temp3}. 
With the $L^2(\Gamma)$-orthogonal projection $\Pi^{L^2}$ onto $S^{0,0}(\mathcal{T}_h)$ from \eqref{eq:L2projection}, 
we get by definition of the space $\mathcal{H}_{h,0}(B_{(1+\varepsilon)R},\Gamma_{\rho})$
that $\operatorname*{supp} \Pi^{L^2}(\eta^2 [\partial_{n}v]) \subset \overline{\Gamma_{\rho}} \cap B_{(1+\varepsilon)R}$. Therefore,
we can use the orthogonality \eqref{eq:ApproxSpaceOrtho} satisfied by $v$ to get 
\begin{align}
\nonumber 
\langle \eta^2 [\partial_{n}v],v\rangle &= 
\langle \eta^2 [\partial_{n}v] - \Pi^{L^2} (\eta^2 [\partial_{n}v]),v\rangle \\
\nonumber &= 
\langle \eta^2 [\partial_{n}v] - \Pi^{L^2} (\eta^2 [\partial_{n}v]),\widetilde \eta^{\,2} v \rangle \\
\label{eq:lem:Caccioppoli-10}
&= \langle \eta^2 [\partial_{n}v] - \Pi^{L^2} (\eta^2 [\partial_{n}v]),
\widetilde \eta^{\,2} v - \Pi^{L^2} (\widetilde \eta^{\,2} v)\rangle, 
\end{align}
where we were able to insert the cut-off function $\widetilde \eta$ 
since $\widetilde \eta \equiv 1$ on 
$\operatorname*{supp} (\eta^2 [\partial_{n}v] - \Pi^{L^2} (\eta^2 [\partial_{n}v]))\subset
\overline{(B_{(1+\varepsilon/2)R}\setminus B_{R-2h}) \cap \Gamma_{\rho}}$. 
With these observations in hand, we estimate 
\begin{align*}
\|\eta^2 [\partial_{n}v] - \Pi^{L^2} (\eta^2 [\partial_{n}v]) \|^2_{L^2(\Gamma)} &\lesssim 
\sum_{T \in \mathcal{T}_h} h^2 \|\nabla_{\Gamma} (\eta^2 [\partial_{n}v])\|^2_{L^2(T)} \\
&\lesssim C \frac{h^2}{(\varepsilon R)^2} 
\|[\partial_{n}v]\|^2_{L^2((B_{(1+\varepsilon/2)R}) \cap \Gamma_{\rho})}. 
\end{align*}
The standard approximation property 
$\|\widetilde \eta^{\,2} v - \Pi^{L^2} (\widetilde \eta^{\,2} v)\|_{L^2(\Gamma)} \lesssim 
h^{1/2} \|\widetilde \eta^{\,2} v\|_{H^{1/2}(\Gamma)}$ and 
the bound (\ref{eq:lem:estimateunjump-ii}) of 
Lemma~\ref{lem:estimateunjump} as well as the trace inequality for $\Gamma$ give 
\begin{align}
\label{eq:lem:Caccioppoli-20}
&\left| \langle \eta^2 [\partial_{n}v],v\rangle \right| 
\lesssim 
\frac{h}{\varepsilon R} 
\|[\partial_{n}v]\|_{L^2((B_{(1+\varepsilon/2)R})\cap \Gamma_{\rho})} 
h^{1/2} \|\widetilde \eta^{\,2} v\|_{H^{1/2}(\Gamma)}\nonumber  \\
&\qquad\lesssim  \frac{h}{\varepsilon R} \|\nabla v\|_{L^2(B_{(1+\varepsilon)R})} 
\|\widetilde \eta^{\,2} v\|_{H^{1/2}(\Gamma)} 
\lesssim  \frac{h}{\varepsilon R} \|\nabla v\|_{L^2(B_{(1+\varepsilon)R})} 
\|\widetilde \eta^{\,2} v\|_{H^{1}(\Omega)} .
\end{align}
The properties of $\widetilde \eta$ imply 
$\|\widetilde \eta^{\,2} v\|_{H^{1}(\Omega)} 
\lesssim \|\nabla v\|_{L^2(B_{(1+\varepsilon)R})} + 
(\varepsilon R)^{-1} \|v\|_{L^2(B_{(1+\varepsilon)R})}$. 
Inserting this into (\ref{eq:lem:Caccioppoli-20}) and the result into 
(\ref{eq:temp3}) yields 
\begin{align*}
\|\nabla (\eta v)\|^2_{L^2(B_{(1+\varepsilon)R})} &\lesssim  
\frac{h}{\varepsilon R} \| \nabla v\|^2_{L^2(B_{(1+\varepsilon)R})} 
+ \frac{h}{(\varepsilon R)^2} \|\nabla v\|_{L^2(B_{(1+\varepsilon)R})} \|v\|_{L^2(B_{(1+\varepsilon)R})}\\ 
&\phantom{\leq}\, + \frac{1}{(\varepsilon R)^2} \|v\|^2_{L^2(B_{(1+\varepsilon)R})} \\
&\lesssim \frac{h}{\varepsilon R} \|\nabla v\|^2_{L^2(B_{(1+\varepsilon)R})} 
+ \frac{1}{(\varepsilon R)^2} \|v\|^2_{L^2(B_{(1+\varepsilon)R})}, 
\end{align*}
where we employed an appropriate Young inequality in the last step
and $h/(\varepsilon R) \leq 1$. 
This implies \eqref{eq:caccioppoli1}.

{\bf Step 2:}
Starting from estimate \eqref{eq:caccioppoli1} with $\varepsilon = \frac{\delta}{2}$, we
use \eqref{eq:caccioppoli1} again with $\varepsilon = \frac{\delta}{2+\delta}$ and 
$\widetilde{R} = (1+\delta/2)R$. Since 
$\left(1+\frac{\delta}{2}\right)\left(1+\frac{\delta}{2+\delta}\right) = 1+\delta$ and 
$\frac{h}{R}\leq\frac{\delta}{16}$ implies $\frac{h}{\widetilde{R}}\leq\frac{\varepsilon}{8}$, we
arrive at 
\begin{align*}
\|\nabla v\|^2_{L^2(B_{R})} 
&\lesssim \frac{h}{\delta R} \|\nabla v\|^2_{L^2(B_{(1+\delta/2)R})} 
+ \frac{1}{(\delta R)^2} \|v\|^2_{L^2(B_{(1+\delta/2)R})} \\
&\lesssim \left(\frac{h}{\delta R}\right)^2 \|\nabla v\|^2_{L^2(B_{(1+\delta)R})} 
+\left(\frac{h}{(\delta R)^3}+\frac{1}{(\delta R)^2}\right) \|v\|^2_{L^2(B_{(1+\delta)R})},
\end{align*}
and with $h/(\delta R) \leq 1$ we conclude the proof.
\end{proof}

\subsection{Low-dimensional approximation in ${\mathcal H}_{h,0}(D,\Gamma_{\rho})$}
 
Since $\mathcal{H}_{h,0}(B_R,\Gamma_{\rho})\subset H^1(B_R)$ is 
a closed subspace by Lemma~\ref{lem:closedsubspace}, the orthogonal projection 
$\Pi_{h,R} : (H^1(B_R),\triplenorm{\cdot}_{h,R}) \rightarrow (\mathcal{H}_{h,0}(B_R,\Gamma_{\rho}), \triplenorm{\cdot}_{h,R})$ 
is well-defined.

\begin{lemma}\label{lem:lowdimapp}
Let $\delta \in (0,1)$, $R>0$ such that $\frac{h}{R}\leq \frac{\delta}{16}$ and
$B_R$, $B_{(1+\delta)R}$, $B_{(1+2\delta)R}$ be concentric boxes. Let
$\Gamma_{\rho} \subset \Gamma$ be of the form \eqref{eq:Gammarho} 
and $v\in\mathcal{H}_{h,0}(B_{(1+2\delta)R},\Gamma_{\rho})$. 
Let $\mathcal{K}_H $ be an (infinite) $\gamma$-shape regular triangulation of $\mathbb{R}^d$ 
of mesh width $H$ and assume $\frac{H}{R} \leq \frac{\delta}{4}$ for the corresponding mesh width $H$.
Let $I_H: H^1(\mathbb{R}^d) \rightarrow S^{1,1}(\mathcal{K}_H)$ be the Scott-Zhang projection. 
Then, there exists a constant $C_{\rm app} > 0$ that depends only on 
$\Omega$, $d$, and $\gamma$, such that
\begin{enumerate}
\def\theenumi{\roman{enumi}}
\item 
\label{item:lem:lowdimapp-ii}
$\big(v-\Pi_{h,R}I_H v\big)|_{B_{R}} \in \mathcal{H}_{h,0}(B_{R},\Gamma_{\rho})$; \\[-2mm]
\item 
\label{item:lem:lowdimapp-i}
$\triplenorm{v-\Pi_{h,R}I_H v}_{h,R} \leq C_{\rm app}\frac{1+2\delta}{\delta} 
\left(\frac{h}{R}+\frac{H}{R}\right)\triplenorm{v}_{h,(1+2\delta)R}$; 
\item 
\label{item:lem:lowdimapp-iii}
$\dim W\leq C_{\rm app}\left(\frac{(1+2\delta)R}{H}\right)^d$, 
where $W:=\Pi_{h,R}I_H \mathcal{H}_{h,0}(B_{(1+2\delta)R},\Gamma_{\rho}) $. 
\end{enumerate}
\end{lemma}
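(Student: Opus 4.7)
The plan is to dispatch the three assertions in order; the main work is (\ref{item:lem:lowdimapp-i}). Part (\ref{item:lem:lowdimapp-ii}) is essentially by definition: the conditions defining $\mathcal{H}_{h,0}(\cdot,\Gamma_{\rho})$---piecewise harmonicity, localization of the jump of the normal derivative to $\overline{\Gamma_{\rho}}$, and $L^2(\Gamma)$-orthogonality against $S^{0,0}(\mathcal{T}_h)$-functions supported in the corresponding box---are all preserved under restriction, so $v|_{B_R}\in\mathcal{H}_{h,0}(B_R,\Gamma_{\rho})$; since $\Pi_{h,R}I_Hv$ lies in the same linear space by construction, so does the difference.

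For part (\ref{item:lem:lowdimapp-i}), the decisive observation is that $\Pi_{h,R}$ is the $\triplenorm{\cdot}_{h,R}$-orthogonal projection onto the closed subspace $\mathcal{H}_{h,0}(B_R,\Gamma_{\rho})$ (Lemma~\ref{lem:closedsubspace}). Because $v|_{B_R}$ already lies in this subspace, $\Pi_{h,R}v=v|_{B_R}$, and the contractivity of an orthogonal projection yields
\begin{equation*}
\triplenorm{v-\Pi_{h,R}I_Hv}_{h,R}
=\triplenorm{\Pi_{h,R}(v-I_Hv)}_{h,R}
\leq \triplenorm{v-I_Hv}_{h,R}.
\end{equation*}
The standard local Scott-Zhang bounds on the coarse mesh $\mathcal{K}_H$ (applied patchwise over the elements of $\mathcal{K}_H$ meeting $B_R$) give, for a patch constant $c=c(\gamma,d)$,
\begin{equation*}
\norm{v-I_Hv}_{L^2(B_R)}+H\norm{\nabla(v-I_Hv)}_{L^2(B_R)}\lesssim H\norm{\nabla v}_{L^2(B_{R+cH})},
\end{equation*}
from which $\triplenorm{v-I_Hv}_{h,R}\lesssim \frac{h+H}{R}\norm{\nabla v}_{L^2(B_{R+cH})}$. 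The assumption $H/R\leq\delta/4$ places $B_{R+cH}$ inside $B_{(1+\delta/2)R}$ (after absorbing $c$ into the constant). The final step is to invoke the Caccioppoli inequality (Lemma~\ref{lem:Caccioppoli}) on the pair of concentric boxes $B_{(1+\delta/2)R}\subset B_{(1+2\delta)R}$; the hypothesis $h/R\leq\delta/16$ is exactly what is needed to apply it, and it contributes the factor $(1+2\delta)/\delta$ in front of $\triplenorm{v}_{h,(1+2\delta)R}$.

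Part (\ref{item:lem:lowdimapp-iii}) is a counting argument: $(I_Hv)|_{B_R}$ lies in the span of those $S^{1,1}(\mathcal{K}_H)$-hat functions whose supports meet $B_R$, and the $\gamma$-shape regularity of $\mathcal{K}_H$ together with $H/R\leq\delta/4$ bounds their number by $\mathcal{O}\big(((1+2\delta)R/H)^d\big)$. Since $W$ is the image of this space under $\Pi_{h,R}$, the same bound applies.

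The principal technical obstacle is the bookkeeping in (\ref{item:lem:lowdimapp-i}): the Scott-Zhang patch enlargement $R\to R+cH$ must be chained with the Caccioppoli enlargement $(1+\delta/2)R\to(1+2\delta)R$ so that every intermediate box fits inside $B_{(1+2\delta)R}$ while the three hypotheses on $h/R$, $H/R$, and $\delta$ remain simultaneously valid. This chaining is precisely where the factor $(1+2\delta)/\delta$ emerges, via Caccioppoli's $(1+\delta')/\delta'$ factor evaluated at the shrunken parameter $\delta'$ adapted to the inner pair of boxes.
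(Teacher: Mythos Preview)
Your proposal is correct and follows essentially the same route as the paper: restriction for (\ref{item:lem:lowdimapp-ii}), projection identity $\Pi_{h,R}v=v|_{B_R}$ plus contractivity followed by Scott--Zhang approximation and then Caccioppoli for (\ref{item:lem:lowdimapp-i}), and a volume count for (\ref{item:lem:lowdimapp-iii}). The only cosmetic difference is the choice of intermediate box in the chaining: the paper places the Scott--Zhang patch inside $B_{(1+\delta)R}$ (not $B_{(1+\delta/2)R}$) and then applies Lemma~\ref{lem:Caccioppoli} with $\widetilde R=(1+\delta)R$, $\widetilde\delta=\delta/(1+\delta)$, which cleanly verifies $h/\widetilde R\le\widetilde\delta/16$ and yields the factor $(1+\widetilde\delta)/\widetilde\delta=(1+2\delta)/\delta$ without needing to ``absorb $c$ into the constant.''
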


\begin{proof}
Since $v \in \mathcal{H}_{h,0}(B_{(1+2\delta)R},\Gamma_{\rho})$ implies 
$v \in \mathcal{H}_{h,0}(B_{R},\Gamma_{\rho})$, we have that \linebreak
$\Pi_{h,R}\left(v|_{B_{R}}\right) = v|_{B_{R}}$, which proves (\ref{item:lem:lowdimapp-ii}). 

The assumption $\frac{H}{R} \leq \frac{\delta}{4}$ implies 
$\bigcup\{K \in \mathcal{K}_H \;:\; K \cap B_{R} \neq \emptyset\} \subset B_{(1+\delta)R}$.
Then, the locality and approximation properties \eqref{eq:SZapprox} of the Scott-Zhang projection $I_H$ yield
\begin{align*}
\frac{1}{H} \norm{v - I_H v}_{L^2(B_{R})} + 
\norm{\nabla(v - I_H v)}_{L^2(B_{R})} &\lesssim  \norm{\nabla v}_{L^2(B_{(1+\delta)R})}.
\end{align*}
We apply Lemma~\ref{lem:Caccioppoli} with
$\widetilde{R} = (1+\delta)R$ and $\widetilde{\delta} = \frac{\delta}{1+\delta}$. 
Note that $(1+\widetilde{\delta})\widetilde{R} = (1+2\delta)R$.
Moreover, $16h \leq \delta R = \widetilde{\delta}\widetilde{R}$ implies 
$\frac{h}{\widetilde{R}}\leq \frac{\widetilde{\delta}}{16}$. Therefore, we get
\begin{align*}
&\triplenorm{v-\Pi_{h,R}I_H v}_{h,R}^2 =\triplenorm{\Pi_{h,R}\left(v-I_H v\right)}^2_{h,R} \leq 
\triplenorm{v-I_H v}_{h,R}^2 \\
& \qquad= \left(\frac{h}{R}\right)^{2}\norm{\nabla (v-I_H v)}_{L^2(B_{R})}^2   + 
\frac{1}{R^2} \norm{v-I_H v}_{L^2(B_{R})}^2\\
&\qquad\lesssim\frac{h^2}{R^2}\norm{\nabla v}_{L^{2}(B_{(1+\delta) R})}^2 + 
\frac{H^2}{R^2}\norm{\nabla v}_{L^2(B_{(1+\delta)R})}^2\\
  &\qquad\lesssim  \left(\frac{1+\delta}{\delta}\left(\frac{h}{R}+\frac{H}{R}\right)\right)^2\triplenorm{v}^2_{h,(1+2\delta)R},
\end{align*}
which concludes the proof of (\ref{item:lem:lowdimapp-i}).
Finally, the statement (\ref{item:lem:lowdimapp-iii}) follows from the fact that 
$\dim I_H(\mathcal{H}_{h,0}(B_{(1+2\delta)R},\Gamma_{\rho})) \lesssim ((1+2\delta)R/H)^d$. 
\end{proof}

The property (\ref{item:lem:lowdimapp-ii}) of Lemma~\ref{lem:lowdimapp} can be used to iterate
the approximation result (\ref{item:lem:lowdimapp-i}) on suitable concentric boxes. 
This will allow us to construct a subspace of $ \mathcal{H}_{h,0}(B_{(1+\kappa)R},\Gamma_{\rho})$ 
for $\kappa \in (0,1)$ with the capability to approximate at an exponential rate. 

\begin{lemma}\label{cor:lowdimapp}
Let $C_{\rm app}$ be the constant of Lemma~\ref{lem:lowdimapp}.
Let $q,\kappa \in (0,1)$, $R > 0$, $k \in \mathbb{N}$ 
and $\Gamma_{\rho} \subset \Gamma$ be of the form \eqref{eq:Gammarho}. Assume 
\begin{equation}
\label{eq:cor:lowdimapp-1}
\frac{h}{R} \leq \frac{\kappa q} {64 k \max\{1,C_{\rm app}\}}.
\end{equation}
Then, there exists a finite dimensional subspace $\widehat{W}_k$ of $\mathcal{H}_{h,0}(B_{(1+\kappa)R},\Gamma_{\rho})$ 
with dimension 
\begin{equation*}
\dim \widehat{W}_k \leq C_{\rm dim} \left(\frac{1 + \kappa^{-1}}{q}\right)^dk^{d+1}
\end{equation*}
such that for every $v \in \mathcal{H}_{h,0}(B_{(1+\kappa)R},\Gamma_{\rho})$ 
\begin{align}
\label{eq:lowdimapp}
&\min_{\widehat{w} \in \widehat{W}_k} \sqrt{h} \norm{[\partial_{n}v] - [\partial_n \widehat{w}]}_{L^2(B_R\cap\Gamma_{\rho})}
 \leq
\min_{\widehat{w} \in \widehat{W}_k} \sqrt{h} \norm{[\partial_{n}v] - [\partial_n \widehat{w}]}_{L^2(B_R\cap\Gamma)} 
 \\
&\qquad \qquad\leq  C_{\rm low}\frac{R}{h} \min_{\widehat{w}\in \widehat{W}_k} \triplenorm{v-\widehat{w}}_{h,(1+\kappa/2)R} \leq 
C_{\rm low}\frac{R}{h} q^{k} \triplenorm{v}_{h,(1+\kappa)R}.
\nonumber 
\end{align}
The constants $C_{\rm dim}$, $C_{\rm low}>0$ depend only on $\Omega$, $d$, and the 
$\gamma$-shape regularity of the quasiuniform triangulation $\mathcal{T}_h$.
\end{lemma}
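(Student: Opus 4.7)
The plan is to iterate Lemma~\ref{lem:lowdimapp} on a telescoping sequence of concentric boxes whose radii decrease from $(1+\kappa)R$ down to $(1+\kappa/2)R$ in $k$ equal steps, and then to convert the resulting $\triplenorm{\cdot}_{h,R_0}$-estimate into the claimed $L^2$-estimate for $[\partial_n]$ by a single application of Lemma~\ref{lem:estimateunjump}. Specifically, I set $R_j := (1+\kappa/2 + j\kappa/(2k))R$ for $j=0,\ldots,k$ so that $R_k = (1+\kappa)R$ and $R_0 = (1+\kappa/2)R$. At step $j$ I apply Lemma~\ref{lem:lowdimapp} to the outer/inner pair $B_{R_{k-j+1}} \supset B_{R_{k-j}}$, which corresponds to the choice $\delta_j$ satisfying $(1+2\delta_j)R_{k-j} = R_{k-j+1}$; a short computation gives $\delta_j \asymp \kappa/k$ uniformly in $j$, and in particular $\tfrac{1+2\delta_j}{\delta_j} \lesssim k/\kappa$.

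The iteration proceeds as follows. Set $v_0 := v \in \mathcal{H}_{h,0}(B_{R_k},\Gamma_\rho)$. Given $v_{j-1} \in \mathcal{H}_{h,0}(B_{R_{k-j+1}},\Gamma_\rho)$, Lemma~\ref{lem:lowdimapp} applied with background mesh width $H_j$ yields $w_j := \Pi_{h,R_{k-j}}I_{H_j} v_{j-1}$, and by property~(\ref{item:lem:lowdimapp-ii}) the residual $v_j := v_{j-1}|_{B_{R_{k-j}}} - w_j$ lies again in $\mathcal{H}_{h,0}(B_{R_{k-j}},\Gamma_\rho)$, so the iteration closes. Choosing $H_j = c_0\,\kappa q\,R_{k-j}/k$ with $c_0$ small enough (depending only on $C_{\rm app}$), and combining with $h/R \leq \kappa q/(64k\max\{1,C_{\rm app}\})$, the per-step contraction factor $C_{\rm app}\tfrac{1+2\delta_j}{\delta_j}(h/R_{k-j}+H_j/R_{k-j})$ is bounded by $q$. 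Iterating gives
\begin{equation*}
\triplenorm{v-\widehat w}_{h,R_0} \leq q^k \triplenorm{v}_{h,R_k},
\qquad \widehat w := \textstyle\sum_{j=1}^k w_j,
\end{equation*}
and $\widehat w$ lies in $\widehat W_k := W_1+\cdots+W_k$ where $W_j$ is the range of $\Pi_{h,R_{k-j}}I_{H_j}$ restricted to $\mathcal{H}_{h,0}(B_{R_{k-j+1}},\Gamma_\rho)$. Lemma~\ref{lem:lowdimapp}(\ref{item:lem:lowdimapp-iii}) together with $R_{k-j+1}/H_j \lesssim k/(\kappa q)$ yields $\dim W_j \lesssim (k/(\kappa q))^d$, so $\dim \widehat W_k \lesssim k^{d+1}/(\kappa q)^d \lesssim ((1+\kappa^{-1})/q)^d k^{d+1}$ as required.

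For the $L^2$-bound on the normal jump, I apply Lemma~\ref{lem:estimateunjump} to $v-\widehat w \in \mathcal{H}_{h,0}(B_{R_0},\Gamma_\rho) \subset \mathcal{H}_h(B_{R_0})$ with the concentric pair $B_R \subset B_{(1+\kappa/2)R} = B_{R_0}$ (so $\delta = \kappa/2$); the hypothesis $h/R \leq \kappa/8 = \delta/4$ is satisfied by~(\ref{eq:cor:lowdimapp-1}). Using $B_R \subset B_{(1+\kappa/4)R}$ on the left and the trivial bound $\|\nabla(v-\widehat w)\|_{L^2(B_{R_0})} \leq (R_0/h)\,\triplenorm{v-\widehat w}_{h,R_0} \leq (2R/h)\,\triplenorm{v-\widehat w}_{h,R_0}$, I obtain
\begin{equation*}
\sqrt{h}\,\|[\partial_n(v-\widehat w)]\|_{L^2(B_R\cap\Gamma)} \lesssim (R/h)\,\triplenorm{v-\widehat w}_{h,(1+\kappa/2)R},
\end{equation*}
which combined with the contraction estimate above yields the full chain in~\eqref{eq:lowdimapp}; the first inequality in~\eqref{eq:lowdimapp} is the trivial monotonicity $\Gamma_\rho \cap B_R \subset \Gamma \cap B_R$. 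The main bookkeeping difficulty, and the only place where the precise form of hypothesis~(\ref{eq:cor:lowdimapp-1}) matters, is in verifying that the per-step factor $C_{\rm app}\tfrac{1+2\delta_j}{\delta_j}(h/R_{k-j}+H_j/R_{k-j}) \leq q$ holds uniformly in $j$ despite the $1/\delta_j \asymp k/\kappa$ blow-up, which is why the factor $k$ appears in the denominator of the smallness condition on $h/R$.
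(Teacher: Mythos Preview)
Your proposal is correct and follows essentially the same approach as the paper: iterate Lemma~\ref{lem:lowdimapp} on $k$ nested concentric boxes shrinking from $(1+\kappa)R$ to $(1+\kappa/2)R$ with a coarse mesh width $H\asymp \kappa q R/k$ to obtain the $q^k$-contraction in the $\triplenorm{\cdot}$-norm, then apply Lemma~\ref{lem:estimateunjump} once to pass to the $L^2$-norm of $[\partial_n]$. The only cosmetic differences are your parametrization by radii $R_j$ rather than by increments $\delta_j$ and your use of a step-dependent $H_j$ in place of the paper's fixed $H$; neither affects the argument.
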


\begin{proof}
Let $B_R$ and $B_{(1+\delta_j)R}$, 
with $\delta_j := \kappa(1-\frac{j}{2k})$ for $j=0,\dots,k$ be concentric boxes. 
We have $\kappa = \delta_0>\delta_1 > \dots >\delta_{k} = \frac{\kappa}{2}$.
In the following, we iterate the approximation result of Lemma~\ref{lem:lowdimapp} 
on the boxes $B_{(1+\delta_j)R}$. 
Choosing $H = \frac{\kappa q R}{64k\max\{C_{\rm app},1\}}$, we have $h \leq H$. 
We apply Lemma~\ref{lem:lowdimapp} with 
$\widetilde{R}_j = (1+\delta_j)R$ and 
$\widetilde{\delta}_j = \frac{\kappa}{4k(1+\delta_j)}<\frac{1}{4}$.
Note that $\delta_{j-1} = \delta_j +\frac{\kappa}{2k}$ gives 
$(1+\delta_{j-1})R=(1+2\widetilde{\delta}_j)\widetilde{R}_j$ and 
our choice of $H$ implies 
$\frac{H}{\widetilde{R}_j}\leq \frac{\widetilde{\delta}_j}{4}$. 
Hence, for $j=1$, Lemma~\ref{lem:lowdimapp} provides an approximation $w_1$ in a subspace
 $W_1$ of $\mathcal{H}_{h,0}(B_{(1+\delta_1)R},\Gamma_{\rho})$ 
with $\dim W_1 \leq C_{\rm app}\left(\frac{(1+2\widetilde{\delta}_1)\widetilde{R}_1}{H}\right)^d 
= C_{\rm app}\left(\frac{(1+\kappa)R}{H}\right)^d$, 
satisfying
\begin{align*}
\triplenorm{v-w_1}_{h,(1+\delta_1)R}  &= \triplenorm{v-w_1}_{h,\widetilde{R}_1} \leq
C_{\rm app}{\frac{1+2\widetilde{\delta}_1}{\widetilde{\delta}_1}}
\left(\frac{h}{\widetilde{R}_1}+\frac{H}{\widetilde{R}_1}\right)
 \triplenorm{v}_{h,(1+2\widetilde{\delta}_1)\widetilde{R}_1} \\
&\leq 2C_{\rm app}\frac{H}{(1+\delta_1)R}{\frac{1+2\widetilde{\delta}_1}{\widetilde{\delta}_1}} 
\triplenorm{v}_{h,(1+\delta_0)R}\\
 &= 8C_{\rm app}\frac{k H}{\kappa R}(1+2\widetilde{\delta}_1)\triplenorm{v}_{h,(1+\kappa)R} 
\leq  q\triplenorm{v}_{h,(1+\kappa)R}.
\end{align*}
Since $v|_{B_{(1+\delta_1)R}}-w_1 \in \mathcal{H}_{h,0}(B_{(1+\delta_1)R},\Gamma_{\rho})$, 
Lemma~\ref{lem:lowdimapp} can be applied to $v-w_1$,
and provides an approximation $w_2$ of $v-w_1$ in a subspace $W_2$ 
of $\mathcal{H}_{h,0}(B_{(1+\delta_2)R},\Gamma_{\rho})$ with
$\dim W_2\leq C_{\rm app}\left(\frac{(1+\kappa)R}{H}\right)^d$. Arguing as for $j=1$, we get
\begin{equation*}
\triplenorm{v-w_1-w_2}_{h,(1+\delta_2)R} \leq q \triplenorm{v-w_1}_{h,(1+\delta_1)R} \leq  
q^2 \triplenorm{v}_{h,(1+\kappa)R}.
\end{equation*}
Continuing this process $k-2$ times, one obtains an approximation $\widehat{w} := \sum_{j=1}^kw_i$ 
in the space $\widehat{W}_k := \sum_{j=1}^{k}W_j$ 
of dimension $\dim \widehat{W}_k \leq C_{\rm app}k\left(\frac{(1+\kappa)R}{H}\right)^d \leq
C_{\rm dim} ((1+\kappa^{-1}) q^{-1})^dk^{d+1}$ with
\begin{equation*}
\triplenorm{v-\widehat{w}}_{h,(1+\kappa/2)R} = \triplenorm{v-\widehat{w}}_{h,(1+\delta_k)R}
\leq q^k \triplenorm{v}_{h,(1+\kappa)R}.
\end{equation*} 
Finally, since \eqref{eq:cor:lowdimapp-1} ensures $h/R \leq \kappa/8$, we may use 
Lemma~\ref{lem:estimateunjump} to estimate 
\begin{equation*}\sqrt{h} \|[\partial_{n}v] - [\partial_n \widehat w]\|_{L^2(B_R \cap \Gamma)} 
\leq C\norm{\nabla(v-\widehat{w})}_{L^2(B_{(1+\kappa/2)R})}
\leq C \frac{R}{h} \triplenorm{v - \widehat w}_{h,(1+\kappa/2)R}\end{equation*} to conclude the argument. 
\end{proof}

Now we are able to prove the main result of this section, Proposition~\ref{thm:function-approximation}.

\begin{proof}[Proof of Proposition~\ref{thm:function-approximation}]
Due to the admissibility condition and the
definition of bounding boxes, we have 
$\operatorname{dist}(B_{R_\tau},B_{R_\sigma}) 
\ge \eta^{-1} \operatorname*{diam} (B_{R_\tau}) =  \eta^{-1}\sqrt{d} R_{\tau}$. 
The choice $\kappa = \frac{1}{1+\eta}$ implies 
\begin{align*}
\operatorname{dist}(B_{(1+\kappa) R_\tau},B_{R_\sigma}) &\geq 
\operatorname{dist}(B_{R_\tau},B_{R_\sigma}) - \kappa R_{\tau} \sqrt{d} \\
 &\geq \sqrt{d}R_{\tau}(\eta^{-1}-\kappa) = 
\sqrt{d}R_{\tau}\left(\frac{1}{\eta} - \frac{1}{1+\eta}\right) >0.
\end{align*}
The potential $u = \widetilde{V}\phi_h$ with the Galerkin solution $\phi_h \in S^{0,0}(\mathcal{T}_h)$ 
of (\ref{eq:model}) then satisfies 
$u \in \mathcal{H}_{h,0}(B_{(1+\kappa) R_\tau},\Gamma)$. 
The inverse estimate $\sqrt{h}\norm{\phi_h}_{L^2(\Gamma)} \lesssim \norm{\phi_h}_{H^{-1/2}(\Gamma)}$ 
of 
\eqref{eq:inverse-a}
and the ellipticity of the simple-layer operator
 as well as the discrete boundary integral equation \eqref{eq:model} provide
\begin{align*}
\norm{\phi_h}_{H^{-1/2}(\Gamma)}^2 &\lesssim  \skp{V\phi_h,\phi_h} =  \skp{f,\phi_h}
= \skp{\Pi^{L^2} f, \phi_h}
 \lesssim \norm{\Pi^{L^2}f}_{L^2(\Gamma)}\norm{\phi_h}_{L^2(\Gamma)} \\
&\lesssim h^{-1/2}\norm{\Pi^{L^2}f}_{L^2(\Gamma)}\norm{\phi_h}_{H^{-1/2}(\Gamma)}.
\end{align*}
Then, the boundedness of $\widetilde{V}: H^{-1/2}(\Gamma) \rightarrow H^1_{\text{loc}}(\mathbb{R}^d)$ and $\frac{h}{R_{\tau}}<1$ lead to
\begin{align*}
\triplenorm{\widetilde{V} \phi_h}_{h,R_{\tau}(1+\kappa)} 
&\leq 2\left(1+\frac{1}{R_\tau}\right)\norm{\widetilde{V} \phi_h}_{H^1(B_{2R_{\tau}})} 
\lesssim \left(1+\frac{1}{R_\tau}\right) \norm{\phi_h}_{H^{-1/2}(\Gamma)} \\
&\lesssim  \left(1+\frac{1}{R_\tau}\right) h^{-1/2}\norm{\Pi^{L^2}f}_{L^2(\Gamma)}.
\end{align*}
After these preparations, we are in a position to define the space $W_k$, for which we distinguish two cases. 

{\bf Case 1:} The condition \eqref{eq:cor:lowdimapp-1} is satisfied with $R = R_{\tau}$. 

With the space $\widehat{W}_k$ provided by Lemma~\ref{cor:lowdimapp} we set 
$W_k := \{[\partial_n \widehat{w}] : \widehat{w} \in \widehat{W}_k\}$. 
Then, Lemma~\ref{cor:lowdimapp} and $R_{\tau}\leq 2\operatorname*{diam}(\Omega)$ lead to
 \begin{align*}
\min_{w\in W_k}\norm{\phi_h-w}_{L^2(B_{R_{\tau}}\cap \Gamma)} &\lesssim  
\frac{R_{\tau}}{h^{3/2}} q^k \triplenorm{\widetilde{V}\phi_h}_{h,(1+\kappa)R_{\tau}} \lesssim 
(R_{\tau}+1)h^{-2}q^k\norm{\Pi^{L^2}f}_{L^2(\Gamma)} \\
 &\lesssim h^{-2} q^k \norm{\Pi^{L^2}f}_{L^2(\Gamma)},
\end{align*}
and the dimension of $W_k$ is bounded by 
\begin{equation*}
\dim W_k \leq C_{\rm dim} \left(\frac{1+\kappa^{-1}}{q}\right)^d k^{d+1} = C_{\rm dim} (2 + \eta)^d q^{-d} k^{d+1}. 
\end{equation*}
{\bf Case 2:} The condition \eqref{eq:cor:lowdimapp-1} is not satisfied. 

Then, we select 
$W_k:= \left\{w|_{B_{R_\tau}\cap \Gamma} : w \in S^{0,0}({\mathcal T}_h)\right\}$ and 
the minimum in \eqref{eq:thm:function-approximation-1} is obviously zero. 
By the choice of $\kappa$ and $\frac{h}{R}>\frac{\kappa q}{64k\max\{1,C_{\rm app}\}}$, 
the dimension of $W_k$ is bounded by 
\begin{align*}
\dim W_k \lesssim \left(\frac{R_{\tau}}{h}\right)^{d-1} 
&\lesssim \left(\frac{64 k \max\{C_{\rm app},1\}}{\kappa q}\right)^{d-1} 
\simeq \left((1+\eta) q^{-1} k \right)^{d-1} \\
&\lesssim (2+\eta)^d q^{-d} k^{d+1}. 
\end{align*}
This concludes the proof of the first inequality in \eqref{eq:thm:function-approximation-1}.  
The second inequality in \eqref{eq:thm:function-approximation-1} follows 
from the $L^2(\Gamma)$-stability of the $L^2(\Gamma)$-orthogonal projection.
\end{proof}

\section{Proof of the approximation results for ${\mathbf V}^{-1}$}
\label{sec:H-matrix-approximation}
In this section, the approximation result given in Proposition~\ref{thm:function-approximation} 
is used to construct a low-rank approximation of a matrix block 
$\mathbf{V}^{-1}|_{\tau\times\sigma}$ and in turn an $\mathcal{H}$-matrix approximation 
of $\mathbf{V}^{-1}$. This is achieved with local variants of the isomorphism \eqref{eq:isomorphism}, 
and our arguments follow the lines of \cite[Theorem~{2}]{Boerm}. 

\begin{proof}[Proof of Theorem~\ref{th:blockapprox}]
If $C_{\rm dim} (2+\eta)^d q^{-d} k^{d+1} \geq \min (\abs{\tau},\abs{\sigma})$, we use the exact matrix block 
$\mathbf{X}_{\tau\sigma}=\mathbf{V}^{-1}|_{\tau \times \sigma}$ and 
$\mathbf{Y}_{\tau\sigma} = \mathbf{I} \in \mathbb{R}^{\abs{\sigma}\times\abs{\sigma}}$. 

If $C_{\rm dim}(2+\eta)^d q^{-d} k^{d+1}  < \min (\abs{\tau},\abs{\sigma})$, we employ the approximation
result of Proposition~\ref{thm:function-approximation} in the following way.
For the cluster $\tau \subset \mathcal{I}$, we define \linebreak
$\mathbb{R}^{\tau} := \left\{\mathbf{x} \in\mathbb{R}^N : x_i = 0\, \forall i \notin \tau\right\}$ 
and the mappings
 \begin{equation*}
\Phi_{\tau}: \mathbb{R}^{\tau} \rightarrow S^{0,0}(\mathcal{T}_h),\quad \mathbf{x} \mapsto \sum_{j\in\tau} x_j \chi_j, \quad \text{and} \quad
 \Lambda_{\tau} : L^2(\Gamma) \rightarrow \mathbb{R}^{\abs{\tau}},\quad w \mapsto (\overline{w}_i)_{i \in \tau},
\end{equation*}
 where $\overline{w}_i$ denotes the mean value on the element $T_i \in \mathcal{T}_h$. 
Hence, for a piecewise constant function the mapping $\Lambda_{\tau}$ returns 
the constant value on each element corresponding to the cluster $\tau$. 
Moreover, $\Phi_{\tau}\Lambda_{\tau}$ is the restriction of the $L^2$-projection
onto $S^{0,0}(\mathcal{T}_h)$ to 
$\Gamma_{\tau}:=\text{interior}\left(\bigcup_{i \in \tau} \overline{T_i}\right) \subset B_{R_{\tau}}$. Thus, in particular, 
for a piecewise constant function $\widetilde{\phi}\in S^{0,0}(\mathcal{T}_h)$ we get 
$\Phi_{\tau}(\Lambda_{\tau} \widetilde{\phi}) = \widetilde{\phi}|_{\Gamma_{\tau}}$. 
For $\mathbf{x} \in \mathbb{R}^{\tau}$, \eqref{eq:isomorphism} implies
\begin{equation*}
Ch^{d/2}\norm{\mathbf{x}}_{2} \leq \norm{\Phi_\tau(\mathbf{x})}_{L^2(\Gamma)} \leq 
\widetilde{C} h^{d/2}\norm{\mathbf{x}}_{2}, \quad \mathbf{x} \in \mathbb{R}^{{\tau}}. 
\end{equation*}
The adjoint $\Lambda_{\mathcal{I}}^* : \mathbb{R}^N \rightarrow L^2(\Gamma)'\simeq L^2(\Gamma), 
\mathbf{b} \mapsto \sum_{i\in \mathcal{I}} b_i (u \mapsto \overline{u}_i)$ 
of $\Lambda_{\mathcal{I}}$ satisfies, because of \eqref{eq:isomorphism} and the $L^2$-stability of $\Phi_{\mathcal{I}} \Lambda_{\mathcal{I}}$,
\begin{align*}
\norm{\Lambda_{\mathcal{I}}^* \mathbf{b}}_{L^2(\Gamma)} &= \sup_{w \in L^2(\Gamma)}
\frac{\skp{\mathbf{b},\Lambda_{\mathcal{I}} w}_2}{\norm{w}_{L^2(\Gamma)}} 
  \lesssim \norm{\mathbf{b}}_2 \sup_{w \in L^2(\Gamma)}
\frac{h^{-d/2}\norm{\Phi_{\mathcal{I}}\Lambda_{\mathcal{I}} w}_{L^2(\Gamma)}}{\norm{w}_{L^2(\Gamma)}}\\
 &\lesssim h^{-d/2}\norm{\mathbf{b}}_2. 
\end{align*}
Let $\mathbf{b} \in \mathbb{R}^N$. Defining $f := \Lambda_{\mathcal{I}}^*\mathbf{b}|_{\sigma}$,
we get $b_i =\skp{f,\chi_i}$ for $i \in \sigma$, and $\operatorname*{supp} f \subset B_{R_{\sigma}}\cap\Gamma$.
Proposition~\ref{thm:function-approximation} provides a 
finite dimensional space $W_k$ and an element $w \in W_k$ 
that is a good approximation to the Galerkin solution $\phi_h|_{B_{R_{\tau}}\cap \Gamma}$.
It is important to note that the space $W_k$ is constructed independently of the function $f$; 
it depends only on the cluster pair $(\tau,\sigma)$. 
The estimate \eqref{eq:isomorphism}, the approximation result from 
Proposition~\ref{thm:function-approximation},
and $\norm{\Pi^{L^2}f}_{L^2(\Gamma)} = \norm{\Lambda_{\mathcal{I}}^*\mathbf{b}}_{L^2(\Gamma)} 
\lesssim h^{-d/2}\norm{\mathbf{b}}_2$ imply
\begin{align*}
\norm{\Lambda_{\tau} \phi_h - \Lambda_{\tau} w}_2 &\lesssim 
h^{-d/2}\norm{\Phi_{\tau}(\Lambda_{\tau} \phi_h-\Lambda_{\tau} w)}_{L^2(\Gamma)} 
\lesssim h^{-d/2}\norm{\phi_h-w}_{L^2(B_{R_{\tau}}\cap \Gamma)} \\
  &\lesssim h^{-d/2-2}q^k\norm{\Pi^{L^2}f}_{L^2(\Gamma)} \lesssim h^{-(d+2)}q^k\norm{\mathbf{b}}_{2}.
\end{align*}
In order to translate this approximation result to the matrix level, let 
\begin{equation*}
\mathcal{W} := \{\Lambda_{\tau} w \; : \; w \in W_k \}.
\end{equation*}
Let the columns of $\mathbf{X}_{\tau\sigma}$ be an orthogonal basis of the space $\mathcal{W}$. 
Then, the rank of $\mathbf{X}_{\tau\sigma}$ is bounded by $C_{\rm dim}  (2+\eta)^d q^{-d}k^{d+1} $. 
Since $\mathbf{X}_{\tau\sigma} \mathbf{X}_{\tau\sigma}^T$ is the orthogonal projection from 
$\mathbb{R}^N$ onto $\mathcal{W}$, 
we get that $z:=\mathbf{X}_{\tau\sigma} \mathbf{X}_{\tau\sigma}^T \Lambda_{\tau} \phi_h$ 
is the best approximation of $\Lambda_{\tau} \phi_h$ in $\mathcal{W}$ and arrive at 
\begin{equation} 
\label{eq:matrix-level-estimate-function}
\norm{\Lambda_{\tau} \phi_h-z}_2 \leq \norm{\Lambda_{\tau} \phi_h-\Lambda_{\tau} w}_2 \lesssim 
h^{-(d+2)}q^k\norm{\mathbf{b}}_2
\simeq N^{(d+2)/(d-1)}q^k\norm{\mathbf{b}}_2.
\end{equation}
Noting that $\Lambda_{\tau} \phi_h = \mathbf{V}^{-1}|_{\tau\times \sigma} \mathbf{b}|_\sigma$,
if we define $\mathbf{Y}_{\tau,\sigma} := \mathbf{V}^{-1}|_{\tau\times \sigma}^{T}\mathbf{X}_{\tau\sigma}$, 
we thus get $z = \mathbf{X}_{\tau\sigma} \mathbf{Y}_{\tau\sigma}^T \mathbf{b}|_\sigma$. The bound 
\eqref{eq:matrix-level-estimate-function} expresses 
\begin{equation}
\label{eq:matrix-level-estimate-function-1}
\| 
\left( \mathbf{V}^{-1}|_{\tau\times \sigma} - \mathbf{X}_{\tau\sigma} \mathbf{Y}_{\tau\sigma}^T \right)\mathbf{b}|_\sigma
\|_2 \lesssim N^{(d+2)/(d-1)}q^k\norm{\mathbf{b}}_2.
\end{equation}
The space $W_k$ depends only on the cluster pair $(\tau,\sigma)$ and the estimate 
\eqref{eq:matrix-level-estimate-function-1} is valid for any $\mathbf b$. 
This concludes the proof. 
\end{proof}

The following lemma gives an estimate for the global spectral norm by the local spectral norms.

\begin{lemma}[{\cite{GrasedyckDissertation},\cite[Lemma 6.5.8]{HackbuschBuch}}]\label{lem:spectralnorm}
Let $\mathbf{M} \in \mathbb{R}^{N\times N}$ and $P$ be a partitioning of ${\mathcal{I}}\times {\mathcal{I}}$. Then,
\begin{equation*}
\norm{\mathbf{M}}_2 \leq C_{\rm sp} \left(\sum_{\ell=0}^{\infty}\max\{\norm{\mathbf{M}|_{\tau\times \sigma}}_2 : 
(\tau,\sigma) \in P, \operatorname{level}(\tau) = \ell\}\right),
\end{equation*}
where the sparsity constant $C_{\rm sp}$ is defined in \eqref{eq:sparsityConstant}.
\end{lemma}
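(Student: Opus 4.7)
The plan is to decompose $\mathbf{M}$ into a telescoping sum over the levels of the cluster tree and then bound each level separately via the sparsity constant and a Cauchy--Schwarz argument.

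Since the partition $P \subset \mathbb{T}_{\mathcal I} \times \mathbb{T}_{\mathcal I}$ is obtained by the standard top-down admissibility procedure, every pair $(\tau,\sigma) \in P$ satisfies $\operatorname{level}(\tau) = \operatorname{level}(\sigma)$. Writing
\[
P_\ell := \{(\tau,\sigma) \in P : \operatorname{level}(\tau) = \ell\},
\qquad
\mathbf{M}^{(\ell)} := \sum_{(\tau,\sigma) \in P_\ell} \mathbf{M}|_{\tau \times \sigma}
\]
(with the blocks embedded by zero into $\mathbb{R}^{N\times N}$), one gets $\mathbf{M} = \sum_{\ell \geq 0} \mathbf{M}^{(\ell)}$ and, by the triangle inequality, $\|\mathbf{M}\|_2 \leq \sum_\ell \|\mathbf{M}^{(\ell)}\|_2$. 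It therefore suffices to prove
\[
\|\mathbf{M}^{(\ell)}\|_2 \;\leq\; C_{\rm sp}\, M_\ell,
\qquad
M_\ell := \max\{\|\mathbf{M}|_{\tau\times\sigma}\|_2 : (\tau,\sigma) \in P_\ell\}.
\]

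To establish this, I would use the variational characterization $\|\mathbf{M}^{(\ell)}\|_2 = \sup_{x,y} |y^T \mathbf{M}^{(\ell)} x|/(\|x\|_2\|y\|_2)$. For arbitrary $x, y \in \mathbb{R}^N$,
\[
|y^T \mathbf{M}^{(\ell)} x|
\;\leq\; \sum_{(\tau,\sigma) \in P_\ell} \|y|_\tau\|_2 \,\|\mathbf{M}|_{\tau\times\sigma}\|_2\, \|x|_\sigma\|_2
\;\leq\; M_\ell \sum_{(\tau,\sigma) \in P_\ell} \|y|_\tau\|_2 \|x|_\sigma\|_2,
\]
and by Cauchy--Schwarz the remaining sum is bounded by
\[
\Bigl(\sum_{(\tau,\sigma) \in P_\ell} \|y|_\tau\|_2^2\Bigr)^{1/2}
\Bigl(\sum_{(\tau,\sigma) \in P_\ell} \|x|_\sigma\|_2^2\Bigr)^{1/2}.
\]
Now the sparsity constant~\eqref{eq:sparsityConstant} enters twice: each row-cluster $\tau$ appears in at most $C_{\rm sp}$ pairs of $P_\ell$, so the first factor is bounded by $C_{\rm sp}\sum_{\tau \in T_\ell^{\rm row}} \|y|_\tau\|_2^2 \leq C_{\rm sp} \|y\|_2^2$, where the last inequality uses that clusters at the same level~$\ell$ are pairwise disjoint. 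Analogously---and this is the step that needs the balanced-level property of the partition, so that column-clusters appearing in $P_\ell$ are also at level~$\ell$ and hence disjoint---the second factor is bounded by $C_{\rm sp} \|x\|_2^2$. Combining, $|y^T \mathbf{M}^{(\ell)} x| \leq C_{\rm sp} M_\ell \|x\|_2 \|y\|_2$, and summation over $\ell$ yields the claim.

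The delicate point is the symmetric use of the sparsity constant on both the row and column side. This only yields the clean factor $C_{\rm sp}$ because the balanced-level property of admissibility-based partitions ensures that the $\sigma$-clusters appearing in $P_\ell$ are at level $\ell$ and thus disjoint; if one dropped this property and allowed blocks $(\tau,\sigma)$ with $\operatorname{level}(\tau) \ne \operatorname{level}(\sigma)$, the column sum would not collapse to $\|x\|_2^2$ and one would need a finer two-indexed level decomposition. Apart from this bookkeeping, the remainder of the argument is a direct Cauchy--Schwarz computation.
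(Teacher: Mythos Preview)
The paper does not supply its own proof of this lemma; it is cited from \cite{GrasedyckDissertation} and \cite[Lemma~6.5.8]{HackbuschBuch}. Your argument---level-wise splitting, variational characterization of the spectral norm, Cauchy--Schwarz over the block pairs at each level, and collapsing each factor via the sparsity constant together with disjointness of same-level clusters---is exactly the standard proof found in those references, and it is correct.

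Two minor bookkeeping remarks. First, the paper's definition of $C_{\rm sp}$ in \eqref{eq:sparsityConstant} counts only far-field neighbours, whereas your argument needs a bound on \emph{all} neighbours in $P_\ell$; in the paper's applications this causes no trouble because the matrices to which the lemma is applied vanish on near-field blocks, but for the lemma as a stand-alone statement one either needs $C_{\rm sp}$ defined over the full partition (as in \cite{HackbuschBuch}) or an additional remark. Second, your proof correctly flags that the clean estimate relies on $P$ being level-conserving ($\operatorname{level}(\tau)=\operatorname{level}(\sigma)$ for all $(\tau,\sigma)\in P$); the paper's definition of ``partition based on a cluster tree'' does not make this explicit, but it is the standard setting and is assumed in the cited sources.
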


Now we are able to prove our main result, Theorem~\ref{th:Happrox}.

\begin{proof}[Proof of Theorem \ref{th:Happrox}]
Theorem \ref{th:blockapprox} provides matrices $\mathbf{X}_{\tau\sigma} \in \mathbb{R}^{\abs{\tau}\times r}$, 
$\mathbf{Y}_{\tau\sigma} \in \mathbb{R}^{\abs{\sigma}\times r}$, 
so we can define the $\mathcal{H}$-matrix $\mathbf{V}_{\mathcal{H}}$ by 
\begin{equation*}
\mathbf{W}_{\mathcal{H}} = \left\{
\begin{array}{l}
 \mathbf{X}_{\tau\sigma}\mathbf{Y}_{\tau\sigma}^T \quad \;\textrm{if}\hspace{2mm} (\tau,\sigma) \in P_{\text{far}}, \\
 \mathbf{V}^{-1}|_{\tau \times \sigma} \quad \textrm{otherwise}.
 \end{array}
 \right.
\end{equation*}
On each admissible block $(\tau,\sigma) \in P_{\rm far}$ we can use the blockwise estimate of Theorem \ref{th:blockapprox} and get
\begin{equation*}
\norm{(\mathbf{V}^{-1} - \mathbf{W}_{\mathcal{H}})|_{\tau \times \sigma}}_2 \leq C_{\rm apx}N^{(d+2)/(d-1)} q^k.
\end{equation*}
On inadmissible blocks, the error is zero by definition. 
Therefore, Lemma \ref{lem:spectralnorm} leads to
\begin{align*}
\norm{\mathbf{V}^{-1} - \mathbf{W}_{\mathcal{H}}}_2 &\leq C_{\rm sp} \left(\sum_{\ell=0}^{\infty}\text{max}
\{\norm{(\mathbf{V}^{-1} - \mathbf{V}_{\mathcal{H}})|_{\tau \times \sigma}}_2 : (\tau,\sigma) \in P, 
\operatorname{level}(\tau) = \ell\}\right) \\
 &\leq C_{\rm apx}C_{\rm sp} N^{(d+2)/(d-1)} q^k \operatorname{depth}(\mathbb{T}_{\mathcal{I}}).
\end{align*}
With $r = C_{\rm dim}(2+\eta)^dq^{-d}k^{d+1}$,
defining $b = -\frac{\ln(q)}{C_{\rm dim}^{1/(d+1)}}q^{d/(d+1)}(2+\eta)^{-d/(1+d)} > 0$ leads to 
$q^k = e^{-br^{1/(d+1)}}$, and hence
\begin{equation*}
\norm{\mathbf{V}^{-1} - \mathbf{W}_{\mathcal{H}}}_2 \leq C_{\rm apx}C_{\rm sp} N^{(d+2)/(d-1)} 
{\rm depth}(\mathbb{T}_{\mathcal{I}})e^{-br^{1/(d+1)}},
\end{equation*}
which concludes the proof.
\end{proof}

\section{$\mathcal{H}$-Cholesky decomposition: Proof of Theorem~\ref{th:HLU}}
\label{sec:LU-decomposition}
The aim of this section is the proof of Theorem~\ref{th:HLU}. 
Our procedure follows \cite{Bebendorf07,GrasedyckKriemannLeBorne,FMP13} and is based on showing 
that the off-diagonal block of certain Schur complements can be approximated by low-rank matrices.  
The analysis of these Schur complement matrices in Section~\ref{sec:schur-complements} is therefore
the main contribution of the section. 

The matrix $\mathbf{V}$ is symmetric and positive definite and therefore has a (classical) Cholesky-decomposition 
$\mathbf{V}=\mathbf{C}\mathbf{C}^T$, 
where $\mathbf{C}$ is a lower triangular matrix.
Moreover, the existence of the Cholesky decomposition does not depend on the numbering of the degrees of freedom, 
i.e., for every other numbering of the basis functions there is a Cholesky decomposition as well 
(see, e.g., \cite[Cor.~{3.5.6}]{horn-johnson13}). 
The existence of the Cholesky decomposition implies
the invertibility of the matrix $\mathbf{V}|_{\rho \times \rho}$ for any $n \leq N$ and index set 
$\rho := \{1,\ldots,n\}$ (see, e.g., \cite[Cor.~{3.5.6}]{horn-johnson13}). 

The first step is the approximation of appropriate Schur complements. 

\subsection{Schur complements}
\label{sec:schur-complements}

For a cluster pair $(\tau,\sigma)$, we define the index set $\rho := \{i\in \mathcal{I} : i < \min(\tau\cup\sigma)\}$
and the Schur complement
\begin{equation}\label{eq:defSchur}
\mathbf{S}(\tau,\sigma) = \mathbf{V}|_{\tau\times\sigma} - \mathbf{V}|_{\tau\times \rho} (\mathbf{V}|_{\rho\times \rho})^{-1}\mathbf{V}|_{\rho\times\sigma}.
\end{equation}
One way to approximate the Schur complement is to use the $\mathcal{H}$-arithmetic.
As stated in \cite[Theorem 15]{GrasedyckKriemannLeBorne}, 
this results in a low-rank approximation to 
$\mathbf{S}(\tau,\sigma)$ of rank  
$C_{\rm id}C_{\rm sp}({\rm depth}(\mathbb{T}_{\mathcal{I}})+1)^2 r$, where the idempotency constant 
$C_{\rm id}$ is defined in \cite{GrasedyckHackbusch}, and
$r$ is the block rank used for the approximation of the inverse matrix $\mathbf{V}^{-1}$. 
In the following Theorem~\ref{lem:Schur}, we provide a low-rank approximation by using a different approach,
which uses the techniques developed in Section~\ref{sec:Approximation-solution} and gives a better bound in terms of the
rank of the approximation, i.e., a rank of $Cr$ is sufficient to obtain the same accuracy.
This approach relies on interpreting Schur complements as BEM matrices from certain constrained spaces. 

The key step is Theorem~\ref{lem:Schur} below. For its proof, we need a degenerate approximation 
of the Green's function $G(\cdot,\cdot)$.
This is a classical result that underlies the log-linear matrix-vector multiplication in BEM 
and can be achieved by multipole expansions \cite{rokhlin85,greengard-rokhlin97}, 
Taylor expansions \cite{hackbusch-nowak88,hackbusch-nowak89,sauter92,hackbusch-sauter93} or by interpolation 
(see, e.g., \cite[Sec.~{7.1.3.1}]{SauterSchwab}). The following lemma recalls a variant of such a degenerate 
approximation that is obtained with Chebyshev interpolation: 

\begin{lemma}\label{lem:lowrankGreensfunction}
Let $\widetilde \eta>0$ and fix $\eta^\prime \in (0,2 \widetilde \eta)$. Then, for every hyper cube 
$B_Y \subset \mathbb{R}^d$, $d \in \{2,3\}$ and closed $D_X \subset \mathbb{R}^d$ with 
$\operatorname{dist}(B_Y,D_X)\geq \widetilde\eta \operatorname*{diam}(B_Y)$ the following is true: For every 
$r \in \mathbb{N}$ there exist functions $g_{1,i}$, $g_{2,i}$, $i=1,\ldots,r$, such that 
\begin{equation*}
\norm{G(x,\cdot)-\sum_{i=1}^r g_{1,i}(x) g_{2,i}(\cdot)}_{L^{\infty}(B_{Y})}
\leq C \frac{(1+1/\widetilde \eta)}{\operatorname{dist}(\{x\},B_Y)^{d-2}} (1 + \eta^\prime)^{-r^{1/d}}
\quad \forall x \in D_X, 
\end{equation*}
for a constant $C$ that depends solely on the choice of $\eta^\prime  \in (0,2\widetilde\eta)$. 
\end{lemma}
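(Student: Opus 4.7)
The plan is to construct the degenerate approximation of $G(x,\cdot)$ on the box $B_Y$ by tensor-product Chebyshev interpolation, as is standard in BEM (cf.\ \cite[Sec.~{7.1.3.1}]{SauterSchwab}). Let $p\in\mathbb{N}$ be the largest integer with $p^d\le r$ and let $\{y_\nu\}$, $\nu\in\{1,\ldots,p\}^d$, denote the tensor-product Chebyshev nodes of $B_Y$. With the associated tensor-product Lagrange basis $L_\nu(y)=\prod_{j=1}^d \ell_{\nu_j}(y_j)$ at these nodes, set
\[
 g_{1,i}(x) := G(x-y_{\nu(i)}), \qquad g_{2,i}(y) := L_{\nu(i)}(y)
\]
under any bijection $\nu\leftrightarrow i\in\{1,\ldots,p^d\}$, and trivially extend by zero if $p^d<r$. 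This yields the required separable form; it only remains to estimate the interpolation error.

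The key input is the classical Bernstein bound: for a univariate function $f$ analytic on the Bernstein ellipse $E_\rho$ of parameter $\rho>1$ around an interval $I$,
\[
\|f-\Pi_p f\|_{L^\infty(I)} \le \frac{4\rho^{-p}}{\rho-1}\|f\|_{L^\infty(E_\rho)}.
\]
A standard telescoping argument for tensor-product interpolation (see, e.g., the references in \cite[Sec.~{7.1.3.1}]{SauterSchwab}) gives
\[
\|G(x,\cdot)-\mathcal{I}_p G(x,\cdot)\|_{L^\infty(B_Y)}
\;\le\; C\, d\, \frac{\rho^{-p}}{\rho-1}\,\sup_{z\in \mathcal{E}_\rho} |G(x-z)|,
\]
where $\mathcal{E}_\rho\subset\mathbb{C}^d$ is the product of the $d$ Bernstein ellipses associated with the coordinate edges of $B_Y$. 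Since $p=\lfloor r^{1/d}\rfloor$, choosing $\rho=1+\eta'$ leads to the claimed exponential factor $(1+\eta')^{-r^{1/d}}$.

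It remains to verify the two geometric/analytic points: (a) that for any $\eta'\in(0,2\widetilde\eta)$ the ellipse $\mathcal{E}_{1+\eta'}$ lies in the analyticity domain of $G(x-\cdot)$, i.e., does not touch $\{x\}$; and (b) that $\sup_{z\in\mathcal{E}_{1+\eta'}}|G(x-z)|$ is bounded by $C(1+\widetilde\eta^{-1})\operatorname{dist}(\{x\},B_Y)^{-(d-2)}$. Denoting by $R_Y$ the side length of $B_Y$, the scaled Bernstein ellipse in each coordinate has semi-major axis $\tfrac{R_Y}{4}(\rho+\rho^{-1})$, so $\mathcal{E}_\rho$ is contained in a complex ball of radius $\tfrac{\sqrt d\, R_Y}{4}(\rho+\rho^{-1})$ around the center of $B_Y$. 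Using $\operatorname{dist}(x,B_Y)\ge\widetilde\eta\operatorname*{diam}(B_Y)=\widetilde\eta\sqrt d\,R_Y$ together with the expansion $\tfrac14(\rho+\rho^{-1}) = \tfrac12 + \tfrac14(\rho-1)^2+\cdots$, one checks that any $\rho=1+\eta'$ with $\eta'<2\widetilde\eta$ keeps $x$ outside $\mathcal{E}_\rho$ with a definite relative margin depending only on $\eta'$ and $\widetilde\eta$, which in turn yields $\operatorname{dist}(x,z)\gtrsim\operatorname{dist}(x,B_Y)$ for all $z\in\mathcal{E}_\rho$. Inserting this into the explicit formulae for $G$ gives (b): in $d=3$ directly from $|G(x-z)|\le\tfrac{1}{4\pi}\operatorname{dist}(x,z)^{-1}$, while in $d=2$ the logarithm is absorbed by the factor $(1+1/\widetilde\eta)$.

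The main obstacle is the quantitative geometric check in (a): one needs to carefully track the factor $\sqrt d$ from $\operatorname*{diam}(B_Y)=\sqrt d\,R_Y$ versus the side length $R_Y$ to which the Bernstein ellipses are normalized, and match the resulting constraint on $\rho-1$ to the hypothesis $\eta'<2\widetilde\eta$. The rest is routine Chebyshev approximation theory and bookkeeping of constants.
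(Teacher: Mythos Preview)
Your construction coincides with the paper's: tensor-product Chebyshev interpolation $G_r(x,\cdot)=I^y_k G(x,\cdot)$ with $r=(k+1)^d$. The difference is in the error analysis. The paper does \emph{not} invoke Bernstein ellipses; instead it quotes the real-variable estimate of \cite{BoermGrasedyck04} for Chebyshev interpolation of asymptotically smooth kernels, which directly yields
\[
\|G(x,\cdot)-I^y_k G(x,\cdot)\|_{L^\infty(B_Y)}
\lesssim \operatorname{dist}(\{x\},B_Y)^{-(d-2)}\,(1+1/\widetilde\eta)\,\Lambda_k^d\, r^{1/d}
\Bigl(1+\tfrac{2\operatorname{dist}(\{x\},B_Y)}{\operatorname*{diam}(B_Y)}\Bigr)^{-r^{1/d}}.
\]
Since $\operatorname{dist}(\{x\},B_Y)\ge\widetilde\eta\operatorname*{diam}(B_Y)$, the base of the exponential is at least $1+2\widetilde\eta$, and any $\eta'<2\widetilde\eta$ is then used only to absorb the algebraic prefactor $\Lambda_k^d\,r^{1/d}$ into the constant. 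This is why the threshold is exactly $2\widetilde\eta$.

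Your Bernstein-ellipse route is in principle a valid alternative, but there is a genuine gap in step (a). You write that analyticity means the polyellipse ``does not touch $\{x\}$''. For $d\ge 2$ this is false: the complex singular set of $y\mapsto G(x-y)$ is the variety $\{y\in\mathbb C^d:\sum_j(x_j-y_j)^2=0\}$ (plus branch issues), not the single real point $x$. In the telescoped univariate analysis you actually need, the singularities in the $j$-th complex coordinate sit at $y_j=x_j\pm i\bigl(\sum_{k\ne j}(x_k-y_k)^2\bigr)^{1/2}$ for real $y_k\in I_k$, which can be arbitrarily close to the real axis even when $x$ is far from $B_Y$ in $\mathbb R^d$. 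Consequently your heuristic ``$\mathcal E_{1+\eta'}$ stays away from $x$ with margin'' does not by itself establish analyticity on the polyellipse, and it is not clear that the admissible range of $\rho$ one extracts this way coincides with $\eta'<2\widetilde\eta$. The asymptotic-smoothness argument used in the paper sidesteps all of this by working entirely with real derivative bounds $|\partial_y^\alpha G(x-y)|\le C\,\alpha!\,|x-y|^{-(d-2)-|\alpha|}$.
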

\begin{proof} 
Let $I^y_k:C(\overline{B_Y}) \rightarrow {\mathcal Q}_k$ 
be the tensor product interpolation operator of degree $k$ 
defined on $C(\overline{B_Y})$ and mapping into the 
space ${\mathcal Q}_k$ of polynomial of degree $k$
in each variable. Note that $\dim {\mathcal Q}_k = (k+1)^d=:r$. 
The approximation $G_r(x,y):= \sum_{i=1}^r g_{1,i}(x) g_{2,i}(y)$ is then taken to be 
$
G_r(x,\cdot):= I^y_k G(x,\cdot). 
$
The stated error bound follows from estimates for Chebyshev interpolation. 
We note that the Green's function for the Laplacian is asymptotically smooth 
(see \cite[Definition~{4.2.5}]{HackbuschBuch} with constant $c_{\rm as}(\nu) = C\nu!$).
Tensorial interpolation in the form given in \cite{BoermGrasedyck04}
allows us to estimate 
\begin{align*}
\norm{G(x,\cdot)-I^y_k G(x,\cdot)}_{L^{\infty}(B_Y)} 
&\lesssim \frac{1}{\operatorname{dist}(\{x\},B_Y)^{d-2}}
\left(1+\frac{\operatorname*{diam}(B_Y)}{\operatorname{dist}(B_Y,\{x\})}\right)\Lambda_k^d r^{1/d}\\
&\quad \cdot\left(1+\frac{2\operatorname{dist}(B_Y,\{x\})}{\operatorname*{diam}(B_Y)}\right)^{-r^{1/d}}, 
\end{align*}
where $\Lambda_k \leq 1+ \frac{2}{\pi}\ln (k+1)$ is the Lebesgue constant of Chebyshev interpolation, cf. \cite{Rivlin}. 
The observation $\operatorname{dist}(\{x\},B_Y) \ge \operatorname{dist}(B_Y,D_X)\geq \widetilde\eta \operatorname*{diam}(B_Y)$ 
and the choice $\eta^\prime < 2 \widetilde \eta$  imply the claimed estimate. 
\end{proof}

\begin{theorem}\label{lem:Schur}
Let $(\tau,\sigma)$ be an $\eta$-admissible cluster pair, 
set $\rho := \{i\in \mathcal{I} : i < \min(\tau\cup\sigma)\}$,
and let the Schur complement $\mathbf{S}(\tau,\sigma)$ be defined in \eqref{eq:defSchur}.
Then, there exists a rank-$r$ matrix $\mathbf{S}_{r}(\tau,\sigma)$ such that 
\begin{equation*}
\norm{\mathbf{S}(\tau,\sigma) - \mathbf{S}_{r}(\tau,\sigma)}_2 \leq C_{\rm sc}  N^{3/(2d-2)} e^{-br^{1/(d+1)}}\norm{\mathbf{V}}_2,
\end{equation*}
where the constants $C_{\rm sc},b >0$ depend only on $\Omega$,
$d$, the $\gamma$-shape regularity of the quasiuniform triangulation $\mathcal{T}_h$, and $\eta$.
\end{theorem}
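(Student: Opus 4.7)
The plan is to interpret $\mathbf{S}(\tau,\sigma)$ as a discrete boundary integral operator on a Galerkin-constrained subspace and then invoke the approximation machinery of Section~\ref{sec:Approximation-solution} in its $\Gamma_\rho$-variant---precisely the setup that motivated the introduction of the spaces $\mathcal{H}_{h,0}(D,\Gamma_\rho)$ with $\Gamma_\rho \subsetneq \Gamma$. Given $\mathbf{y}_\sigma \in \mathbb{R}^\sigma$, we set $f_\sigma := \sum_{k\in\sigma} y_k \chi_k$ and let $w \in S^{0,0}_\rho := \operatorname{span}\{\chi_i : i \in \rho\}$ be the unique Galerkin solution of $\langle V(f_\sigma+w),\psi\rangle = 0$ for all $\psi \in S^{0,0}_\rho$ (existence and uniqueness follow from SPD-ness of $\mathbf{V}|_{\rho\times\rho}$). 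With $\phi := f_\sigma + w$ and $u := \widetilde V\phi$, block Gaussian elimination yields the key identity
\begin{equation*}
(\mathbf{S}(\tau,\sigma)\mathbf{y}_\sigma)_j = \langle V\phi,\chi_j\rangle = \int_{T_j} u \, ds, \qquad j \in \tau.
\end{equation*}

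Following the proof of Proposition~\ref{thm:function-approximation}, we choose $\kappa := 1/(1+\eta)$. The $\eta$-admissibility of $(\tau,\sigma)$ ensures $B_{(1+\kappa)R_\tau} \cap B_{R_\sigma} = \emptyset$, so that on $B_{(1+\kappa)R_\tau}\cap\Gamma$ the jump $[\partial_n u]=-\phi$ reduces to $-w$ and is supported in $\overline{\Gamma_\rho}$. The Galerkin orthogonality on $S^{0,0}_\rho$ translates verbatim into $\langle u,\psi\rangle = 0$ for every piecewise-constant $\psi$ supported in $\overline{\Gamma_\rho}$, so that $u \in \mathcal{H}_{h,0}(B_{(1+\kappa)R_\tau},\Gamma_\rho)$. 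Lemma~\ref{cor:lowdimapp}, applied with this $\Gamma_\rho$, then furnishes a subspace $\widehat W_k$ of dimension at most $r := C_{\rm dim}(2+\eta)^d q^{-d} k^{d+1}$, and the orthogonal projection $\Pi_{\widehat W_k}$ with respect to the Hilbert structure inducing $\triplenorm{\cdot}_{h,(1+\kappa/2)R_\tau}$ is a linear approximation operator satisfying $\triplenorm{u-\Pi_{\widehat W_k}u}_{h,(1+\kappa/2)R_\tau} \le q^k \triplenorm{u}_{h,(1+\kappa)R_\tau}$. Defining $(\mathbf{S}_r\mathbf{y}_\sigma)_j := \int_{T_j} \Pi_{\widehat W_k}u \, ds$, linearity of the chain $\mathbf{y}_\sigma \mapsto \phi \mapsto u \mapsto \Pi_{\widehat W_k}u$ guarantees $\operatorname{rank} \mathbf{S}_r \le r$.

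For the quantitative bound we assemble standard stability estimates: ellipticity of $V$ and a C\'ea-type argument give $\|\phi\|_{H^{-1/2}(\Gamma)} \lesssim \|f_\sigma\|_{H^{-1/2}(\Gamma)} \lesssim \|f_\sigma\|_{L^2(\Gamma)} \lesssim h^{d/2}\|\mathbf{y}_\sigma\|_2$ via~\eqref{eq:isomorphism}; continuity of $\widetilde V: H^{-1/2}(\Gamma) \to H^1_{\rm loc}(\mathbb R^d)$ yields $\|u\|_{H^1(B_{(1+\kappa)R_\tau})} \lesssim \|\phi\|_{H^{-1/2}}$ and hence $\triplenorm{u}_{h,(1+\kappa)R_\tau} \lesssim R_\tau^{-1}\|u\|_{H^1}$; a local trace inequality for the box $B_{(1+\kappa/2)R_\tau}$ combined with the reverse estimate $\|v\|_{H^1(B_R)} \lesssim (R/h)\triplenorm{v}_{h,R}$ converts the triple-norm approximation error into an $L^2(\Gamma\cap B_{R_\tau})$ error for the traces $\gamma_0(u-\Pi_{\widehat W_k}u)$. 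A columnwise Cauchy--Schwarz with $|T_j| \lesssim h^{d-1}$ summed over $j\in\tau$ then yields $\|(\mathbf{S}-\mathbf{S}_r)\mathbf{y}_\sigma\|_2 \lesssim h^{\alpha}q^k \|\mathbf{y}_\sigma\|_2$ for a suitable $\alpha$. Rewriting via $h \simeq N^{-1/(d-1)}$ and $\|\mathbf{V}\|_2 \simeq h^{(d-1)/2}$, and setting $q^k = e^{-br^{1/(d+1)}}$ as in the proof of Theorem~\ref{th:Happrox}, yields the stated estimate.

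The conceptual hurdle is the initial identification: recognizing that the $\Gamma_\rho \subsetneq \Gamma$ version of $\mathcal{H}_{h,0}$---introduced in Section~\ref{sec:Approximation-solution} but unused for Theorems~\ref{th:blockapprox} and \ref{th:Happrox}---precisely captures the ``Schur-corrected'' potential $u$, so that the entire Caccioppoli-plus-iteration machinery applies verbatim without new analytic work. The remaining technical hurdle is the careful chaining of the stability, inverse, and trace inequalities in the last step, where the powers of $h$ coming from~\eqref{eq:isomorphism}, the inverse estimate~\eqref{eq:inverse-a}, and the $\triplenorm{\cdot}_{h,R}$-versus-$H^1$ conversion must align to produce the claimed $N^{3/(2d-2)}\|\mathbf V\|_2$ factor.
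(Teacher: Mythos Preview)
Your approach is correct and genuinely simpler than the paper's. Both proofs start from the same variational identity $(\mathbf{S}(\tau,\sigma)\mathbf{y}_\sigma)_j=\langle V\phi,\chi_j\rangle$ with $\phi=f_\sigma+w$ Galerkin--orthogonal on $\Gamma_\rho$, and both recognise that the potential $u=\widetilde V\phi$ lies in $\mathcal{H}_{h,0}(\cdot,\Gamma_\rho)$ near the ``other'' cluster. From there the arguments diverge. The paper writes $\langle V\widetilde\phi,\psi\rangle=\langle\widetilde\phi,V\psi\rangle$ and splits the $\widetilde\phi$--integral into a piece near $B_{R_\sigma}$ (handled by approximating the \emph{density} $\widetilde\phi=-[\partial_n u]$ via the normal--jump estimate of Lemma~\ref{cor:lowdimapp}) and a far piece (handled by a separate degenerate--kernel expansion of $G$, Lemma~\ref{lem:lowrankGreensfunction}); the resulting rank is the sum of the two contributions. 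You instead approximate the \emph{trace} $u|_{B_{R_\tau}\cap\Gamma}$ directly, using the triple--norm part of Lemma~\ref{cor:lowdimapp} together with a cutoff and the global trace inequality $H^1(\Omega)\to L^2(\Gamma)$. This eliminates the need for Lemma~\ref{lem:lowrankGreensfunction} altogether and yields a single rank contribution, which is arguably cleaner and more in the spirit of the proof of Theorem~\ref{th:blockapprox}. Tracking the constants, your route even produces a slightly smaller power of $N$ in front of the exponential (roughly $N^{1/(d-1)}$ instead of $N^{3/(2d-2)}$), so the claimed bound certainly holds. The only point you left implicit is the symmetry $\mathbf{S}(\tau,\sigma)=\mathbf{S}(\sigma,\tau)^T$, which is needed to assume without loss of generality that $\operatorname{diam}(B_{R_\tau})$ realises the minimum in~\eqref{eq:admissibility}; this is straightforward since $\rho$ is symmetric in $\tau,\sigma$.
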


\begin{proof}
Let $B_{R_{\tau}},B_{R_{\sigma}}$ be
bounding boxes for the clusters $\tau$, $\sigma$ satisfying \eqref{eq:admissibility}.
We define $\Gamma_{\rho} = {\rm interior}\left( \bigcup_{i\in \rho}\operatorname*{supp} \psi_i \right) \subset \Gamma$. 
First, we recall that the Schur complement matrix
$\mathbf{S}(\tau,\sigma)$ can be understood in terms of an orthogonalization
with respect to the degrees of freedom in $\rho$. More precisely, 
a direct calculation (see for the essentials, e.g., \cite{Brenner99}) 
shows for $\boldsymbol{\phi}\in\mathbb{R}^{\abs{\tau}}$, $\boldsymbol{\psi}\in\mathbb{R}^{\abs{\sigma}}$
the  representation
\begin{equation}\label{eq:SchurRepresentation}
\boldsymbol{\phi}^T \mathbf{S}(\tau,\sigma)\boldsymbol{\psi} = \skp{V\widetilde{\phi},\psi}, 
\end{equation}
with the following relation between the functions $\psi$, $\widetilde\phi$ and the vectors 
$\boldsymbol{\psi}$, $\boldsymbol{\phi}$, respectively: 
$\psi = \sum_{j=1}^{\abs{\sigma}}\boldsymbol{\psi}_j\chi_{j_{\sigma}}$, 
where the index $j_{\sigma}$ denotes the $j$-th basis function corresponding to the cluster $\sigma$,
and the function $\widetilde{\phi} \in S^{0,0}(\mathcal{T}_h)$ 
is defined by $\widetilde{\phi} = \phi + \phi_{\rho}$ 
with $\phi = \sum_{j=1}^{\abs{\tau}}\boldsymbol{\phi}_j\chi_{j_{\tau}}$
and $\operatorname*{supp} \phi_{\rho} \subset \overline{\Gamma_{\rho}}$ such that 
\begin{equation}\label{eq:SchurOrthogonality}
\skp{V\widetilde{\phi},\widehat{\psi}} = 0 \quad \forall \widehat{\psi} \in S^{0,0}({\mathcal T}_h) \; \text{with}\; 
\operatorname*{supp} \widehat{\psi} \subset \overline{\Gamma_{\rho}}.
\end{equation}
Our low-rank approximation of the Schur complement matrix $\mathbf{S}(\tau,\sigma)$ will have two
ingredients: first, 
based on the the techniques of Section~\ref{sec:Approximation-solution} we exploit the 
orthogonality \eqref{eq:SchurOrthogonality} to 
construct a low-dimensional space $\widehat W_k$ from which for any $\phi$, the corresponding
function $\widetilde \phi$ can be approximated well. Second,  
we exploit that the function $\psi$ in (\ref{eq:SchurRepresentation}) is 
supported by $\Gamma_{\sigma}$, and we will use Lemma~\ref{lem:lowrankGreensfunction}. 

Let $\delta = \frac{1}{1+\eta}$ and $B_{R_{\sigma}}$, $B_{(1+\delta)R_{\sigma}}$ be concentric boxes.
The symmetry of $V$ leads to
\begin{align}\label{eq:tmpSchur}
\skp{V\widetilde{\phi},\psi} &= \skp{\widetilde{\phi},V\psi} \nonumber \\ &=
\skp{\widetilde{\phi},V\psi}_{L^2( B_{(1+\delta)R_{\sigma}}\cap\Gamma_{\rho} )} + 
\skp{\widetilde{\phi},V\psi}_{L^2(\Gamma \backslash B_{(1+\delta)R_{\sigma}})}. 
\end{align}
First, we treat the first term on the right-hand side of \eqref{eq:tmpSchur}.
The choice of $\delta$ and the admissibility condition \eqref{eq:admissibility}, where 
we can assume $\min\{\operatorname*{diam}(B_{R_{\tau}}),\operatorname*{diam}(B_{R_{\sigma}})\} = \sqrt{d}R_{\sigma}$ 
due to the symmetry $\mathbf{S}(\tau,\sigma) = \mathbf{S}(\sigma,\tau)^T$,
imply
\begin{equation*}
\operatorname{dist}(B_{(1+2\delta)R_{\sigma}},B_{R_{\tau}}) \geq \operatorname{dist}(B_{R_{\sigma}},B_{R_{\tau}})-\sqrt{d}\delta R_{\sigma}
\geq \sqrt{d}R_{\sigma}(\eta^{-1}-\delta) > 0.
\end{equation*}
Therefore, we have $\widetilde{\phi}|_{B_{(1+2\delta)R_{\sigma}}\cap\Gamma_{\rho}} = 
\phi_{\rho}|_{B_{(1+2\delta)R_{\sigma}}\cap\Gamma_{\rho}}$
and the orthogonality \eqref{eq:SchurOrthogonality} holds 
on the box $B_{(1+2\delta)R_{\sigma}}$. Thus, by definition of $\mathcal{H}_{h,0}$, we have 
that the potential  
$\widetilde{V}\widetilde{\phi} \in \mathcal{H}_{h,0}(B_{(1+2\delta)R_{\sigma}},\Gamma_{\rho})$.

As a consequence, Lemma~\ref{cor:lowdimapp} can be applied to $\widetilde{V}\widetilde{\phi}$
with $R := (1+\delta)R_{\sigma}$ and $\kappa := \frac{1}{2+\eta} = \frac{\delta}{1+\delta}$. Note that
$(1+\kappa)(1+\delta) = 1+2\delta$ and $1+\kappa^{-1} = 3+\eta$.
Hence, we get a low dimensional space $\widehat{W}_k$ of dimension
$\dim \widehat{W}_k \leq C_{\rm dim}(3+\eta)^dq^{-d}k^{d+1} =: r$, and
the best approximation $\widehat{\phi} = \Pi_{\widehat{W}_k}\widetilde{\phi}$
to $\widetilde{\phi}$ from the space $\widehat{W}_k$ satisfies
\begin{align*}
\norm{\widetilde{\phi}-\widehat{\phi}}_{L^2(B_{(1+\delta)R_{\sigma}}\cap\Gamma_{\rho})} &\lesssim 
R_{\sigma} h^{-3/2}q^k \triplenorm{\widetilde{V}\widetilde{\phi}}_{h,(1+2\delta)R_{\sigma}} \\
&\lesssim  h^{-3/2} e^{-b_1r^{1/(d+1)}}\norm{\widetilde{\phi}}_{H^{-1/2}(\Gamma)},
\end{align*}
where we defined $b_1 := -\frac{\ln(q)}{C_{\rm dim}^{1/(d+1)}}q^{d/(d+1)}(3+\eta)^{-d/(1+d)} > 0$ 
to obtain \linebreak $q^k = e^{-b_1r^{1/(d+1)}}$.
Therefore, we get
\begin{equation}\label{eq:Schurtemp1}
\abs{\skp{\widetilde{\phi}-\widehat{\phi},V\psi}_{L^2(B_{(1+\delta)R_{\sigma}}\cap\Gamma_{\rho})}} \lesssim 
 h^{-3/2} e^{-b_1r^{1/(d+1)}}\norm{\widetilde{\phi}}_{H^{-1/2}(\Gamma)}\norm{V\psi}_{L^2(\Gamma)}.
\end{equation}
The ellipticity of $V$, 
$\operatorname*{supp} (\widetilde{\phi} - \phi) = \operatorname*{supp} \phi_{\rho} \subset \overline{\Gamma_{\rho}}$,
 and the orthogonality \eqref{eq:SchurOrthogonality} yield
\begin{align}\label{eq:Schurtemp2}
\norm{\widetilde{\phi}-\phi}_{H^{-1/2}(\Gamma)}^2&\lesssim 
\skp{V(\widetilde{\phi}-\phi),\widetilde{\phi}-\phi}
= -\skp{V\phi,\widetilde{\phi}-\phi} \nonumber\\
 &\lesssim \norm{V\phi}_{H^{1/2}(\Gamma)}\norm{\widetilde{\phi}-\phi}_{H^{-1/2}(\Gamma)}  
\lesssim \norm{\phi}_{L^2(\Gamma)}\norm{\widetilde{\phi}-\phi}_{H^{-1/2}(\Gamma)}.
\end{align}
Thus, with the triangle inequality, \eqref{eq:Schurtemp2}, and the stability of $V:L^2(\Gamma)\rightarrow H^1(\Gamma)$, we 
can estimate \eqref{eq:Schurtemp1} by 
\begin{align*}
\abs{\skp{\widetilde{\phi}-\widehat{\phi},V\psi}_{L^2(B_{(1+\delta)R_{\sigma}}\cap\Gamma_{\rho})}} &\lesssim 
 h^{-3/2} e^{-br^{1/(d+1)}}\Big(
\norm{\widetilde{\phi}-\phi}_{H^{-1/2}(\Gamma)}\norm{V\psi}_{L^2(\Gamma)} \\
&\quad+\norm{\phi}_{H^{-1/2}(\Gamma)}\norm{V\psi}_{L^2(\Gamma)}\Big) \\
&\lesssim 
 h^{-3/2} e^{-br^{1/(d+1)}}\norm{\phi}_{L^2(\Gamma)}\norm{\psi}_{L^2(\Gamma)}. 
\end{align*}
For the second term in \eqref{eq:tmpSchur}, 
we exploit the asymptotic smoothness of the Green's function $G(\cdot,\cdot)$: 
Lemma~\ref{lem:lowrankGreensfunction}
can be applied with $B_Y = B_{R_{\sigma}}$ and $D_X = \Gamma \backslash B_{(1+\delta)R_{\sigma}}$, 
where the choice of $\delta$ implies 
\begin{equation}
\label{eq:degenerate-approximation-admissibility}
\operatorname{dist}(B_Y,D_X)\geq \frac{1}{2\sqrt{d}(1+\eta)} \operatorname*{diam}(B_Y).
\end{equation}
Therefore, we get an approximation $G_r(x,y) = \sum_{i=1}^r g_{1,i}(x) g_{2,i}(y)$ such that 
\begin{align}
\label{eq:degenerate-approximation-error}
\norm{G(x,\cdot)-{G}_r(x,\cdot)}_{L^{\infty}(B_{R_{\sigma}})} 
\!\lesssim\! \frac{1}{\operatorname{dist}(\{x\},B_{R_\sigma})^{d-2}}e^{-b_2r^{1/d}} 
\, \forall x \in \Gamma\setminus B_{(1+\delta)R_{\sigma}};
\end{align}
here, the constant $b_2>0$ depends only on $d$ and $\eta$. 
As a consequence of \eqref{eq:degenerate-approximation-admissibility} and
\eqref{eq:degenerate-approximation-error}, 
the rank-$r$ operator $V_r$ given by 
$V_r\psi(x):=\int_{B_{R_{\sigma}}\cap\Gamma}{G}_r(x,y)\psi(y)ds_y$ satisfies 
\begin{align*}
&\abs{\skp{\widetilde{\phi},(V-V_r)\psi}_{L^2(\Gamma \backslash B_{(1+\delta)R_{\sigma}})}}\\ 
&\qquad=\abs{\int_{\Gamma \backslash B_{(1+\delta)R_{\sigma}}}
\widetilde{\phi}(x)\int_{B_{R_{\sigma}}\cap\Gamma}(G(x,y)-{G}_r(x,y))\psi(y)ds_y ds_x} \\
&\qquad\lesssim 
\norm{\widetilde{\phi}}_{L^2(\Gamma)}
\sqrt{\operatorname*{meas}(\Gamma \cap B_{R_\sigma})}
\norm{G-\widetilde{G}_r}_{L^{\infty}\left((\Gamma \backslash B_{(1+\delta)R_{\sigma}}) \times (B_{R_{\sigma}}\cap\Gamma)\right)}
\norm{\psi}_{L^2(\Gamma)} \\
&\qquad\lesssim 
h^{-1/2}\delta^{2-d} R_\sigma^{(3-d)/2} e^{-b_2r^{1/d}}\norm{\widetilde{\phi}}_{H^{-1/2}(\Gamma)} \norm{\psi}_{L^2(\Gamma)} \\
&\qquad\lesssim 
h^{-1/2}e^{-b_2r^{1/d}} \norm{\phi}_{L^2(\Gamma)}
\norm{\psi}_{L^2(\Gamma)},
\end{align*}
where the last two inequalities follow from the inverse estimate Lemma~\ref{lem:inverseinequality}, 
the stability estimate \eqref{eq:Schurtemp2} for the mapping $\phi \mapsto \widetilde \phi$, the assumption 
$d \leq 3$ as well as $R_{\sigma} \leq \eta\operatorname*{diam}(\Omega)$, and the choice $\delta = \frac{1}{1+\eta}$. 
Here, the hidden constant additionally depends on $\eta$.
Therefore, we get 
\begin{align*}
&\abs{\skp{V\widetilde{\phi},\psi} - 
\skp{\widehat{\phi},V\psi}_{L^2(B_{(1+\delta)R_{\sigma}}\cap\Gamma_{\rho})} - 
\skp{\widetilde{\phi},V_r\psi}_{L^2(\Gamma \backslash B_{(1+\delta)R_{\sigma}})}} \\    
&\qquad\qquad\lesssim h^{-3/2}e^{-br^{1/(d+1)}}\norm{\phi}_{L^2(\Gamma)}
\norm{\psi}_{L^2(\Gamma)},
\end{align*}
with $b := \min\{b_1,b_2\}$.
Since the mapping 
$(\phi,\psi) \mapsto \skp{\widehat{\phi},V\psi}_{L^2(B_{(1+\delta)R_{\sigma}}\cap\Gamma_{\rho})} + 
\skp{\widetilde{\phi},V_r\psi}_{L^2(\Gamma \backslash B_{(1+\delta)R_{\sigma}})}$
defines a bounded bilinear form on $L^2(\Gamma)$,
there exists a linear operator $\widehat{V}:L^2(\Gamma)\rightarrow L^2(\Gamma)$ such that
\begin{equation*}
\skp{\widehat{\phi},V\psi}_{L^2(B_{(1+\delta)R_{\sigma}}\cap\Gamma_{\rho})} + 
\skp{\widetilde{\phi},V_r\psi}_{L^2(\Gamma \backslash B_{(1+\delta)R_{\sigma}})}
= \skp{\widehat{V}\phi,\psi},
\end{equation*}
and the dimension of the range of $\widehat{V}$ is bounded by $2r$.
Therefore, we get a
matrix $\mathbf{S}_{r}(\tau,\sigma)$ of rank $2r$ such that
\begin{align*}
\norm{\mathbf{S}(\tau,\sigma)-\mathbf{S}_{r}(\tau,\sigma)}_2 &= 
\sup_{\boldsymbol{\phi}\in\mathbb{R}^{\abs{\tau}},\boldsymbol{\psi}\in \mathbb{R}^{\abs{\sigma}}} 
\frac{\abs{\boldsymbol{\phi}^T(\mathbf{S}(\tau,\sigma)-\mathbf{S}_{r}(\tau,\sigma))
\boldsymbol{\psi}}}{\norm{\boldsymbol{\phi}}_2\norm{\boldsymbol{\psi}}_2} \\
&\leq C h^{d-3/2}e^{-br^{1/(d+1)}},
\end{align*}
where we have used \eqref{eq:isomorphism}.
 The estimate $\frac{1}{\norm{\mathbf{V}}_2}\lesssim h^{-d}$ from \cite[Lemma~12.6]{Steinbach}
and $h \simeq N^{-1/(d-1)}$ finish the proof.
\end{proof}

As a direct consequence of the representation \eqref{eq:SchurRepresentation} and the results from 
Section~\ref{sec:Approximation-solution}, we can get a blockwise rank-$r$ approximation 
of the inverse of the Schur complement
$\mathbf{S}(\tau,\tau)$. For the existence of the inverse $\mathbf{S}(\tau,\tau)^{-1}$, 
we refer to the next subsection. 
For a given right-hand side $f \in L^2(\Gamma)$, \eqref{eq:SchurRepresentation} implies that
solving $\mathbf{S}(\tau,\tau)\boldsymbol{\phi} = \mathbf{f}$
with $\mathbf{f} \in\mathbb{R}^{\abs{\tau}}$ defined by $\mathbf{f}_i = \skp{f,\chi_{i_{\tau}}}$,
is equivalent to solving 
$a(\widetilde{\phi},\psi) = \skp{f,\psi}$ 
for all $\psi \in S^{0,0}(\mathcal{T}_h)$  with $\operatorname*{supp} \psi \subset \overline{\Gamma_{\tau}}$. 
Let $\tau_1\times\sigma_1 \subset \tau\times\tau$ be an $\eta$-admissible subblock.
For $f \in L^2(\Gamma)$ with $\operatorname*{supp} f \subset B_{R_{\sigma_1}}\cap\Gamma$, the support properties
as well as the admissibility condition \eqref{eq:admissibility} 
for the cluster pair $(\tau_1,\sigma_1)$ imply the orthogonality
\begin{equation*}
a(\widetilde{\phi},\psi) = 0 \quad \forall \psi \in S^{0,0}(\mathcal{T}_h) \, \text{with} \, 
\operatorname*{supp} \psi \subset B_{R_{\tau_1}}\cap\overline{\Gamma_{\tau}}.
\end{equation*}
Therefore, we have $\widetilde{V}\widetilde{\phi} \in \mathcal{H}_{h,0}(B_{R_{\tau_1}},\Gamma_{\tau})$, and
Lemma~\ref{cor:lowdimapp} provides an approximation to $\widetilde{\phi}$
on $B_{R_{\tau_1}}\cap\Gamma_{\tau}$. Then, a rank-$r$ factorization of the subblock 
$\mathbf{S}(\tau,\tau)^{-1}|_{\tau_1\times\sigma_1}$ can be constructed as in Section~\ref{sec:H-matrix-approximation}, 
which is summarized in the following theorem.

\begin{theorem}
Let $\tau \subset \mathcal{I}$, $\rho := \{i\in \mathcal{I} : i < \min(\tau)\}$,
$\tau_1\times\sigma_1 \subset \tau\times\tau$ be $\eta$-admissible, and let
the Schur complement $\mathbf{S}(\tau,\tau)$ be defined in \eqref{eq:defSchur}.
Then, there exist rank-$r$ matrices 
$\mathbf{X}_{\tau_1\sigma_1} \in \mathbb{R}^{\abs{\tau_1}\times r}$, $\mathbf{Y}_{\tau_1\sigma_1} \in \mathbb{R}^{\abs{\sigma_1}\times r}$
such that 
\begin{equation}
\norm{\mathbf{S}(\tau,\tau)^{-1}|_{\tau_1\times\sigma_1} - \mathbf{X}_{\tau_1\sigma_1}\mathbf{Y}_{\tau_1\sigma_1}^T}_2 
\leq C_{\rm apx} N^{(d+2)/(d-1)} e^{-br^{1/(d+1)}}.
\end{equation}
The constants $C_{\rm apx}$ depends only on  
$\Omega$, $d$, and the $\gamma$-shape regularity of the quasiuniform triangulation $\mathcal{T}_h$,
and the constant $b>0$ additionally depends on $\eta$.
\end{theorem}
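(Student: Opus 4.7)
The plan is to mirror the structure of the proof of Theorem~\ref{th:blockapprox}, replacing the Galerkin problem for $\mathbf V$ with the constrained variational problem encoded in $\mathbf S(\tau,\tau)$: first establish a function-space approximation result analogous to Proposition~\ref{thm:function-approximation} for the solution of the constrained system, and then transfer that estimate to the matrix level via the local isomorphisms $\Lambda_{\tau_1}, \Phi_{\tau_1}$ and the continuity of the adjoint $\Lambda_{\mathcal I}^*$, exactly as in the proof of Theorem~\ref{th:blockapprox}.

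For the function-space step, the representation \eqref{eq:SchurRepresentation}--\eqref{eq:SchurOrthogonality} shows that solving $\mathbf S(\tau,\tau)\boldsymbol\phi = \mathbf f$ with $\mathbf f_i = \skp{f,\chi_{i_\tau}}$ is equivalent to finding $\widetilde\phi = \phi + \phi_\rho \in S^{0,0}(\mathcal T_h)$ with $\operatorname{supp}\phi\subset\overline{\Gamma_\tau}$ and $\operatorname{supp}\phi_\rho\subset\overline{\Gamma_\rho}$ such that $\skp{V\widetilde\phi,\psi} = \skp{f,\psi}$ for every $\psi\in S^{0,0}(\mathcal T_h)$ with $\operatorname{supp}\psi\subset\overline{\Gamma_\tau}$, together with the orthogonality $\skp{V\widetilde\phi,\widehat\psi} = 0$ for $\widehat\psi\in S^{0,0}(\mathcal T_h)$ supported in $\overline{\Gamma_\rho}$. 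For $f$ supported in $B_{R_{\sigma_1}}\cap\Gamma$, the admissibility of $(\tau_1,\sigma_1)$ combines these two orthogonalities into $\skp{V\widetilde\phi,\psi} = 0$ for all $\psi\in S^{0,0}(\mathcal T_h)$ with $\operatorname{supp}\psi\subset B_{R_{\tau_1}}\cap\overline{\Gamma_\tau}$, so that the potential $u := \widetilde V\widetilde\phi$ lies in $\mathcal H_{h,0}(B_{R_{\tau_1}},\Gamma_\tau)$. Lemma~\ref{cor:lowdimapp} applied to $u$ with $\kappa := 1/(1+\eta)$ then furnishes a subspace $\widehat W_k$ of dimension at most $C_{\rm dim}(2+\eta)^d q^{-d}k^{d+1}$ whose normal-jump image approximates $-\phi|_{\Gamma_{\tau_1}}$ at rate $q^k$ in $L^2(B_{R_{\tau_1}}\cap\Gamma_{\tau_1})$, up to a factor $R_{\tau_1}/h^{3/2}$ times $\triplenorm{u}_{h,(1+\kappa)R_{\tau_1}}$.

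The required stability bound $\triplenorm{u}_{h,(1+\kappa)R_{\tau_1}}\lesssim h^{-1/2}\norm{f}_{L^2(\Gamma)}$ is obtained following the end of the proof of Proposition~\ref{thm:function-approximation}: the ellipticity of $V$, the orthogonality against $\phi_\rho$, and the inverse estimate \eqref{eq:inverse-a} yield $\norm{\widetilde\phi}_{H^{-1/2}(\Gamma)}^2 \lesssim \skp{V\widetilde\phi,\widetilde\phi} = \skp{V\widetilde\phi,\phi} = \skp{f,\phi} \lesssim h^{-1/2}\norm{f}_{L^2(\Gamma)}\norm{\widetilde\phi}_{H^{-1/2}(\Gamma)}$, and the continuity of $\widetilde V\colon H^{-1/2}(\Gamma)\rightarrow H^1_{\rm loc}(\mathbb R^d)$ converts this into the claimed control of $\triplenorm{u}_{h,(1+\kappa)R_{\tau_1}}$. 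Together these produce a function-space bound $\norm{\phi-w}_{L^2(B_{R_{\tau_1}}\cap\Gamma_{\tau_1})} \lesssim h^{-2}q^k\norm{f}_{L^2(\Gamma)}$ from a space of dimension $C_{\rm dim}(2+\eta)^dq^{-d}k^{d+1}$. The matrix-level step then reproduces, essentially verbatim, the argument from the proof of Theorem~\ref{th:blockapprox}: for $\mathbf b \in \mathbb R^{|\sigma_1|}$ set $f := (\Lambda_{\mathcal I}^*\mathbf b)|_{\sigma_1}$, take the columns of $\mathbf X_{\tau_1\sigma_1}$ to form an orthonormal basis of $\Lambda_{\tau_1}\widehat W_k$, and put $\mathbf Y_{\tau_1\sigma_1} := (\mathbf S(\tau,\tau)^{-1}|_{\tau_1\times\sigma_1})^T \mathbf X_{\tau_1\sigma_1}$. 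The isomorphism bound \eqref{eq:isomorphism} and the adjoint estimate $\norm{\Lambda_{\mathcal I}^*\mathbf b}_{L^2(\Gamma)} \lesssim h^{-d/2}\norm{\mathbf b}_2$ convert the function-space error into a spectral-norm bound with prefactor $h^{-(d+2)}\simeq N^{(d+2)/(d-1)}$, and choosing $r = C_{\rm dim}(2+\eta)^d q^{-d}k^{d+1}$ together with the standard substitution $q^k = e^{-br^{1/(d+1)}}$ used in the proof of Theorem~\ref{th:Happrox} yields the stated estimate.

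I expect the principal technical hurdle to be the careful verification that $\widetilde V\widetilde\phi$ really belongs to $\mathcal H_{h,0}(B_{R_{\tau_1}},\Gamma_\tau)$ — in particular, simultaneously arranging the support condition on $[\partial_n\widetilde V\widetilde\phi]|_{B_{R_{\tau_1}}\cap\Gamma}$ and the $L^2$-orthogonality against piecewise constants supported in $B_{R_{\tau_1}}\cap\overline{\Gamma_\tau}$ — together with controlling the auxiliary correction $\phi_\rho$ in the stability argument; once these ingredients are in place, the remainder of the proof is a direct transcription of the function-space-to-matrix machinery already developed for Theorem~\ref{th:blockapprox}.
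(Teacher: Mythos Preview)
Your proposal is correct and follows essentially the same approach as the paper: the paper's argument (given as a sketch immediately before the theorem statement) proceeds exactly as you outline, by interpreting $\mathbf S(\tau,\tau)^{-1}$ via the constrained variational problem \eqref{eq:SchurRepresentation}--\eqref{eq:SchurOrthogonality}, establishing that the potential $\widetilde V\widetilde\phi$ lies in the appropriate $\mathcal H_{h,0}$-space, invoking Lemma~\ref{cor:lowdimapp}, and then transferring to the matrix level as in Section~\ref{sec:H-matrix-approximation}. One small point worth tightening (shared by the paper's sketch): since $\widetilde\phi = \phi + \phi_\rho$ may have support in $\overline{\Gamma_{\tau\cup\rho}}$ rather than $\overline{\Gamma_\tau}$ inside $B_{(1+\kappa)R_{\tau_1}}$, the correct space is $\mathcal H_{h,0}(B_{(1+\kappa)R_{\tau_1}},\Gamma_{\tau\cup\rho})$, but Lemma~\ref{cor:lowdimapp} applies equally well with that choice and on $\Gamma_{\tau_1}\subset\Gamma_\tau$ one has $\widetilde\phi = \phi$, so the rest of your argument goes through unchanged.
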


\subsection{Existence of $\mathcal{H}$-Cholesky decomposition: conclusion of the proof of Theorem~\ref{th:HLU}}

In this subsection, we will use the approximation of the Schur complement from the previous section
to prove the existence of an (approximate) $\mathcal{H}$-Cholesky decomposition. 
We start with a hierarchical relation of the Schur complements $\mathbf{S}(\tau,\tau)$. \newline 

The Schur complements $\mathbf{S}(\tau,\tau)$ for a block $\tau \in \mathbb{T}_{\mathcal{I}}$ 
can be derived from the Schur complements of its sons $\tau_1$, $\tau_2$ by 
\begin{equation*}
\mathbf{S}(\tau,\tau) = \begin{pmatrix} \mathbf{S}(\tau_1,\tau_1) & \mathbf{S}(\tau_1,\tau_2) \\ 
\mathbf{S}(\tau_2,\tau_1) & \mathbf{S}(\tau_2,\tau_2) + \mathbf{S}(\tau_2,\tau_1)\mathbf{S}(\tau_1,\tau_1)^{-1}\mathbf{S}(\tau_1,\tau_2) \end{pmatrix},
\end{equation*}
A proof of this relation can be found in \cite[Lemma 3.1]{Bebendorf07}. One should note that
the proof does not use any properties of the matrix $\mathbf{V}$ other than invertibility and 
existence of a Cholesky decomposition. 
Moreover, we have by definition of $\mathbf{S}(\tau,\tau)$ that $\mathbf{S}(\mathcal{I},\mathcal{I}) = \mathbf{V}$.

If $\tau$ is a leaf, we get the Cholesky decomposition of $\mathbf{S}(\tau,\tau)$ by the classical 
Cholesky decomposition,
which exists since $\mathbf{V}$ has a Cholesky decomposition.
If $\tau$ is not a leaf, we use the hierarchical relation of the Schur complements to
define a Cholesky decomposition of the Schur complement
 $\mathbf{S}(\tau,\tau)$ by 
\begin{equation}\label{eq:LUdefinition}
\mathbf{C}(\tau) := \begin{pmatrix} \mathbf{C}(\tau_1) & 0 \\ \mathbf{S}(\tau_2,\tau_1)(\mathbf{C}(\tau_1)^T)^{-1} & \mathbf{C}(\tau_2)  \end{pmatrix}, \quad 
\end{equation}
with $\mathbf{S}(\tau_1,\tau_1) = \mathbf{C}(\tau_1)\mathbf{C}(\tau_1)^T$,
 $\mathbf{S}(\tau_2,\tau_2) = \mathbf{C}(\tau_2)\mathbf{C}(\tau_2)^T$ and indeed get 
$\mathbf{S}(\tau,\tau) = \mathbf{C}(\tau) \mathbf{C}(\tau)^T$.
Moreover, the uniqueness of the Cholesky decomposition of $\mathbf{V}$ implies that due to
$\mathbf{C}\mathbf{C}^T = \mathbf{V} = \mathbf{S}(\mathcal{I},\mathcal{I}) = \mathbf{C}(\mathcal{I})\mathbf{C}(\mathcal{I})^T$, we have
$\mathbf{C} = \mathbf{C}(\mathcal{I})$.

The existence of the inverse $\mathbf{C}(\tau_1)^{-1}$
follows from the representation \eqref{eq:LUdefinition}
by induction over the levels, since on a leaf the existence is clear and the matrices 
$\mathbf{C}(\tau)$ are block triangular matrices. Consequently, the inverse of
$\mathbf{S}(\tau,\tau)$ exists. 

Moreover, as shown in \cite[Lemma~{22}]{GrasedyckKriemannLeBorne}  in the context of $LU$-factorizations
instead of Cholesky decompositions, the restriction of the lower triangular part \linebreak
$\mathbf{S}(\tau_2,\tau_1)(\mathbf{C}(\tau_1)^T)^{-1}$ 
of the matrix $\mathbf{C}(\tau)$ to a subblock $\tau_2'\times\tau_1'$
with $\tau_i'$ a son of $\tau_i$ satisfies
\begin{equation}
\label{eq:foo}
\left(\mathbf{S}(\tau_2,\tau_1)(\mathbf{C}(\tau_1)^T)^{-1}\right)|_{\tau_2'\times\tau_1'} = 
\mathbf{S}(\tau_2',\tau_1')(\mathbf{C}(\tau_1')^T)^{-1}.
\end{equation}

The following lemma shows that the spectral norm of the inverse
$\mathbf{C}(\tau)^{-1}$ can be bounded by the norm of the inverse
$\mathbf{C}(\mathcal{I})^{-1}$.

\begin{lemma}\label{lem:LUnorm}
For $\tau\in \mathbb{T}_{\mathcal{I}}$,
let $\mathbf{C}(\tau)$ be given by \eqref{eq:LUdefinition}. Then,
\begin{align*}
\max_{\tau\in\mathbb{T}_{\mathcal{I}}}\norm{\mathbf{C}(\tau)^{-1}}_2 = \norm{\mathbf{C}(\mathcal{I})^{-1}}_2. 
\end{align*}
\end{lemma}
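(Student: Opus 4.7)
The plan is to exploit the block lower triangular structure of $\mathbf{C}(\tau)$ given in \eqref{eq:LUdefinition} and proceed by induction on the cluster tree. Since $\mathcal{I} \in \mathbb{T}_{\mathcal{I}}$, the inequality $\|\mathbf{C}(\mathcal{I})^{-1}\|_2 \leq \max_{\tau \in \mathbb{T}_{\mathcal{I}}}\|\mathbf{C}(\tau)^{-1}\|_2$ is trivial, so the content lies in the reverse bound $\|\mathbf{C}(\tau)^{-1}\|_2 \leq \|\mathbf{C}(\mathcal{I})^{-1}\|_2$ for every $\tau \in \mathbb{T}_{\mathcal{I}}$.

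First I would prove the one-step monotonicity statement: if $\tau$ has sons $\tau_1$, $\tau_2$, then $\|\mathbf{C}(\tau_i)^{-1}\|_2 \leq \|\mathbf{C}(\tau)^{-1}\|_2$ for $i=1,2$. Since $\mathbf{C}(\tau)$ has the block lower triangular form \eqref{eq:LUdefinition}, the inverse is again block lower triangular with diagonal blocks $\mathbf{C}(\tau_1)^{-1}$ and $\mathbf{C}(\tau_2)^{-1}$. For $\mathbf{y}\in\mathbb{R}^{|\tau_2|}$ the padded vector $(0,\mathbf{y})^T\in \mathbb{R}^{|\tau|}$ satisfies
\begin{equation*}
\mathbf{C}(\tau)^{-1}\begin{pmatrix}0\\ \mathbf{y}\end{pmatrix} = \begin{pmatrix}0\\ \mathbf{C}(\tau_2)^{-1}\mathbf{y}\end{pmatrix},
\end{equation*}
so $\|\mathbf{C}(\tau_2)^{-1}\mathbf{y}\|_2 = \|\mathbf{C}(\tau)^{-1}(0,\mathbf{y})^T\|_2 \leq \|\mathbf{C}(\tau)^{-1}\|_2\,\|\mathbf{y}\|_2$, which gives $\|\mathbf{C}(\tau_2)^{-1}\|_2 \leq \|\mathbf{C}(\tau)^{-1}\|_2$. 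For the first son, I would apply $\mathbf{C}(\tau)^{-1}$ to $(\mathbf{x},0)^T$ and discard the second block component: the first block component is $\mathbf{C}(\tau_1)^{-1}\mathbf{x}$, and dropping coordinates does not increase the Euclidean norm, so $\|\mathbf{C}(\tau_1)^{-1}\mathbf{x}\|_2 \leq \|\mathbf{C}(\tau)^{-1}(\mathbf{x},0)^T\|_2 \leq \|\mathbf{C}(\tau)^{-1}\|_2\,\|\mathbf{x}\|_2$.

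Once this one-level comparison is established, iterating it along the (finite) path in the binary tree $\mathbb{T}_{\mathcal{I}}$ from the root $\mathcal{I}$ down to any cluster $\tau$ gives $\|\mathbf{C}(\tau)^{-1}\|_2 \leq \|\mathbf{C}(\mathcal{I})^{-1}\|_2$. Taking the maximum over $\tau \in \mathbb{T}_{\mathcal{I}}$ and combining with the trivial lower bound yields the claimed equality. I do not anticipate a real obstacle here: the only subtlety is recognizing that $\mathbf{C}(\tau)^{-1}$ inherits the block lower triangular structure from $\mathbf{C}(\tau)$ (which follows from the standard formula for the inverse of a $2\times 2$ block triangular matrix, using that $\mathbf{C}(\tau_1)$ and $\mathbf{C}(\tau_2)$ are invertible by the induction established just after \eqref{eq:LUdefinition}).
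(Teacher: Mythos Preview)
Your proposal is correct and follows essentially the same approach as the paper: establish the one-step monotonicity $\|\mathbf{C}(\tau_i)^{-1}\|_2 \leq \|\mathbf{C}(\tau)^{-1}\|_2$ from the block lower triangular structure of $\mathbf{C}(\tau)^{-1}$, then iterate along the tree. The only cosmetic difference is that for the bound on $\|\mathbf{C}(\tau_1)^{-1}\|_2$ the paper passes to $(\mathbf{C}(\tau)^{-1})^T$ (which is block \emph{upper} triangular) and repeats the padding argument, whereas you apply $\mathbf{C}(\tau)^{-1}$ to $(\mathbf{x},0)^T$ and discard the second block component; both are immediate consequences of the same block structure.
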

\begin{proof}
With the block structure of \eqref{eq:LUdefinition}, we get the inverse
\begin{equation*}
\mathbf{C}(\tau)^{-1} = \begin{pmatrix} \mathbf{C}(\tau_1)^{-1} & 0 \\ 
-\mathbf{C}(\tau_2)^{-1} \mathbf{S}(\tau_2,\tau_1)(\mathbf{C}(\tau_1)^T)^{-1}\mathbf{C}(\tau_1)^{-1} & 
\mathbf{C}(\tau_2)^{-1} \end{pmatrix}.
\end{equation*}
So, we get by choosing $\mathbf{x}$ such that $\mathbf{x}_i = 0$ for $i \in \tau_1$ that
\begin{align*}
\norm{\mathbf{C}(\tau)^{-1}}_2 &= \sup_{\mathbf{x}\in \mathbb{R}^{\abs{\tau}},\norm{x}_2=1}
\norm{\mathbf{C}(\tau)^{-1}\mathbf{x}}_2 \\ 
&\geq \sup_{\mathbf{x}\in \mathbb{R}^{\abs{\tau_2}},\norm{x}_2=1}\norm{\mathbf{C}(\tau_2)^{-1}\mathbf{x}}_2 
= \norm{\mathbf{C}(\tau_2)^{-1}}_2.
\end{align*}
The same argument for $\left(\mathbf{C}(\tau)^{-1}\right)^T$ leads to 
\begin{align*}
\norm{\mathbf{C}(\tau)^{-1}}_2 = \norm{\left(\mathbf{C}(\tau)^{-1}\right)^T}_2 \geq \norm{\mathbf{C}(\tau_1)^{-1}}_2.
\end{align*}
Thus, we have $\norm{\mathbf{C}(\tau)^{-1}}_2 \geq \max_{i=1,2}\norm{\mathbf{C}(\tau_i)^{-1}}_2$ 
and as a consequence \linebreak
$\max_{\tau\in\mathbb{T}_{\mathcal{I}}}\norm{\mathbf{C}(\tau)^{-1}}_2 = \norm{\mathbf{C}(\mathcal{I})^{-1}}_2$.
\end{proof}

We are now in position to prove Theorem~\ref{th:HLU}:

\begin{proof}[Proof of Theorem~\ref{th:HLU}]
In the following, we show that every admissible subblock $\tau\times\sigma$ of $\mathbf{C}(\mathcal{I})$, 
recursively defined by \eqref{eq:LUdefinition}, has a rank-$r$ approximation. Since an admissible block
of the lower triangular part of $\mathbf{C}(\mathcal{I})$
has to be a subblock of a matrix $\mathbf{C}(\tau')$ for some 
$\tau' \in \mathbb{T}_{\mathcal{I}}$, we get in view of \eqref{eq:foo} that 
$\mathbf{C}(\mathcal{I})|_{\tau\times\sigma} = \mathbf{S}(\tau,\sigma)(\mathbf{C}(\sigma)^T)^{-1}$.
Theorem~\ref{lem:Schur} provides a rank-$r$ approximation 
${\mathbf S}_{r}(\tau,\sigma)$ to ${\mathbf S}(\tau,\sigma)$. Therefore, we can estimate 
\begin{align*}
\norm{\mathbf{C}(\mathcal{I})|_{\tau\times\sigma} - \mathbf{S}_{r}(\tau,\sigma)(\mathbf{C}(\sigma)^T)^{-1}}_2 
&=\norm{\left(\mathbf{S}(\tau,\sigma)-\mathbf{S}_{r}(\tau,\sigma)\right)(\mathbf{C}(\sigma)^T)^{-1}}_2 \\
&\leq C_{\rm sc}  N^{3/(2d-2)}  e^{-br^{1/(d+1)}}\norm{(\mathbf{C}(\sigma)^T)^{-1}}_2\norm{\mathbf{V}}_2.
\end{align*} 
Since $\mathbf{S}_{r}(\tau,\sigma)(\mathbf{C}(\sigma)^T)^{-1}$ is a rank-$r$ matrix for each $\eta$-admissible 
cluster pair $(\tau,\sigma)$, we immediately get an $\mathcal{H}$-matrix approximation $\mathbf{C}_{\mathcal{H}}$
of the Cholesky factor $\mathbf{C}(\mathcal{I}) = \mathbf{C}$. With Lemma~\ref{lem:spectralnorm} 
and Lemma~\ref{lem:LUnorm}, we get
\begin{equation*}
\norm{\mathbf{C}-\mathbf{C}_{\mathcal{H}}}_2\leq C_{\rm sc} C_{\rm sp} N^{3/(2d-2)}\operatorname{depth}(\mathbb{T}_{\mathcal{I}})e^{-br^{1/(d+1)}}
\norm{\mathbf{C}^{-1}}_2\norm{\mathbf{V}}_2,
\end{equation*}
and with $\norm{\mathbf{V}}_2 = \norm{\mathbf{C}}_2^2$, we conclude the proof of (i). 

Since $\mathbf{V}=\mathbf{C}\mathbf{C}^T$, the triangle inequality finally leads to
\begin{align*}
\norm{\mathbf{V}-\mathbf{C}_{\mathcal{H}}\mathbf{C}_{\mathcal{H}}^T}_2 &\leq 
\norm{\mathbf{C}-\mathbf{C}_{\mathcal{H}}}_2\norm{\mathbf{C}^T}_2 + 
\norm{\mathbf{C}^T-\mathbf{C}_{\mathcal{H}}^T}_2 \norm{\mathbf{C}}_2 \\
& \phantom{\leq}\, + 
\norm{\mathbf{C}-\mathbf{C}_{\mathcal{H}}}_2\norm{\mathbf{C}^T-\mathbf{C}_{\mathcal{H}}^T}_2 \\
&\leq 2C_{\rm sc}C_{\rm sp}\kappa_2(\mathbf{C})
\operatorname{depth}(\mathbb{T}_{\mathcal{I}})N^{3/(2d-2)} e^{-br^{1/(d+1)}}\norm{\mathbf{V}}_2\\
 &\phantom{\leq}\, +\kappa_2(\mathbf{C})^2C_{\rm sc}^2C_{\rm sp}^2 
\operatorname{depth}(\mathbb{T}_{\mathcal{I}})^2N^{3/(d-1)} e^{-2br^{1/(d+1)}}
\frac{\norm{\mathbf{V}}_2^2}{\norm{\mathbf{C}}_2^2},
\end{align*}
and the equality $\kappa_2(\mathbf{V}) = \kappa_2(\mathbf{C})^2$ finishes the proof.
\end{proof}

\section{Extensions}
\label{sec:extensions}
\subsection{The Poincar\'{e}-Steklov operator}

The interior Poincar\'{e}-Steklov operator $S^{\rm int}$ is defined as 
$S^{\rm int}:=V^{-1}\left(\frac{1}{2}I+K\right) : H^{1/2}(\Gamma) \rightarrow H^{-1/2}(\Gamma)$, 
where $K$ denotes the double-layer operator. In a similar way, the 
exterior Poincar\'{e}-Steklov operator $S^{\rm ext}$ is given by 
$S^{\rm ext}=  - V^{-1} (\frac{1}{2}I - K)$. 

The discrete Poincar\'{e}-Steklov operators are given by 
$\mathbf{S}^{\rm int} = \mathbf{V}^{-1}\left(\frac{1}{2}\mathbf{M}+\mathbf{K}\right)$ and 
$\mathbf{S}^{\rm ext} = -\mathbf{V}^{-1}\left(\frac{1}{2}\mathbf{M}-\mathbf{K}\right)$, 
where $\mathbf{K}$ is the stiffness matrices corresponding to $K$, 
and $\mathbf{M}$ is the mass matrix. Here, we consider piecewise affine basis functions 
for the the discretizations $\mathbf{K},\mathbf{M}$.
We illustrate the use of the $\mathcal{H}$-arithmetic to 
derive $\mathcal{H}$-matrix approximations
to the discrete Poincar\'{e}-Steklov operators, which is stated in the following corollary
of Theorem~\ref{th:Happrox}.
\begin{corollary}
Fix $\eta>0$.   Let a partition $P$ of $\mathcal{I}\times \mathcal{I}$ be based on a cluster tree 
$\mathbb{T}_{\mathcal{I}}$ created by the geometric clustering algorithm from \cite[Section 5.4.2]{HackbuschBuch}.
Let $\mathbf{S} \in \{{\mathbf S}^{\rm int}, {\mathbf S}^{\rm ext}\}$. 
Then, there is a blockwise rank-$r$ matrix $\mathbf{S}_{\mathcal{H}}$ such that
\begin{equation}\label{eq:PSresult}
\norm{\mathbf{S}-\mathbf{S}_{\mathcal{H}}}_2 \leq C_{\rm PS} N^{(d+2)/(d-1)} \log N 
\exp\left(-b\left(\frac{r}{\log N +1}\right)^{1/(d+1)}\right).
\end{equation}
The constant $C_{\rm PS}>0$ depends only on $\Omega,d$, 
and the $\gamma$-shape regularity of the quasiuniform triangulation $\mathcal{T}_h$, and the 
constant $b>0$ depends additionally on $\eta$.
\end{corollary}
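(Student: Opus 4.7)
The plan is to realize $\mathbf{S}$ as the (approximate) product of the $\mathcal{H}$-matrix approximation of $\mathbf{V}^{-1}$ furnished by Theorem~\ref{th:Happrox} with an $\mathcal{H}$-matrix approximation of the right factor $\tfrac{1}{2}\mathbf{M}\pm\mathbf{K}$, and then to invoke the truncated $\mathcal{H}$-matrix multiplication from the standard arithmetic in \cite{GrasedyckHackbusch,HackbuschBuch} to produce $\mathbf{S}_{\mathcal H}$. The argument splits naturally into three steps: assembling the two factor-wise approximations, performing the formatted multiplication, and propagating the errors.

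First, I would collect the input approximations. For an input rank $r_0$, Theorem~\ref{th:Happrox}, together with the standing assumptions on the geometric clustering (depth of $\mathbb T_{\mathcal I}$ is $\mathcal O(\log N)$ and $C_{\rm sp}$ is bounded), yields a blockwise rank-$r_0$ matrix $\mathbf{W}_{\mathcal{H}}$ with $\|\mathbf{V}^{-1}-\mathbf{W}_{\mathcal{H}}\|_2\lesssim N^{(d+2)/(d-1)}\log N\,e^{-br_0^{1/(d+1)}}$. The double-layer matrix $\mathbf{K}$ admits a classical blockwise rank-$r_0$ approximation $\mathbf{K}_{\mathcal H}$ obtained by kernel interpolation in the spirit of Lemma~\ref{lem:lowrankGreensfunction} (applied to the kernel $\partial_{n_y}G$), with exponential rate of type $e^{-b'r_0^{1/d}}$; see \cite{BoermGrasedyck04,HackbuschBuch}. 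The mass matrix $\mathbf{M}$ is sparse and is represented exactly in the $\mathcal{H}$-matrix format on any admissible partition.

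Second, I would apply the truncated $\mathcal{H}$-matrix multiplication to define
\begin{equation*}
\mathbf{S}_{\mathcal H} := \mathbf{W}_{\mathcal H}\odot\bigl(\tfrac{1}{2}\mathbf{M}\pm\mathbf{K}_{\mathcal H}\bigr),
\end{equation*}
where $\odot$ denotes the recursive block multiplication followed by rank truncation. The key structural property I rely on is the rank bookkeeping: multiplying two $\mathcal{H}$-matrices of blockwise rank at most $r_0$ on a partition of depth $L$ with sparsity constant $C_{\rm sp}$ yields an $\mathcal{H}$-matrix that can be re-truncated to blockwise rank at most $r\leq C\,C_{\rm sp}^2(L+1)\,r_0$. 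Equivalently, a global rank budget $r$ for the product corresponds to input ranks $r_0\sim r/(L+1)\sim r/(\log N+1)$, which is exactly the exponent appearing in \eqref{eq:PSresult}. Standard error-propagation estimates for formatted multiplication then give
\begin{align*}
\|\mathbf{S}-\mathbf{S}_{\mathcal H}\|_2
&\leq \|\mathbf{V}^{-1}-\mathbf{W}_{\mathcal H}\|_2\,\|\tfrac{1}{2}\mathbf{M}\pm\mathbf{K}\|_2
 + \|\mathbf{W}_{\mathcal H}\|_2\,\|\mathbf{K}-\mathbf{K}_{\mathcal H}\|_2\\
&\quad + C\,C_{\rm sp}^2(L+1)\,\|\mathbf{W}_{\mathcal H}\|_2\,\|\tfrac{1}{2}\mathbf{M}\pm\mathbf{K}_{\mathcal H}\|_2\,e^{-b r_0^{1/(d+1)}}.
\end{align*}

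Finally, using the uniform (polynomial-in-$N$) bounds on $\|\mathbf{M}\|_2$, $\|\mathbf{K}\|_2$ and $\|\mathbf{V}^{-1}\|_2$ from the standard BEM literature \cite{SauterSchwab,Steinbach}, together with the depth bound $L\lesssim\log N$ and the substitution $r_0\sim r/(\log N+1)$, one collects the algebraic factors into a single $N^{(d+2)/(d-1)}\log N$ prefactor and concludes \eqref{eq:PSresult}. The main obstacle is the bookkeeping rather than any single estimate: one must carefully track how rank-growth and truncation error compose through the recursive multiplication, and verify that the slower rate $e^{-b'r_0^{1/d}}$ available for $\mathbf{K}$ is dominated by the rate $e^{-br_0^{1/(d+1)}}$ for $\mathbf{V}^{-1}$ (which it is, for $r_0\ge 1$), so that the final exponent is governed by the inverse-operator approximation.
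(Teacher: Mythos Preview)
Your proposal follows essentially the same approach as the paper: approximate $\mathbf{V}^{-1}$ via Theorem~\ref{th:Happrox}, approximate $\mathbf{K}$ by kernel interpolation, represent $\mathbf{M}$ exactly, and then appeal to the $\mathcal{H}$-matrix multiplication arithmetic to conclude that the product has blockwise rank bounded by a factor $C_{\rm id}C_{\rm sp}(\operatorname{depth}(\mathbb{T}_{\mathcal{I}})+1)\sim\log N$ times the input rank, which is precisely the origin of the $r/(\log N+1)$ in the exponent.

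One simplification worth noting: the paper invokes the \emph{exact} $\mathcal{H}$-matrix product (\cite[Theorem~2.24]{GrasedyckHackbusch}), not the truncated one. The exact product of two blockwise rank-$r_0$ $\mathcal{H}$-matrices is already an $\mathcal{H}$-matrix of blockwise rank at most $C_{\rm id}C_{\rm sp}(\operatorname{depth}(\mathbb{T}_{\mathcal{I}})+1)r_0$, so no truncation and hence no third error term is needed; the error analysis reduces to the two-term telescoping estimate you wrote first. Your third term, with its $e^{-br_0^{1/(d+1)}}$ factor, does not correspond to a standard truncation error bound and is in fact superfluous for the existence statement being proved. Dropping it makes the bookkeeping you mention straightforward.
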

\begin{proof}
By $\mathcal{H}$-arithmetic, \cite[Theorem 2.24]{GrasedyckHackbusch}, the rank of the multiplication
of $\mathcal{H}$-matrices increases
by a factor of $C_{\rm id}C_{\rm sp}(\operatorname{depth}(\mathbb{T}_{\mathcal{I}})+1)$, where the appearing idempotency constant 
$C_{\rm id}$ is defined in \cite{GrasedyckHackbusch} and can be bounded uniformly in $N$ for 
geometrically balanced cluster trees. 
\end{proof}

\begin{remark}
In computations, due to stability reasons, usually the symmetric formulation of the Poincar\'{e}-Steklov operator
$S^{\rm int}:=W+\left(\frac{1}{2}I+K'\right)V^{-1}\left(\frac{1}{2}I+K\right)$ with the hypersingular integral operator
$W:H^{1/2}(\Gamma)\rightarrow H^{-1/2}(\Gamma)$ and the adjoint double-layer potential $K'$ is used, see, e.g., \cite{Steinbach}. 
Using $\mathcal{H}$-arithmetics, an approximation
for this representation can be derived as well, but leads to an additional logarithmic factor in the exponential in
\eqref{eq:PSresult}. Since we are only interested in an existence result, the non symmetric formulation is sufficient
for our purpose.
\end{remark}

\subsection{$\mathcal{H}^2$-approximation}

In this section, we briefly describe how an approximation in the more refined framework of $\mathcal{H}^2$-matrices can be derived.
The main advantage of $\mathcal{H}^2$-matrices compared to $\mathcal{H}$-matrices is that the storage complexity
as well as the complexity of the matrix-vector multiplication is 
$\mathcal{O}(rN)$, i.e., linear in the degrees of freedom
instead of the logarithmic-linear complexity of $\mathcal{O}(rN \log N)$ for $\mathcal{H}$-matrices.

In fact, \cite{Boerm} proves that an $\mathcal{H}^2$-approximation can be derived from 
blockwise estimates as in Theorem~\ref{th:blockapprox}. 
Therefore, this can be done in the same way, and this section follows the lines of \cite{Boerm}.

$\mathcal{H}^2$-matrices are based on nested cluster bases, which are defined in the following.

\begin{definition}[Nested cluster basis]
A family of matrices $(\mathbf{U}_{\tau})_{\tau \in \mathbb{T}_I}, \mathbf{U}_{\tau} \in \mathbb{R}^{\abs{\tau}\times r}$ is said to be 
a {\rm nested cluster basis}, if there exists a family of {\rm transfer matrices} 
$(\mathbf{T}_{\tau})_{\tau \in \mathbb{T}_{\mathcal{I}}}, \mathbf{T}_{\tau}\in \mathbb{R}^{r\times r}$ such that
$
\mathbf{U}_{\tau}|_{\tau'} = \mathbf{U}_{\tau'}\mathbf{T}_{\tau'} \;
\forall \tau \in \mathbb{T}_{\mathcal{I}}, \tau' \in {\rm sons}(\tau).
$
If the matrices $\mathbf{U}_{\tau}$ are orthogonal for all $\tau \in \mathbb{T}_{\mathcal{I}}$, 
$(\mathbf{U}_{\tau})_{\tau \in \mathbb{T}_{\mathcal{I}}}$ is said to be a 
\rm{nested orthogonal cluster basis}.
\end{definition}

\begin{definition}[Descendants, predecessors, block row]
The {\rm set of descendants} of $\tau \in \mathbb{T}_{\mathcal{I}}$ is recursively defined by 
\begin{equation*}
\operatorname{sons}^*(\tau) := \left\{
\begin{array}{l}
 \{\tau\} \hspace{40mm}\,\textrm{if}\hspace{2mm} \operatorname{sons}(\tau) = \emptyset, \\
 \{\tau\}\cup \bigcup_{\tau'\in\operatorname{sons}(\tau)}\operatorname{sons}^*(\tau') \quad \textrm{otherwise}.
 \end{array}
 \right.
\end{equation*}
 The {\rm set of predecessors} is given by
\begin{equation*}
\rm{pred}(\tau) := \{\tau^+ \in \mathbb{T}_{\mathcal{I}}\; :\; \tau \in \rm{sons}^*(\tau^+)\}.
\end{equation*}
Further, the {\rm block row} is defined by
\begin{equation*}
\rm{row}^*(\tau) := \{\sigma \in \mathbb{T}_{\mathcal{I}}\; :\; \exists \tau^+ \in 
\rm{pred}(\tau) : (\tau^+,\sigma) \in P_{\rm far} \}. 
\end{equation*}
\end{definition}

\begin{definition}[$\mathcal{H}^2$-matrix]
Let the partition $P$ of $\mathcal{I}\times \mathcal{I}$ be based on 
the cluster tree $\mathbb{T}_{\mathcal{I}}$ and $\eta > 0$. 
Let $\left(\mathbf{S_{\tau}}\right)_{\tau \in\mathbb{T}_{\mathcal{I}}}$ and $\left(\mathbf{U_{\sigma}}\right)_{\sigma \in\mathbb{T}_{\mathcal{I}}}$
be nested cluster bases. A matrix $\mathbf{W}_{\mathcal{H}^2}$ is said to be an {\rm $\mathcal{H}^2$-matrix}, if 
for each $\eta$-admissible cluster pair $(\tau,\sigma) \in P_{\rm far}$, there is a coupling matrix 
$\mathbf{M}_{\tau\sigma} \in \mathbb{R}^{r\times r}$ such that  
$\mathbf{W}_{\mathcal{H}^2}|_{\tau\times \sigma} = \mathbf{S}_{\tau}\mathbf{M}_{\tau\sigma}\mathbf{U}_{\sigma}^T$.
\end{definition}

We refer to \cite{Boerm02} for the storage complexity of $\mathcal{O}(rN)$ for the family of transfer matrices 
$(\mathbf{T}_{\tau})_{\tau \in \mathbb{T}_{\mathcal{I}}}$ and coupling matrices $\left(\mathbf{M}_{\tau\sigma}\right)_{(\tau,\sigma) \in P_{\rm far}}$.
Since the cluster basis only needs to be stored for the leaf clusters, we get a storage requirement of $\mathcal{O}(rN)$
for the cluster bases $\left(\mathbf{S_{\tau}}\right)_{\tau \in\mathbb{T}_{\mathcal{I}}}$ 
and $\left(\mathbf{U_{\tau}}\right)_{\tau \in\mathbb{T}_{\mathcal{I}}}$,
and therefore a total storage requirement of $\mathcal{O}(rN)$ for $\mathcal{H}^2$-matrices.\\

The following theorem shows that the matrix $\mathbf{V}^{-1}$
can be approximated by an $\mathcal{H}^2$-matrix and that the error converges exponentially in the block rank.

\begin{theorem}\label{th:H2approx}
Fix the admissibility parameter $\eta>0$. Let a partition $P$ of $\mathcal{I}\times \mathcal{I}$ be based on a cluster tree 
$\mathbb{T}_{\mathcal{I}}$.
Then, there is a blockwise rank-$r$ $\mathcal{H}^2$-matrix $\mathbf{W}_{\mathcal{H}^2}$ such that
\begin{equation*}
\norm{\mathbf{V}^{-1} - \mathbf{W}_{\mathcal{H}^2}}_2 \leq C_{\rm H2} N^{(d+2)/(d-1)}{\rm depth}(\mathbb{T}_{\mathcal{I}})
\sqrt{\abs{\mathbb{T}_{\mathcal{I}}}} e^{-br^{1/(d+1)}}. 
\end{equation*}
The constant $C_{\rm H2}>0$ depends only on $\Omega,d$, 
and the $\gamma$-shape regularity of the quasiuniform triangulation $\mathcal{T}_h$, where the 
constant $b>0$ additionally depends on $\eta$.
\end{theorem}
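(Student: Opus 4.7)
The plan is to promote the blockwise rank-$r$ approximation of Theorem~\ref{th:blockapprox} to an $\mathcal{H}^2$-matrix with nested cluster bases, following the template of \cite{Boerm}. The decisive observation I would exploit is that the finite-dimensional subspace $W_k$ produced by Proposition~\ref{thm:function-approximation}, and hence the row-approximation subspace $\mathcal{W}_\tau := \Lambda_\tau W_k(\tau) \subset \mathbb{R}^{\abs{\tau}}$ built in the proof of Theorem~\ref{th:blockapprox}, depends only on the cluster $\tau$ (through its bounding box $B_{R_\tau}$, the admissibility parameter $\eta$, and $k$) and is independent of the admissibility partner $\sigma$. A single subspace of dimension at most $r$ therefore simultaneously approximates $\mathbf{V}^{-1}|_{\tau\times\sigma}\mathbf{b}|_\sigma$ for every $\sigma$ admissible with $\tau$ and every $\mathbf{b}\in\mathbb{R}^N$.

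To enforce the nested structure demanded by the $\mathcal{H}^2$-format, I would inflate the family $(\mathcal{W}_\tau)$ to
\begin{equation*}
V_\tau := \operatorname{span}\bigl\{\mathcal{W}_{\tau^+}|_\tau : \tau^+ \in \operatorname{pred}(\tau) \cup \{\tau\}\bigr\}.
\end{equation*}
For any son $\tau'$ of $\tau$, the inclusion $\operatorname{pred}(\tau) \cup \{\tau\} \subset \operatorname{pred}(\tau')$ gives $V_\tau|_{\tau'} \subset V_{\tau'}$, so an orthonormal basis matrix $\mathbf{S}_\tau$ of $V_\tau$ supplies nested transfer matrices $\mathbf{T}_{\tau'}$ with $\mathbf{S}_\tau|_{\tau'} = \mathbf{S}_{\tau'}\mathbf{T}_{\tau'}$. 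The symmetry $\mathbf{V} = \mathbf{V}^T$ allows the same construction on the column side, yielding a nested orthogonal cluster basis $(\mathbf{U}_\sigma)$. I would then set $\mathbf{W}_{\mathcal{H}^2}|_{\tau\times\sigma} := \mathbf{S}_\tau \mathbf{M}_{\tau\sigma} \mathbf{U}_\sigma^T$ with $\mathbf{M}_{\tau\sigma} := \mathbf{S}_\tau^T \mathbf{V}^{-1}|_{\tau\times\sigma}\mathbf{U}_\sigma$ on admissible blocks and retain exact entries on inadmissible ones. Because $\mathcal{W}_\tau \subset V_\tau$, the one-sided projection $\mathbf{S}_\tau\mathbf{S}_\tau^T$ already preserves the blockwise approximant of Theorem~\ref{th:blockapprox}, and a two-sided triangle inequality then delivers the blockwise spectral bound $\norm{(\mathbf{V}^{-1}-\mathbf{W}_{\mathcal{H}^2})|_{\tau\times\sigma}}_2 \lesssim N^{(d+2)/(d-1)} q^k$.

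Converting the blockwise bound to a global one via Lemma~\ref{lem:spectralnorm} introduces the $C_{\rm sp}\operatorname{depth}(\mathbb{T}_\mathcal{I})$ factor, and passing from $q^k$ to $e^{-br^{1/(d+1)}}$ through $r \simeq k^{d+1}$ produces the exponential decay rate. The hard part will be the rank bookkeeping: the nested cluster basis $V_\tau$ aggregates contributions from every predecessor, so its dimension and the per-admissible-block rank of Theorem~\ref{th:blockapprox} differ by factors depending on $C_{\rm sp}$ and $\operatorname{depth}(\mathbb{T}_\mathcal{I})$. Reducing the per-block rank used in Theorem~\ref{th:blockapprox} so that $\dim V_\tau \leq r$, and then absorbing the compensating $\sqrt{\abs{\mathbb{T}_\mathcal{I}}}$ into the multiplicative prefactor, will produce the $\operatorname{depth}(\mathbb{T}_\mathcal{I})\sqrt{\abs{\mathbb{T}_\mathcal{I}}}$ prefactor in the stated bound.
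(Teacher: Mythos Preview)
Your approach differs from the paper's in how nestedness is obtained. The paper does not build nested bases by predecessor-union. Instead it defines the total block row $M_\tau := \bigcup\{\sigma : \sigma \in \operatorname{row}^*(\tau)\}$, observes that $\operatorname{diam}(B_{R_\tau}) \leq \eta\,\operatorname{dist}(B_{R_\tau},B_{R_\sigma})$ for every $\sigma \in \operatorname{row}^*(\tau)$ (since $B_{R_\tau}\subset B_{R_{\tau^+}}$ for the relevant predecessor $\tau^+$), and applies Theorem~\ref{th:blockapprox} directly to the wide block $\mathbf{V}^{-1}|_{\tau\times M_\tau}$ to obtain a rank-$r$ approximation for each $\tau$. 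Nestedness is then imported as a black box from \cite[Corollary~6.18]{BoermBuch}: that result converts the family of rank-$r$ row-block approximations into a nested orthogonal cluster basis $(\mathbf{U}_\tau)$ of the \emph{same} rank $r$, with the one-sided projection error on each admissible block bounded by the $\ell^2$-sum of the row-block errors over all of $\mathbb{T}_{\mathcal{I}}$. It is precisely this sum of $\abs{\mathbb{T}_{\mathcal{I}}}$ squared terms that produces the factor $\sqrt{\abs{\mathbb{T}_{\mathcal{I}}}}$; the two-sided estimate and Lemma~\ref{lem:spectralnorm} then give the $\operatorname{depth}(\mathbb{T}_{\mathcal{I}})$ factor exactly as you describe.

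Your explicit predecessor-union is a legitimate alternative, and your observation that $W_k$ in Proposition~\ref{thm:function-approximation} depends only on $\tau$ is correct and is in effect what the paper exploits when approximating the whole row $\mathbf{V}^{-1}|_{\tau\times M_\tau}$ at once. However, your final bookkeeping does not deliver the stated bound. The union over predecessors gives $\dim V_\tau \leq (\operatorname{level}(\tau)+1)\, r_0 \leq \operatorname{depth}(\mathbb{T}_{\mathcal{I}})\, r_0$, so enforcing $\dim V_\tau \leq r$ forces $r_0 \sim r/\operatorname{depth}(\mathbb{T}_{\mathcal{I}})$ and hence $q^k \sim \exp\bigl(-b(r/\operatorname{depth}(\mathbb{T}_{\mathcal{I}}))^{1/(d+1)}\bigr)$. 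This places a depth factor \emph{inside} the exponential; nothing in your construction produces $\sqrt{\abs{\mathbb{T}_{\mathcal{I}}}}$ as a multiplicative prefactor, and the sentence in which you claim to absorb that factor is not justified. Your route would prove a variant of the form $C\,N^{(d+2)/(d-1)}\operatorname{depth}(\mathbb{T}_{\mathcal{I}})\, \exp\bigl(-b(r/\operatorname{depth}(\mathbb{T}_{\mathcal{I}}))^{1/(d+1)}\bigr)$, which is a genuine $\mathcal{H}^2$-approximation result but not the inequality of Theorem~\ref{th:H2approx} as stated.
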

\begin{proof}
We define the total cluster basis $M_{\tau} := \bigcup\{\sigma\;:\;\sigma \in \rm{row}^*(\tau)\}$. 
By definition of $\rm{row}^*(\tau)$, there exists a cluster $\tau^+ \in \rm{pred}(\tau)$ 
such that $(\tau^+,\sigma) \in P_{\rm far}$ for all $\sigma \in \rm{row}^*(\tau)$.
Let $B_{R_{\tau}},B_{R_{\tau^+}},B_{R_{\sigma}}$ be bounding boxes for the clusters $\tau,\tau^+,\sigma$,
and we assume that $B_{R_{\tau}} \subset B_{R_{\tau^+}}$ for $\tau \in \operatorname{sons}^*(\tau^+)$.
Then, we have
\begin{align*}
\operatorname*{diam}(B_{R_{\tau}}) &\leq \operatorname*{diam}(B_{R_{\tau^+}}) \leq 
\eta \operatorname{dist}(B_{R_{\tau^+}},B_{R_{\sigma}}) \\
&\leq \eta \operatorname{dist}(B_{R_{\tau}},B_{R_{\sigma}}) \qquad \forall \sigma \in \rm{row}^*(\tau). 
\end{align*}
Therefore, Theorem \ref{th:blockapprox} can be used to derive a low-rank approximation 
of the matrix block $\mathbf{V}^{-1}|_{\tau\times M_{\tau}}$, i.e., it provides matrices 
$\mathbf{X}_{\tau M_{\tau}}\in\mathbb{R}^{\abs{\tau}\times r}$,$\mathbf{Y}_{\tau M_{\tau}} \in \mathbb{R}^{\abs{M_{\tau}}\times r}$
with $r = C_{\rm dim}(2+\eta)^dq^{-d}k^{d+1}$, such that
\begin{equation*}
\norm{\mathbf{V}^{-1}|_{\tau\times M_{\tau}}-\mathbf{X}_{\tau M_{\tau}}\mathbf{Y}_{\tau M_{\tau}}^T}_2 
\lesssim N^{(d+2)/(d-1)} q^k \simeq N^{(d+2)/(d-1)}e^{-br^{1/(d+1)}},
\end{equation*}
where the constant $b>0$ depends only on $\Omega,d$, the $\gamma$-shape regularity of 
the quasiuniform triangulation $\mathcal{T}_h$ and $\eta$.
Then, \cite[Corollary 6.18]{BoermBuch} states that there exists a nested orthogonal cluster basis 
$(\mathbf{U}_{\tau})_{\tau \in \mathbb{T}_{\mathcal{I}}} \in \mathbb{R}^{\abs{\tau}\times r}$ of 
rank $r \leq C_{\rm dim}\left(2+\eta\right)^dq^{-d}k^{d+1}$  such that for each $(\tau,\sigma) \in P_{\rm far}$ we have
\begin{align*}
\norm{\mathbf{V}^{-1}|_{\tau\times\sigma}-\mathbf{U}_{\tau}\mathbf{U}_{\tau}^T\mathbf{V}^{-1}|_{\tau\times\sigma}}_2^2 
&\lesssim 
\sum_{\tau \in \mathbb{T}_{\mathcal{I}}}\norm{\mathbf{V}^{-1}|_{\tau\times M_{\tau}}-
\mathbf{X}_{\tau M_{\tau}}\mathbf{Y}_{\tau M_{\tau}}^T}_2^2 \\
&\lesssim N^{(2d+4)/(d-1)}\abs{\mathbb{T}_{\mathcal{I}}}e^{-2br^{1/(d+1)}}.
\end{align*}
With $\mathbf{M}_{\tau\sigma} := \mathbf{U}_{\tau}^T\mathbf{V}^{-1}|_{\tau\times\sigma}\mathbf{U}_{\sigma}$, the matrix 
\begin{equation}\label{eq:Hsquare}
\mathbf{W}_{\mathcal{H}^2} = \left\{
\begin{array}{l}
 \mathbf{U}_{\tau}\mathbf{M}_{\tau\sigma}\mathbf{U}_{\sigma}^T, \quad\textrm{for}\,\, (\tau,\sigma) \in P_{\text{far}}, \\
 \mathbf{V}^{-1}|_{\tau \times \sigma}, \, \qquad \textrm{otherwise}
 \end{array}
 \right.
\end{equation}
is then the desired $\mathcal{H}^2$-matrix approximating $\mathbf{V}^{-1}$. The symmetry of $\mathbf{V}^{-1}$, 
i.e. $\mathbf{V}^{-1} = \mathbf{V}^{-T}$, and $\norm{\mathbf{A}^T}_2 = \norm{\mathbf{A}}_2$ 
for $\mathbf{A} \in \mathbb{R}^{\abs{\tau}\times \abs{\sigma}}$ imply
\begin{align*}
\norm{\mathbf{V}^{-1}|_{\tau\times\sigma}-\mathbf{W}_{\mathcal{H}^2}|_{\tau\times\sigma}}_2 &\leq
\norm{\mathbf{V}^{-1}|_{\tau\times\sigma}-\mathbf{U}_{\tau}\mathbf{U}_{\tau}^T 
\mathbf{V}^{-1}|_{\tau\times\sigma}}_2 \\ 
& \phantom{\leq}\, + \norm{\mathbf{U}_{\tau}\mathbf{U}_{\tau}^T(\mathbf{V}^{-1}|_{\tau\times\sigma}-
\mathbf{V}^{-1}|_{\tau\times\sigma}\mathbf{U}_{\sigma}\mathbf{U}_{\sigma}^T)}_2 \\
&\leq\norm{\mathbf{V}^{-1}|_{\tau\times\sigma}-\mathbf{U}_{\tau}\mathbf{U}_{\tau}^T 
\mathbf{V}^{-1}|_{\tau\times\sigma}}_2 \\
& \phantom{\leq}\, +
 \norm{\mathbf{V}^{-1}|_{\sigma\times\tau}-\mathbf{U}_{\sigma}\mathbf{U}_{\sigma}^T 
\mathbf{V}^{-1}|_{\sigma \times\tau}}_2 \\
&\lesssim N^{(d+2)/(d-1)}\sqrt{\abs{\mathbb{T}_{\mathcal{I}}}}e^{-br^{1/(d+1)}}.
\end{align*}
Finally, Lemma~\ref{lem:spectralnorm} finishes the proof.
\end{proof}

For a quasiuniform mesh and typical clustering strategies, there holds $\abs{\mathbb{T}_{\mathcal{I}}} \simeq N$ 
and $\operatorname{depth}(\mathbb{T}_{\mathcal{I}}) \simeq \log N$.
Therefore, the rank $r$ of the $\mathcal{H}^2$-matrix needed to obtain an accuracy 
$\varepsilon$ in Theorem~\ref{th:H2approx}
is given by $r \simeq \abs{\ln\varepsilon-(\frac{d+2}{d-1}+\frac{1}{2})\log N - \log\log N}^{d+1} 
\simeq \left(C_1+C_2\log N\right)^{d+1}$. 
In comparison, for the $\mathcal{H}$-matrix approximation in Theorem~\ref{th:Happrox} a smaller rank of
$r \!\simeq\! \abs{\ln\varepsilon-\frac{d+2}{d-1}\log N - \log\log N}^{d+1}\! \simeq\! \left(C_1+C_3\log N\right)^{d+1}$
is sufficient. However, since the storage requirement for $\mathcal{H}^2$-matrices is given by $\mathcal{O}(rN)$ in comparison
to the storage requirement for $\mathcal{H}$-matrices of $\mathcal{O}(rN\log N)$, we observe that in terms of storage,
the $\mathcal{H}^2$-approximation leads to better results.

We refer to \cite{Boerm} for numerical examples concerning this comparison for FEM matrices.

\section{Numerical Examples}
In this section, we present some numerical examples in two and three dimensions to illustrate
our theoretical estimates derived in the previous sections.
Further numerical examples for the $\mathcal{H}$-matrix approximation of inverse BEM matrices and black-box
preconditioning with an $\mathcal{H}$-LU decomposition can be found in 
\cite{GrasedyckDissertation,Bebendorf05,Grasedyck05,BoermBuch}.

With the choice $\eta=2$ for the admissibility parameter in \eqref{eq:admissibility},
the clustering is done by the standard geometric clustering algorithm,
i.e., by choosing axis parallel bounding boxes of minimal volume and
splitting these bounding boxes in half across the largest face 
until they are admissible or contain less degrees of freedom than $n_{\text{leaf}}$,
which we choose as $n_{\text{leaf}} = 25$ for our computations.
An approximation $\mathbf{W}_{\mathcal{H}}$ to the inverse Galerkin matrix $\mathbf{V}^{-1}$ is computed by 
using a truncated singular value decomposition of the exact inverse.
Throughout, we use the C-library HLib \cite{HLib}.\\ 

\subsection{2D-Example}

We consider the L-shaped domain 
$\Omega = (0,1)\times(0,\frac{1}{2}) \cup (0,\frac{1}{2})\times[\frac{1}{2},1)$ 
and the lowest-order discretization of the simple-layer potential 
\begin{equation*}
\mathbf{V}_{jk} = \skp{V\chi_k,\chi_j}
\end{equation*}
from \eqref{eq:Galerkin}.

In Figure \ref{fig:2DBEM}, we compare the decrease of the upper bound 
$\norm{\mathbf{I}-\mathbf{V}\mathbf{W}_{\mathcal{H}}}_2$ of the relative error with the increase 
in the block-rank for a fixed number $N = 16,384$ of degrees of freedom, where 
the largest block of $\mathbf{W}_{\mathcal{H}}$ has a size of $2,048^2$. Moreover,
Table \ref{tab:2DBEM} shows the storage requirement for the computed $\mathcal{H}$-matrix approximation 
and the percentage of memory needed compared to the full representation.
As theoretically expected, we observe a linear growth in the rank $r$ for the storage requirements.
Moreover, we remark that $\norm{\mathbf{W}_{\mathcal{H}}}_2 = 1.22\cdot10^{8}$.

\begin{figure}[hbt]
\begin{minipage}{.50\linewidth}
\centering
\psfrag{Error}[c][c]{%
 \footnotesize  Error}
\psfrag{Block rank r }[c][c]{%
 \footnotesize  Block rank r}
\psfrag{asd}[l][l]{\footnotesize $\exp(-2.4\, r)$}
\psfrag{jkl}[l][l]{\footnotesize $\norm{I-VW_{\mathcal{H}}}_2$}
\includegraphics[width=0.90\textwidth]{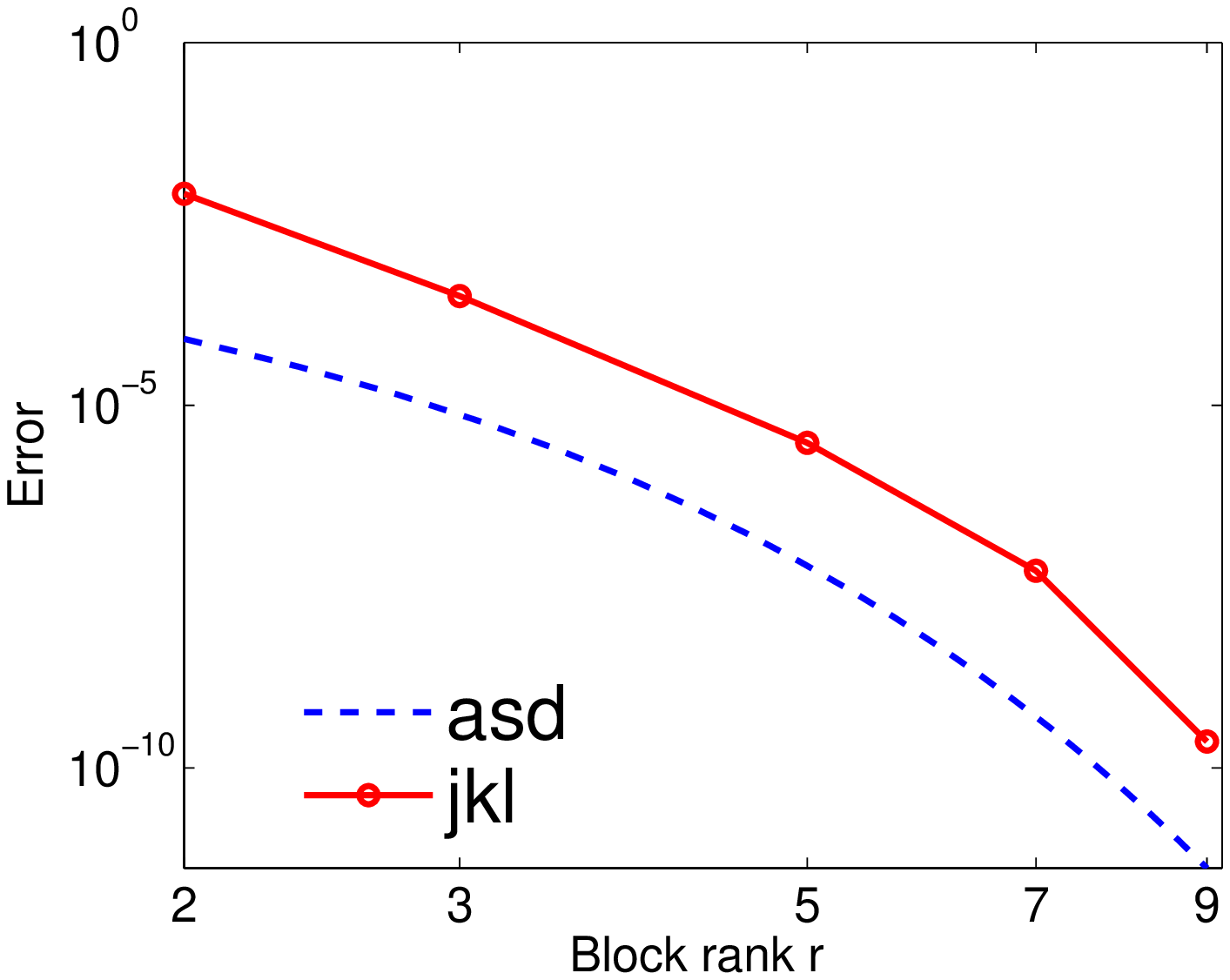}
\caption{\footnotesize Exponential convergence in block rank}
\label{fig:2DBEM}
\end{minipage}
\begin{minipage}{.49\linewidth}
\vspace{10mm}
\centering
\begin{tabular}{|c|c|c|}
\hline & Storage & Compressed \\
 $r$ & $\mathbf{W}_{\mathcal{H}}$ (MB) & to (\%)\\ 
\hline 2 & 24.5 & 1.2 \\ 
\hline 3 & 32.6 & 1.6 \\ 
\hline 5 & 49.0 & 2.4 \\ 
\hline 7 & 65.3 & 3.2 \\ 
\hline 9 & 81.6 & 4.0 \\ 
\hline
\end{tabular}
\vspace{6mm}
\captionof{table}{\footnotesize Storage (MB) for $\mathbf{W}_{\mathcal{H}}$}
\label{tab:2DBEM} 
\end{minipage}
\end{figure}

We observe exponential convergence in the block rank, where the convergence rate 
is $\exp(-br)$, which is even faster than the rate of $\exp(-br^{1/3})$ guaranteed by Theorem~\ref{th:Happrox}.

\subsection{3D-Example}

We consider the
crankshaft generated by NETGEN \cite{netgen} visualized in Figure~\ref{fig:crankshaft}.

\begin{figure}[h]
\begin{center}
\includegraphics[width=0.35\textwidth]{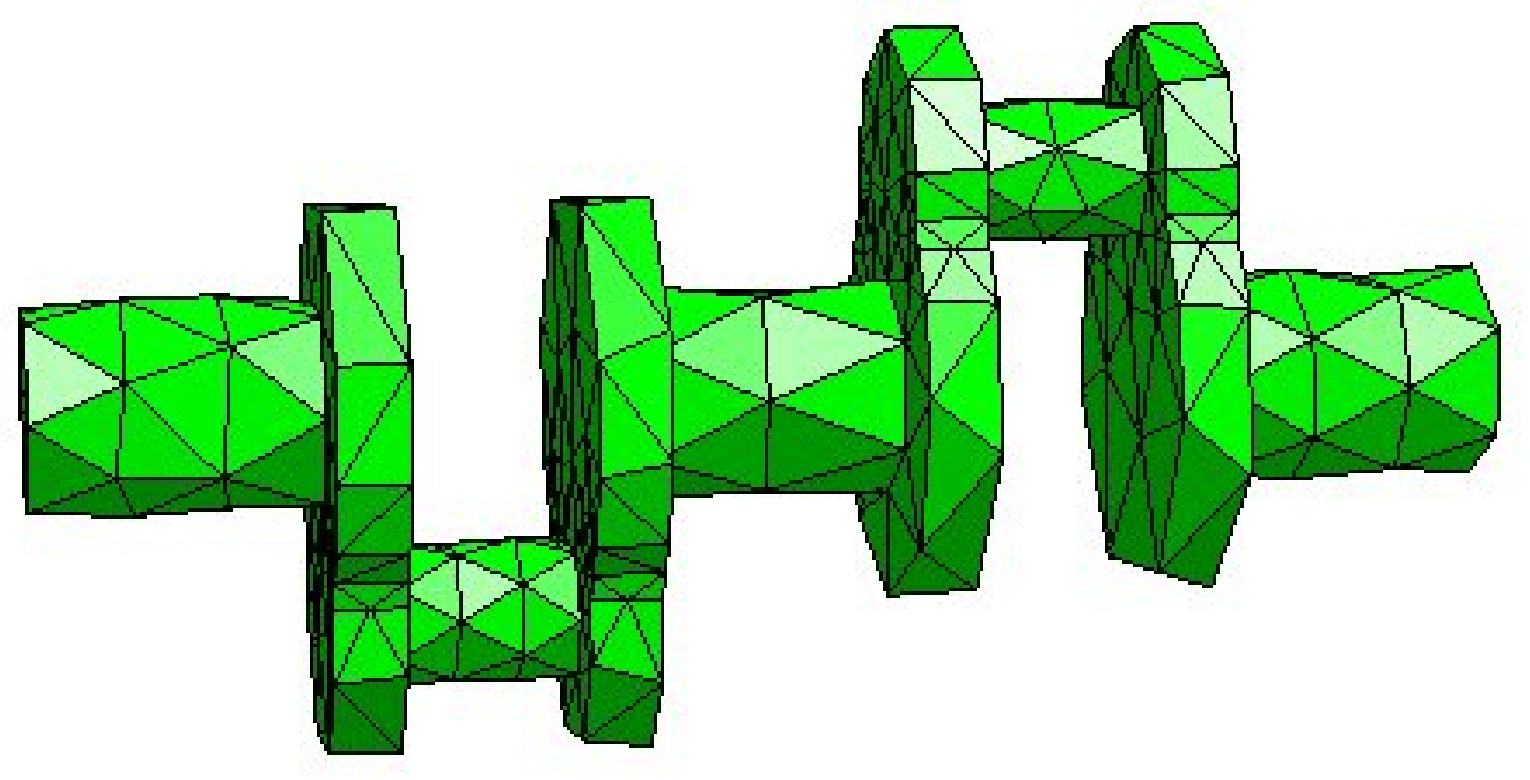}
\caption{\footnotesize Crankshaft domain}
\label{fig:crankshaft}
\end{center}
\end{figure}

In Figure \ref{fig:3DBEM}, we  compare the decrease of 
$\norm{\mathbf{I}-\mathbf{V}\mathbf{W}_{\mathcal{H}}}_2$ with the increase 
in the block-rank for a fixed number $N = 27,968$ of degrees of freedom, where 
the largest block of $\mathbf{W}_{\mathcal{H}}$ has a size of $3,496^2$. Table \ref{tab:3DBEM} 
shows the storage requirement for the matrix $\mathbf{W}_{\mathcal{H}}$ and the compression rates.
We mention that $\norm{\mathbf{W}_{\mathcal{H}}}_2 = 41.3$.

\begin{figure}[hbt]
\begin{minipage}{.50\linewidth}
\centering
\psfrag{Error}[c][c]{%
 \footnotesize  Error}
\psfrag{Block rank r }[c][c]{%
 \footnotesize  Block rank r}
\psfrag{asd}[l][l]{\footnotesize $\exp(-2.5\, r^{1/2})$}
\psfrag{jkl}[l][l]{\footnotesize $\norm{I-VW_{\mathcal{H}}}_2$}
\includegraphics[width=0.90\textwidth]{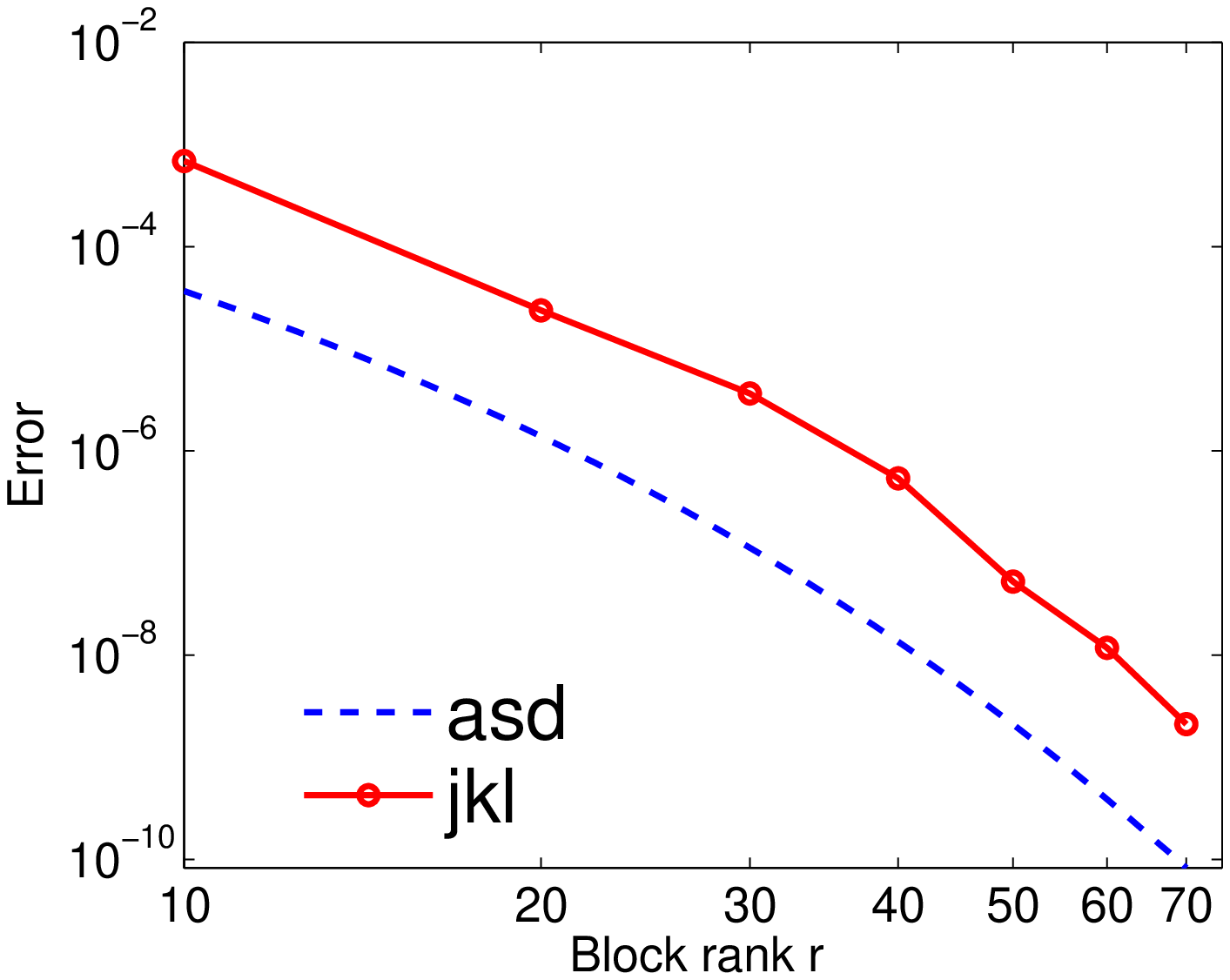}
\caption{\footnotesize Exponential convergence in block rank}
\label{fig:3DBEM}
\end{minipage}
\begin{minipage}{.49\linewidth}
\vspace{6mm}
\centering
\begin{tabular}{|c|c|c|}
\hline & Storage & Compressed \\
 $r$ & $\mathbf{W}_{\mathcal{H}}$ (MB) & to (\%)\\ 
\hline 10 & 1298 & 21.8 \\ 
\hline 20 & 1465 & 24.6 \\ 
\hline 30 & 1633 & 27.4 \\ 
\hline 40 & 1800 & 30.2 \\ 
\hline 50 & 1968 & 33.0 \\ 
\hline 60 & 2135 & 35.8 \\ 
\hline 70 & 2303 & 38.6 \\ 
\hline\end{tabular}
\vspace{2mm}
\captionof{table}{\footnotesize Storage (MB) for $\mathbf{W}_{\mathcal{H}}$
\label{tab:3DBEM} }
\end{minipage}
\end{figure}

Comparing the results with our theoretical bound from Theorem~\ref{th:Happrox},
 we empirically observe a rate of $\exp(-br^{1/2})$ instead
of $\exp(-br^{1/4})$.

\bibliography{bibliography_1}{}
\bibliographystyle{amsalpha}

\end{document}